\newcommand{\C}{\mathbf{C}}
\newcommand{\LJ}{\mathbf{LJ}}
\newcommand{\DS}{\mathrm{DS}}
\newcommand{\scc}{\mathscr{C}}
\newcommand{\scd}{\mathscr{D}}
\newcommand{\scr}{\mathscr{R}}
\newcommand{\Irr}{\mathrm{Irr}}
\newcommand{\Speh}{\mathrm{Speh}}
\newcommand{\bC}{\mathbb C}
\newcommand{\bn}{\mathbb N}
\newcommand{\bR}{\mathbb R}
\newcommand{\ba}{\mathbb A}
\newcommand{\ra}{\rangle}
\newcommand{\bs}{\backslash}
\newcommand{\al}{\alpha}
\DeclareMathOperator{\ind}{ind}
\DeclareMathOperator{\GL}{GL}
\DeclareMathOperator{\Hom}{Hom}
\DeclareMathOperator{\Ind}{Ind}
\DeclareMathOperator{\SL}{SL}
\newcommand{\fg}{\mathfrak g}
\newcommand{\sco}{\mathcal{O}}
\newcommand{\scu}{\mathcal{U}}
\newtheorem{Thm}{Theorem}[section]
\newtheorem{Prop}[Thm]{Proposition}
\newtheorem{Lem}[Thm]{Lemma}
\newtheorem{Cor}[Thm]{Corollary}
\newtheorem{Hypo}[Thm]{Hypothesis}
\theoremstyle{definition}
\newtheorem{Def}[Thm]{Definition}
\theoremstyle{remark}
\newtheorem{Rem}[Thm]{Remark}
\theoremstyle{definition}
\title{Quaternionic Speh representations}
\author{Yuanqing Cai}
\address{Faculty of Mathematics and Physics, Institute of Science and Engineering, Kanazawa University, Kakumamachi, Kanazawa, Ishikawa, 920-1192, Japan}
\email{cai@se.kanazawa-u.ac.jp}
\subjclass[2010]{Primary 11F70; Secondary 22E50, 	22E55}
\keywords{Degenerate Whittaker coefficients, Jacquet-Langlands correspondence, Speh representations, unique models}
\begin{document}

\maketitle

\begin{abstract}
For a central division algebra $D$, we study a family of representations of $\GL_{k,D}$ (both locally and globally), which can be viewed as analogues of the Speh representations. Locally, we study unique models for these representations. Globally, we show that these representations support certain non-vanishing Fourier coefficients. 
\end{abstract}

\tableofcontents



\section{Introduction}

The uniqueness of Whittaker models is a fundamental result in the study of automorphic representations and has many important applications. For example, it leads to the proof of the functional equations of certain automorphic $L$-functions via the Langlands-Shahidi method and several Rankin-Selberg integrals. Unfortunately, this important property does not hold for non-quasi-split groups. As a result, it seems rather difficult to develop a theory of $L$-functions for these groups. The purpose of this paper is to study a family of representations with unique models of $\GL_{k,D}$ (both locally and globally) for a central division algebra $D$ over a local or global field $F$. 

We first start with a simple example to get some basic ideas of this problem. Let $D$ be a central division algebra over a local field $F$ of dimension $d^2$. Then the only nilpotent orbit of the group $D^{\times}$ is the trivial orbit. As a consequence, only one-dimensional representations of $D^{\times}$ have unique models and most of the representations do not support unique models.

Similarly, for $\GL_{k,D}$, it is not difficult to see that most representations do not have unique models, either. In this article, we would like to search for representations of $\GL_{k,D}$ with unique models. 

A natural method of constructing representations of $\GL_{k,D}$ is the Jacquet-Langlands correspondence. The Jacquet-Langlands correspondence (as in \cite{DKV84}) for discrete series representations says that there is a bijection between discrete series representations of $\GL_{k,D}$ and $\GL_{kd}$ satisfying a character identity. This correspondence was later extended in \cite{Badulescu08, BR10} to allow unitary representations as well. For our purpose, we would like to take the latter version for the following two reasons. First, it does produce more representations than the discrete series version. Second, it is also local-to-global compatible and will be more suitable if we are aiming for some global applications. 

For an admissible representation $\pi$ of a $p$-adic group, a result of M{\oe}glin--Waldspurger \cite{MW89} says that the dimension of a certain degenerate Whittaker models for $\pi$ is related to the leading coefficients of the local character expansion of $\pi$. As a result, one hope that the Jacquet-Langlands transfer of appropriate representations of $\GL_{kd}$ (with suitable unique degenerate Whittaker models) might give some desired representations of $\GL_{k,D}$ as the Jacquet-Langlands transfer satisfies a character identity. At least in the non-Archimedean case, this idea works for the Speh representations of $\GL_{kd}$. 

The Speh representations were first studied in the real case and later this construction was also extended to the $p$-adic case. Locally, we view the construction of the Speh representations as the following: 
\[
\tau \mapsto \Speh(\tau,n)
\]
where $\tau$ is an irreducible generic unitary representation of $\GL_{k}$ and $\Speh(\tau,n)$ is the ``smallest'' piece of a highly reducible induced representation defined using $\tau$. A key property of the Speh representations is that the maximal orbit that supports degenerate Whittaker models is $(k^n)$. Moreover, the dimension of such degenerate Whittaker models is $1$. 

Note that if $n=dn'$ for some $n'$, then the nilpotent orbit $(k^n)$ is an orbit that ``comes from'' $\GL_{kn',D}$. Our attempt is to define 
\[
\Speh_D(\tau,n):=|\LJ|(\Speh(\tau,nd)). 
\]
Here $|\LJ|$ is the Jacquet-Langlands transfer for unitary representations in \cite{Badulescu08,BR10}.  This definition works both locally and globally. 

Our main theorem is the following. 

\begin{Thm}\label{thm:main}
We have the following: 
\begin{enumerate}
\item (Local vanishing result) For any nilpotent orbit $\mathcal{O}$ greater than or not comparable to $(k^n)_D$, the representation does not support degenerate Whittaker models for $\mathcal{O}$.  \item (Local multiplicity one result) The Archimedean case is based on certain natural hypothesis to be verified. Then the dimension 
\[
\dim \Hom_{N_{(k^n)_D}}(\Speh_D(\tau,n),\psi_{(k^n)_D})\leq 1.
\]
Here, the pair $(N_{(k^n)_D}, \psi_{(k^n)_D})$ is the unipotent subgroup and character used in the definition of the degenerate Whittaker models for the nilpotent orbit $(k^n)_D$. Moreover, in the non-Archimedean case, this dimension is exactly $1$. 
\item (Global result)
The maximal nilpotent orbit that supports nonzero global degenerate Whittaker coefficient $\Speh_D(\tau,n)$ is $(k^n)_D$. 
\end{enumerate}
\end{Thm}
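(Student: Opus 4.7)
The strategy is to transfer information across the extended Jacquet--Langlands correspondence using Harish-Chandra's local character expansion together with the M{\oe}glin--Waldspurger theorem, which identifies degenerate Whittaker model dimensions with the leading coefficients of that expansion. As a preliminary, I would recall that nilpotent orbits in $\fgl_{kn,D}$, parametrized by partitions of $kn$, embed into the orbits of $\fgl_{knd}$ via the map $\lambda\mapsto\lambda^{(d)}$ that repeats each part of $\lambda$ exactly $d$ times; the image consists of partitions whose part-multiplicities are all divisible by $d$, this map is strictly order-preserving in both directions for the dominance order (since the partial sums of $\lambda^{(d)}$ at positions $jd$ are just $d$ times the partial sums of $\lambda$), and $(k^n)_D\mapsto (k^{nd})$.

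For parts (1) and (2), the plan is to combine the Badulescu--Renard character identity between $\Speh_D(\tau,n)$ and $\Speh(\tau,nd)$ on matching regular semisimple classes with the parallel matching of Fourier transforms of nilpotent orbital integrals, $\hat\mu_{\mathcal{O}}\leftrightarrow \hat\mu_{\mathcal{O}^{(d)}}$ up to an explicit positive constant. Passing to the limit near the identity of the two characters, this yields an expansion
\[
\chi_{\Speh_D(\tau,n)}\;=\;\sum_{\mathcal{O}} c_{\mathcal{O}}\,\hat\mu_{\mathcal{O}},\qquad c_{\mathcal{O}}\;=\;\epsilon_\mathcal{O}\cdot c_{\mathcal{O}^{(d)}}(\Speh(\tau,nd)),
\]
with each $\epsilon_\mathcal{O}>0$. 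Since the wavefront set of $\Speh(\tau,nd)$ is $\{(k^{nd})\}$ with multiplicity one, and since an orbit $\mathcal{O}$ is greater than or incomparable to $(k^n)_D$ exactly when $\mathcal{O}^{(d)}$ is greater than or incomparable to $(k^{nd})$, part (1) follows. M{\oe}glin--Waldspurger then reads off the exact equality $\dim\Hom_{N_{(k^n)_D}}(\Speh_D(\tau,n),\psi_{(k^n)_D})=1$ in the non-Archimedean case of (2); the Archimedean upper bound in (2) is the same argument run under the stated hypothesis relating leading character coefficients to generalized Whittaker model dimensions.

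For the global statement (3), vanishing above $(k^n)_D$ follows immediately from (1) at any single place, since a global Fourier coefficient vanishes whenever its local avatar vanishes somewhere. The remaining point is the nonvanishing of the $(k^n)_D$-degenerate Whittaker coefficient of $\Speh_D(\tau,n)$ itself. My plan is to unfold this global coefficient through the Levi structure attached to $(k^n)_D$ and express it in terms of a period of $\tau$ on copies of $\GL_{k,D}$, then compare with the parallel unfolding of the classical $(k^{nd})$-coefficient of $\Speh(\tau,nd)$, which is known to be nonvanishing; the compatibility of the two unfoldings should be controlled by the global Jacquet--Langlands transfer and the local multiplicity-one result from (2).

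The main obstacle I foresee is precisely this last compatibility. Because $\GL_{kn,D}$ and $\GL_{knd}$ are geometrically distinct groups, the unfolding cannot be done by a purely formal matching and must instead be bootstrapped from the character identity. I intend to handle this by realizing $\Speh_D(\tau,n)$ as an iterated residue of an Eisenstein series on $\GL_{kn,D}$, parallel to the classical residual construction of $\Speh(\tau,nd)$, and then reducing the nonvanishing to the evaluation of the unfolded integral against a single carefully chosen global test vector, whose nontriviality is guaranteed locally everywhere by (2).
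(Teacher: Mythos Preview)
Your treatment of the non-Archimedean local results is essentially the paper's approach: transfer the character expansion across the Jacquet--Langlands identity (this is Prasad's result, cited in the paper as Proposition~4.9) and invoke M{\oe}glin--Waldspurger. One point you gloss over is the assertion that $\epsilon_{\mathcal{O}}>0$. The character identity for $|\LJ|$ carries an a priori sign $\varepsilon_\pi\in\{\pm1\}$, and the paper has to compute it explicitly (Proposition~4.8) to show $\varepsilon_\pi=+1$ for $\Speh(\tau,nd)$. Without this, the coefficient $c_{(k^n)_D}$ could come out as $-1$, which would contradict its interpretation as a dimension; the vanishing part is insensitive to the sign, but the exact equality in (2) is not.

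For the Archimedean multiplicity one, your approach diverges substantially from the paper's, and the hypothesis you invoke is not the one the paper uses. You propose to run the same local character-expansion argument under an assumed Archimedean analogue of M{\oe}glin--Waldspurger. The paper instead uses a \emph{global} argument in the style of Kazhdan--Patterson: one realizes $\Speh_{\mathbb{H}}(\tau_\infty,n)$ as the Archimedean component of a global cuspidal $\Speh_D(\tau,1)$, and then uses the Fourier expansion together with strong approximation to deduce uniqueness at $v_1$ from uniqueness at all other places. The hypothesis the paper actually needs (Hypothesis~6.5) concerns the existence of a Kirillov-type embedding for $\GL_{k,\mathbb{H}}$, not a character/Whittaker comparison. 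Your proposed hypothesis is natural but is a different (and, as the paper notes, currently unknown) statement.

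For global nonvanishing, your plan has a genuine gap. You propose to unfold the $(k^n)_D$-coefficient of the residual Eisenstein series and conclude nonvanishing from local nonvanishing in (2). But unfolding a degenerate Whittaker coefficient of a residue does not produce a pure product of local integrals; it produces a global automorphic period on the inducing cuspidal datum, and local nontriviality of test vectors does not by itself force that period to be nonzero. The paper handles this in two steps: first, for $n=1$, it observes that the building block $\tau'=\mathbf{G}^{-1}(\Speh(\tau,s_{\tau,D}))$ is cuspidal on $\GL_{k',D}$ and hence $D$-generic by the Shalika--Piatetski-Shapiro Fourier expansion (Lemma~6.2); when $s_{\tau,D}<d$, one still needs a Kirillov-model/strong-approximation argument to pass from this to $D$-genericity of $\Speh_D(\tau,1)$. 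Second, the general $n$ is reduced to $n=1$ by an induction-in-stages computation of the constant term along the $(k^n)_D$-Levi (Lemma~6.6 and the surrounding argument). Your suggestion to ``compare with the parallel unfolding of $\Speh(\tau,nd)$'' does not survive the fact that the two automorphic forms live on different groups, as you yourself note; the paper never makes such a comparison and instead works entirely on the $\GL_{kn,D}$ side.
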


In the real case, the Speh representations originally refer to $\Speh_{\mathbb{R}}(\tau,2)$ where $\tau$ is a discrete series representation of $\GL_2(\mathbb{R})$. The representation $\Speh_{\mathbb{H}}(\tau,1)$ can be viewed as quaternionic analogues of the Speh representations. We call them \textit{quaternionic Speh representations}.

Let us now say a few words regarding the proofs. As we indicated above, the non-Archimedean case can be done using the character identity without too much trouble. The more difficult part is deal with the Archimedean and global theory. 

The Archimedean version of the result of M{\oe}glin-Waldspurger is not known at the moment. Partial results can be found in \cite{GS15, GGS17} and these allows us to settle the vanishing part. For the multiplicity one part, we use the following idea. First, the definition of Speh representations is given by induction, thus one first consider the case when $\tau$ is a discrete series. For this case, we use a method in \cite{KP84}. In this paper, Kazhdan--Patterson used a global method to prove that the local components of the theta representations at bad primes support unique Whittaker models if the same holds for unramified places. This method can be adapted to our case, under certain natural hypothesis on the Kirillov models for representations of $\GL_{k,D}$. This only treats representations that can be realized as local components of global representations, but should be sufficient for applications. 

Hang Xue suggested to us that in the minimal case, $\tau \mapsto \Speh_D(\tau,1)$ can be realized using the theta correspondence. Thus, in this particular case, the desired unique model is a consequence of a result by Gomez-Zhu \cite{GZ14}. We will explain this in Section \ref{sec:theta}. 

The global correspondence is proved using the method of the trace formula. As a result, it seems difficult to gain information regarding the Fourier coefficients. In \cite{KP84}, another method was used to show that theta representations in certain cases are globally generic. This is again adaptable to our case to prove a base case, and we prove the general case using an induction-in-stages argument. 

We end this introduction by saying a few words regarding some potential applications of our results. 
The first application is the twisted doubling integrals \cite{CFGK19}. The twisted doubling method is a generalization of the doubling method. It gives a family of Rankin-Selberg integrals that represents the tensor product $L$-function $L(s,\pi\times \tau)$ for $\pi$ of a classical group, and $\tau$ of a general linear group $\GL_k$. A key ingredient in the construction is the use of the generalized Speh representations, which can be viewed as
\[
\tau\in \mathrm{DS}_{\mathrm{cusp}}(\GL_k(\mathbb{A})) \mapsto \Speh(\tau,n) \in \mathrm{DS}(\GL_{kn}(\mathbb{A}))
\]
for every positive integer $n$. Here, $\mathrm{DS}(\GL_k(\mathbb{A})) $ denotes the set of discrete series representations of $\GL_{k}(\mathbb{A})$ and the subscript $\mathrm{cusp}$ indicated cuspidal representations. The unfolding argument and the Eulerian property rely on the fact that the representation $\Speh(\tau,n)$ is supported on a sufficiently small nilpotent orbit and admits unique models of degenerate type at every local place. To extend the twisted doubling integrals to the quaternion unitary groups (see \cite{Cai21}), we seek an analogous construction
\[
\tau\in \mathrm{DS}_{\mathrm{cusp}}(\GL_k(\mathbb{A})) \mapsto \Speh_D(\tau,n) \in \mathrm{DS}(\GL_{kn,D}(\mathbb{A}))
\]
for a positive integer $n$ and a central division algebra $D$ over $F$. We also have to prove analogous properties for these representations. This is what we are seeking in this paper.  

Another application is related to Lapid--Mao \cite{LM20}. In this paper, a local version of the Rankin-Selberg convolution of two Speh representations is given and several properties are studied. It is also mentioned that a global version is possible by convolving two global Speh representations modulo some regularization problem. We would like to suggest that, one can take convolution of two representations of the form $\Speh_D(\tau,1)$ and its local integrals will also be the ones studied by Lapid--Mao. It is possible for $\Speh_D(\tau,1)$ to be cuspidal so that no regularization is necessary. 

The rest of the paper is organized as follows. In Section \ref{sec:preliminaries} we recall some preliminary results. In particular, wereview the classification of unitary representations of $\GL_{k,D}$. We review the extended Jacquet-Langlands correspondence, following \cite{Badulescu08} and \cite{BR10} in Section \ref{sec:local JL}. In Section \ref{sec:quaternionic speh}, we define the Speh representations over central division algebras locally and study some properties. In particular, the non-Archimedean part of Theorem \ref{thm:main} is proved and Section \ref{sec:Arch I} treats the vanishing part in the Archimedean case. We start the global investigation in Section \ref{sec:global Speh}. Section \ref{sec:global non-vanishing} proves the global non-vanishing statement. Moreover, the uniqueness part in the Archimedean case is proved using global method in Section \ref{sec:Archimedean using global}. In Appendix \ref{app:Kirillov}, we prove a result related to Kirllov models for $\GL_{k,D}$ in the non-Archimedean case. 

\subsection*{Acknowledgement}
The author would like to thank Hengfei Lu, Hang Xue and Lei Zhang for helpful discussions. This work was supported by MEXT Leading Initiative for Excellent Young Researchers Grant Number JPMXS0320200394.

\section{Preliminaries}\label{sec:preliminaries}

Let $F$ be a local field of characteristic zero. Let $D$ be a central division algebra over $F$ of dimension $d^2$. For a positive integer $k$, set $G_k=\GL_k(F)$ and $G_k'=G_{k,D}=\GL_k(D)$. From now on we identify a smooth representation of finite length with its equivalence class, so we will consider two equivalent representations as being equal. 

We introduce the following notation:
\begin{itemize}
\item For an admissible representation $\pi$, we denote $\chi_{\pi}$ the function character of $\pi$. This is a map which is stable under conjugation and defined on the set of regular semisimple elements of $G_k$.  
\end{itemize}

For all positive integers $k$, we fix the following notations:
\begin{itemize}
\item $\Irr_{k,D}$: the set of irreducible smooth representations of $G_{k,D}$
\item $\scd_{k,D}$: the subset of essentially square integrable representations in $\Irr_{k,D}$
\item $\scc_{k,D}$: the subset of cuspidal representations in $\scd_{k,D}$
\item $\Irr_{k,D}^u$ (resp. $\scd_{k,D}^u$, $\scc_{k,D}^u$): the subset of unitary representations in $\Irr_{k,D}$ (resp. $\scd_{k,D}$, $\scc_{k,D}$)
\item $\Irr_{k,D}^{eu}$: the subset of essentially unitary representations in $\Irr_{k,D}$
\item $\scr_{k,D}$: the Grothendieck groups of admissible representations of finite length of $G_{k,D}$
\item $\nu=\nu_{k,D}$: the character of $G_{k,D}$, defined by the absolute value of the reduced norm of the determinant 
\item $\times$: the standard notation for normalized parabolic induction; see also \cite{BZ77}.
\end{itemize}

Moreover, all the induced representations are normalized. 



Let $\mathfrak{g}_{k,D}$ be the Lie algebra of $G_{k,D}$. The coadjoint nilpotent orbits in $\mathfrak{g}_{k,D}^{\ast}$ are classified by partitions of $k$. A typical orbit is denoted by $(k_1^{n_1} \cdots k_m^{n_m})_D$ with $k_1n_1 + \cdots + k_mn_m=k$.

When $D=F$, we usually drop the subscript in the above notations.  For example, $(1^k)$ is the trivial orbit in $\mathfrak{g}_{k}^{\ast}$. 

We introduce the following observation which may be useful when considering local-to-global questions. Let $D'=\mathrm{M}_n(D)$. Then $G_{k,D'}\simeq G_{kn,D}$. We can fix such an isomorphism and the nilpotent orbit $(k_1\cdots k_m)_{D'}$ corresponds to the nilpotent orbit $(k_1^n \cdots k_m^n)_{D}$. 

More generally, for nilpotent orbits $\mathcal{O}=(k_1^d\cdots k_m^d)$ of $\GL_{kd}$ and $\mathcal{O}'=(k_1\cdots k_m)_D$ of $\GL_{k,D}$, we say that they correspond to each other and write $\mathcal{O} \leftrightarrow \mathcal{O}'$. 

\subsection{Unitary dual}

In this section, we review the classification of the unitary dual of $G_{k,D}$ (which also includes the case of $G_k$). 

Let $\sigma \in \mathscr{D}_{l,D}$ (meaning: essentially square-integrable modulo center). Consider $\sigma\times \nu^{\alpha}\sigma$ with $\alpha>0$. There exists a smallest number $\alpha_0>0$ such that $\sigma\times \nu^{\alpha_0}\sigma$ is reducible. 

\begin{Def}
Let $\sigma\in\mathscr{D}_{l,D}$. Set $\nu_{\sigma}=\nu^{\alpha_0}$, where $\alpha_0$ is the smallest real number $\alpha>0$ such that $\sigma \times \nu^{\alpha}\sigma$ is reducible. 
\end{Def}

For $\sigma \in \mathscr{D}_{l,D}$ and a positive integer $n$, we define $u(\sigma,n)$ to be the Langlands quotient of 
\[
\nu_{\sigma}^{(n-1)/2} \sigma \times \nu_{\sigma}^{(n-3)/2} \sigma \times \cdots \times \nu_{\sigma}^{(1-n)/2}\sigma.
\]
The representation $u(\sigma,n)$ is an irreducible representation of $G_{ln,D}$. 

For $\sigma \in \mathscr{D}_{l,D}$, a positive integer $n$ and a real number $\alpha$, we denote by $\pi(u(\sigma,n),\al)$ the induced representation 
\[
\nu_{\sigma}^\al u(\sigma,n) \times \nu_{\sigma}^{-\al} u(\sigma,n).
\]
The representation $\pi(u(\sigma,n),\al)$ is also irreducible. 

The unitary dual of $G_{k,D}$ is given as follows. 

\begin{Thm}
Let $\scu_D$ be the set of all representations of type $u(\sigma,n)$ or $\pi(u(\sigma,n),\al)$ where $l,n$ range over all positive integers, $\sigma\in \scd_{l,D}^u$ and $\alpha\in (0,1/2)$. Then we have the following:
\begin{enumerate}
\item all the representations in $\scu_D$ are unitary;
\item any product of representations in $\scu_D$ is irreducible and unitary;
\item every irreducible unitary representation $\pi$ of $G_{k,D}$ is a product of representations in $\scu_{D}$.
\end{enumerate}
\end{Thm}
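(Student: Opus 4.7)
The plan is to follow the Tadić strategy as extended to inner forms by Badulescu and Renard. The argument has three essentially independent components: establish unitarity of the atoms in $\scu_D$, prove irreducibility of arbitrary products of atoms, and show exhaustion, i.e.\ every irreducible unitary representation appears this way.

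For unitarity I would run a complementary series deformation. Starting from $\sigma^{\times n}$ with $\sigma\in\scd_{l,D}^u$, which is irreducible unitary at the origin, consider the family
\[
I(t) = \nu_\sigma^{t(n-1)/2}\sigma \times \nu_\sigma^{t(n-3)/2}\sigma \times \cdots \times \nu_\sigma^{t(1-n)/2}\sigma,\qquad t\in[0,1].
\]
A Schur-type pairing produces a $t$-analytic invariant Hermitian form, positive definite at $t=0$; by the very definition of $\nu_\sigma$ the family remains irreducible on $[0,1)$ and first reduces at $t=1$, so the Langlands quotient $u(\sigma,n)$ inherits unitarity. The same deformation applied to $\nu_\sigma^\al u(\sigma,n)\times\nu_\sigma^{-\al}u(\sigma,n)$ for $\al\in[0,1/2)$ yields unitarity of $\pi(u(\sigma,n),\al)$, provided one knows that this whole segment is irreducible and that reducibility first arises at $\al=1/2$; this input is a Jacquet-module calculation combined with the known reducibility points on the square-integrable side.

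For irreducibility of a product of atoms I would argue inductively, reducing to checking the basic pairwise cases $u(\sigma,n)\times u(\sigma',n')$ together with their mixed variants involving the $\pi(u(\sigma,n),\al)$. Writing each factor as a Langlands quotient from segments and applying the Zelevinsky-type linkage criterion for $G_{k,D}$, one sees that no segment on one side can be linked to a segment on the other, and irreducibility follows; unitarity is then automatic from unitary parabolic induction.

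The exhaustion statement (3) is the main obstacle. The most efficient route is to transfer it from $\GL_{kd}$ via the extended Jacquet--Langlands correspondence $|\LJ|$ reviewed in Section~\ref{sec:local JL}. Any irreducible unitary $\pi$ of $G_{k,D}$ is realized as $|\LJ|(\pi_0)$ for some irreducible unitary $\pi_0$ of $\GL_{kd}$, to which Tadić's classification applies. Using that $|\LJ|$ intertwines parabolic induction and carries the atoms $u(\sigma_0,n)$ and $\pi(u(\sigma_0,n),\al)$ on the split side to the analogous atoms on the $D$-side, a decomposition of $\pi_0$ descends to a decomposition of $\pi$. The genuinely delicate point is the bookkeeping: one must check that if $\pi_0$ lies in the image of $|\LJ|$ then its Tadić atoms are all $d$-compatible (essentially, that the relevant segment multiplicities are divisible by $d$ so that the transfer produces nonzero atoms on the $D$-side). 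This compatibility is exactly what the character-identity characterization of $|\LJ|$ in \cite{Badulescu08,BR10} provides, so I would cite those results at this step rather than reprove them.
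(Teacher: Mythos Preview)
The paper does not actually prove this theorem: it is quoted as a known result, with a pointer to \cite{BR10} Section 7 for the history. So there is no argument in the paper to compare your proposal against; the theorem functions here purely as background input.

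That said, your outline for (1) and (2) is a fair sketch of the Tadi\'c strategy, and would be the right shape if you were asked to reconstruct the proof. Your treatment of (3), however, contains a genuine circularity. You write that ``any irreducible unitary $\pi$ of $G_{k,D}$ is realized as $|\LJ|(\pi_0)$ for some irreducible unitary $\pi_0$ of $\GL_{kd}$,'' and then push Tadi\'c's classification of $\pi_0$ through $|\LJ|$. But the surjectivity of $|\LJ|$ onto $\Irr^u_{k,D}$ is not something one knows in advance of the classification; in \cite{Badulescu08,BR10} it is established \emph{after} one already has the description of $\Irr^u_{k,D}$ in terms of the building blocks $u(\sigma',n)$ and $\pi(u(\sigma',n),\al)$, by computing $|\LJ|$ on each Tadi\'c atom for $\GL_{kd}$ and observing that every atom on the $D$-side is hit. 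So you are assuming the conclusion. The actual proof of exhaustion in the literature (Tadi\'c's conjectural scheme, completed by S\'echerre in the $p$-adic case and by Badulescu--Renard at Archimedean places) proceeds internally on $G_{k,D}$: one uses the Langlands classification to write $\pi$ as a Langlands quotient from essentially square-integrable data, imposes Hermitian symmetry to constrain the exponents, and then uses the irreducibility/unitarity results from (1) and (2) together with reducibility points to force the data into the form listed in $\scu_D$. If you want to salvage the transfer idea, you would need an independent argument that $|\LJ|$ is surjective onto unitary irreducibles, and no such argument is available without the classification itself.
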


We refer the reader to \cite{BR10} Section 7 for a comphensive history of this result. 

\subsection{Classification of the generic unitary dual}

The generic unitary dual of $G_{k}$ is given as follows. 

\begin{Thm}
Let $\scu_{gen}$ be the set of all representations of type $u(\sigma,1)$ or $\pi(u(\sigma,1),\al)$ where $l$ range over all positive integers, $\sigma\in \scd_l^u$ and $\alpha\in (0,1/2)$. Then we have the following:
\begin{enumerate}
\item all the representations in $\scu_{gen}$ are unitary and generic;
\item any product of representations in $\scu_{gen}$ is irreducible generic and unitary;
\item every irreducible generic unitary representation of $G_{k}$ is a product of representations in $\scu_{gen}$.
\end{enumerate}
\end{Thm}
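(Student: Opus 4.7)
The plan is to deduce this statement from the unitary dual classification in the preceding theorem (specialized to $D=F$), by characterizing exactly which Tadi\'c building blocks are generic. The key ancillary fact is the standard Rodier-type statement for $\GL_k$: for a standard parabolic $P=MU$ and a representation $\tau$ of $M$, Whittaker functionals on $\Ind_P^{G_k}(\tau)$ are in bijection with Whittaker functionals on $\tau$. In particular, an irreducible product $\tau_1\times\cdots\times\tau_r$ is generic if and only if each factor $\tau_i$ is generic.

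For (1), $u(\sigma,1)=\sigma$ is essentially square integrable and hence generic. The representation $\pi(u(\sigma,1),\al)=\nu^{\al}\sigma\times\nu^{-\al}\sigma$ is irreducible by the preceding theorem and is the full parabolic induction of a generic Levi representation, hence generic. Unitarity in both cases is part of the preceding theorem. For (2), any product of elements of $\scu_{gen}$ is irreducible and unitary by the preceding theorem, and generic by the ancillary fact applied to the generic factors provided by (1).

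For (3), which is the substantive direction, let $\pi$ be irreducible generic unitary. The preceding theorem expresses $\pi$ as an irreducible product of building blocks $u(\sigma_i,n_i)$ and $\pi(u(\sigma_j,n_j),\al_j)$; by the ancillary fact each such factor must itself be generic, and since $\pi(u(\sigma,n),\al)$ is the full parabolic induction of $\nu_\sigma^{\al}u(\sigma,n)\otimes\nu_\sigma^{-\al}u(\sigma,n)$, its genericity forces that of $u(\sigma,n)$. It therefore suffices to show that $u(\sigma,n)$ is non-generic for $n\geq 2$. In the split case $\nu_\sigma=\nu$, so by definition $u(\sigma,n)$ is the Langlands quotient of the standard module
\[
\nu^{(n-1)/2}\sigma\times\nu^{(n-3)/2}\sigma\times\cdots\times\nu^{(1-n)/2}\sigma,
\]
whose consecutive segments are linked when $n\geq 2$; this standard module is therefore reducible, and on $\GL$ its Langlands quotient is non-generic unless the standard module is already irreducible, since the unique generic constituent of a standard module on $\GL$ coincides with the whole module exactly when the module is irreducible. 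Hence all building blocks appearing in the Tadi\'c decomposition of $\pi$ are indexed by $n=1$, establishing (3).

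The main obstacle is this last step: the non-genericity of the Speh representation $u(\sigma,n)$ for $n\geq 2$. Everything else is a straightforward transport from the preceding theorem combined with standard properties of Whittaker models under parabolic induction.
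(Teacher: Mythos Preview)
Your argument is correct and is the standard deduction from the Tadi\'c classification via Rodier's heredity of Whittaker functionals and the non-genericity of $u(\sigma,n)$ for $n\geq 2$; the paper itself gives no proof at all, merely referring the reader to \cite{BR10} Section 8. One small remark: your phrasing in the last step (``linked segments'') is non-Archimedean, but the reducibility of the standard module for $n\geq 2$ follows uniformly from the very definition of $\nu_\sigma$, and the fact that the Langlands quotient of a reducible standard module for $\GL$ is never generic is the standard module conjecture, known for $\GL$ over all local fields --- so the argument covers the Archimedean case as well.
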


We refer the reader to \cite{BR10} Section 8 for more details. 

\subsection{The local Jacquet-Langlands correspondence}

Let $g\in G_{kd}$ and $g'\in G_{k,D}$. We say that $g$ \textit{corresponds} to $g'$ if both $g$ and $g'$ are regular semisimple and have the same characteristic polynomial. We shortly write $g\leftrightarrow g'$. Denote $G_{kd,d}$ the set of elements $g\in G_{kd}$ such that there exists $g'\in G_{k,D}$ with $g \leftrightarrow g'$.

The following theorem is proved in \cite{DKV84} if the characteristic of the base field $F$ is zero and \cite{Badulescu02} for the non-zero characteristic case.

\begin{Thm}
There is a unique bijection $\C:\scd_{kd}\to \scd_{k,D}$ such that for all $\sigma\in\scd_{kd}$ we have
\[
\chi_{\sigma}(g)=(-1)^{kd-k}\chi_{\C(\sigma)}(g')
\]
for all $g\in G_{kd}$ and $g'\in G_{k,D}$ such that $g\leftrightarrow g'$.
\end{Thm}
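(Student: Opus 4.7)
Every regular semisimple element $g'\in G_{k,D}$ has characteristic polynomial equal to that of some regular semisimple $g\in G_{kd}$, so the identity already determines $\chi_{\C(\sigma)}$ on the full regular semisimple locus of $G_{k,D}$. Linear independence of irreducible characters on this dense open set forces any two bijections satisfying the identity to coincide on every $\sigma$.

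\textbf{Existence, local step.} For existence I would follow the Deligne--Kazhdan--Vigneras strategy. The first ingredient is matching of orbital integrals: for any $f'\in C_c^\infty(G_{k,D})$ there is $f\in C_c^\infty(G_{kd})$ with $\Phi(g,f)=(-1)^{kd-k}\Phi(g',f')$ whenever $g\leftrightarrow g'$, and $\Phi(g,f)=0$ on regular semisimple $g$ not corresponding to any element of $G_{k,D}$; conversely, any $f$ supported in the set of elements matching some $g'$ admits a partner $f'$. The maximal tori on the two sides are parametrized by the same étale $F$-algebras of degree $kd$ (those embedding in $M_k(D)$), so the Weyl integration formula reduces the matching to a torus-by-torus comparison, and the sign $(-1)^{kd-k}$ records the parity difference between the split ranks of the two groups.

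\textbf{Existence, global step.} Next, I would globalize $F$ to a number field $F_0$ at a distinguished place $v_0$, and choose a central division algebra $D_0/F_0$ with $(D_0)_{v_0}=D$ that is split away from $v_0$ and one auxiliary finite place $v_1$. With $G_0=\GL_{kd}$ and $G_0'=\GL_{k,D_0}$ over $F_0$, the simple trace formula applied with a matrix coefficient of a supercuspidal representation at $v_1$ annihilates the non-discrete spectrum and collapses the geometric side to elliptic regular orbital integrals. Picking matching test functions at every other place then forces the geometric, hence spectral, sides to coincide. Combined with strong multiplicity one for $\GL_{kd}$, this yields a bijection between the discrete automorphic spectra of $G_0$ and $G_0'$ compatible with local characters at every place. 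At $v_0$ this descends to $\C$ on every $\sigma\in\scd_{kd}$ that arises as a local component, and a Poincaré series / trace Paley--Wiener argument shows that every local discrete series can be so globalized; the reverse argument gives bijectivity.

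\textbf{Main obstacles.} The main technical hurdles are (i) constructing matching functions with the correct support in both directions, especially engineering vanishing of $\Phi(g,f)$ on regular semisimple $g\in G_{kd}$ not coming from $G_{k,D}$; (ii) globalizing a prescribed local discrete series to a cuspidal automorphic representation with controlled ramification, which is essential for transferring the global bijection back to the local one; and (iii) carefully tracking the sign $(-1)^{kd-k}$ across the Weyl integration formula and the trace formula comparison, where individual torus contributions can easily disagree in sign if Haar measure normalizations are not fixed consistently.
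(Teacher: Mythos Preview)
The paper does not actually prove this theorem: it is stated as background and attributed to \cite{DKV84} in characteristic zero and to \cite{Badulescu02} in positive characteristic, with no argument given in the text. So there is no ``paper's own proof'' to compare against.

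That said, your outline is a faithful high-level sketch of the Deligne--Kazhdan--Vign\'eras strategy in \cite{DKV84}: transfer of orbital integrals with the correct sign, a simple trace formula comparison using a supercuspidal matrix coefficient at an auxiliary place to kill the continuous spectrum, and globalization of an arbitrary local discrete series so that the global bijection can be read off locally. Your list of obstacles is also accurate; in particular (i) and (ii) are exactly where most of the work in \cite{DKV84} lies, and (iii) is a genuine bookkeeping hazard. One small imprecision: when you globalize $D$ to $D_0$ split away from $v_0$ and a single auxiliary place $v_1$, you are implicitly using that the local invariants can be arranged to sum to zero with $(D_0)_{v_1}$ still a division algebra of the same index; this is fine by Brauer--Hasse--Noether but is worth saying explicitly, since otherwise the simple trace formula argument at $v_1$ does not get off the ground.
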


We identify the centers of $G_{kd}$ and $G_{k,D}$ via the canonical isomorphism. Thus the correspondence $\C$ preserves central characters. In particular, $\sigma\in \scd_{kd}^{u}$ if and only if $\C(\sigma)\in \scd_{k,D}^{u}$. 

\subsection{Classification of $\mathscr{D}_{k,D}$}

In this section, we review some necessary results on the classification of the discrete series representations. 

\subsubsection{The case $D=F$, non-Archimedean}
The classification of $\scd_k$ is given in terms of $\scc_l$, $l|k$. 

Let $l$ and $n$ be two positive integers and set $k=ln$. Let $\rho\in \scc_l$. Then the representation 
\[
\rho \times \nu\rho \times \cdots \times \nu^{n-1}\rho
\]
has a unique irreducible quotient $\sigma$. The representation $\sigma$ is an essentially square integrable representation of $G_k$. Notation: $\sigma=Z(\rho,n)$. Every $\sigma\in\scd_k$ is obtain in this way and $l,n$ and $\rho$ are determined by $\sigma$. (See \cite{Zelevinsky80} Section 9.) Moreover, for $\sigma \in \scd_k$, $\nu_{\sigma}=\nu$. 

We also define $Z^u(\rho,n)$ to be the unique irreducible quotient of 
\[
\nu^{(1-n)/2}\rho \times \nu^{(3-n)/2}\rho \times \cdots \times \nu^{(n-1)/2}\rho.
\]
This is a unitary representation. 

\subsubsection{The case of general $D$, non-Archimedean}

Let $l$ be a positive integer and $\rho'\in\scc_{l,D}$. Then $\sigma=\C^{-1}(\rho')$ is an essentially square integrable representation of $G_{dl}$. We may write $\sigma=Z(\rho,p)$ for some $p$ and $\rho\in\scc_{dl/p}$. Set $s(\rho')=p$ and $\nu_{\rho'}=\nu^{s(\rho')}$.

Let $n$ be a positive integers and set $k=ln$. Then the representation 
\[
\rho' \times \nu_{\rho'}\rho' \times \cdots \times \nu_{\rho'}^{n-1}\rho'
\] 
has a unique irreducible quotient $\sigma'$. The representation $\sigma'$ is an essentially square integrable representation of $G_{k,D}$. Notation: $\sigma'=T(\rho',n)$. Every $\sigma'\in\scd_{k,D}$ is obtained in this way and $l,n$ and $\rho'$ are determined by $\sigma'$. We then set $s(\sigma')=s(\rho')$. For this classification, see \cite{Tadic90}.

For $\sigma' \in \scd_{k,D}$, we have $\nu_{\sigma'}=\nu^{s(\sigma')}$. 

\subsubsection{Some notations}

Let $\sigma'\in \scd_{k,D}^{u}$. For any positive integer $n$, recall that $u(\sigma',n)$ the Langlands quotient of the induced representation  
\[
\nu_{\sigma'}^{(n-1)/2} \sigma' \times \nu_{\sigma'}^{(n-3)/2} \sigma' \times \cdots \times \nu_{\sigma'}^{(1-n)/2}\sigma'.
\] 
We denote by $\bar u(\sigma',n)$ the Langlands quotient of the induced representation 
\[
\nu^{(n-1)/2} \sigma' \times \nu^{(n-3)/2} \sigma' \times \cdots \times \nu^{(1-n)/2}\sigma'.
\] 
Both $u(\sigma',k)$ and $\bar u(\sigma',k)$ are irreducible representations. 

\subsubsection{The Archimedean case}

We refer the reader to \cite{BR10} for a comprehensive discussion. We first discuss the real case. In this case, $\mathscr{D}$ consists of the following representations: 
\begin{itemize}
\item the unitary characters of $\mathbb{R}^{\times}$;
\item the Langlands quotient of the induced representation 
\[
\chi\nu^{n} \times \chi\nu^{-n}
\]
for a positive integer $n$.
\end{itemize}
For all $\sigma \in \mathscr{D}$, $\nu_{\sigma}=\nu$. 



In the case $F=\mathbb{C}$, the set $\mathscr{D}_{\mathbb{C}}$  is the set of unitary characters of $\mathbb{C}^{\times}$. 

Finally, if $D=\mathbb{H}$ is the unique quaterion algebra over $\mathbb{R}$, then $\mathscr{D}_{\mathbb{H}}$ consists of all the irreducible finite-dimensional representations of $\mathbb{H}^{\times}$. Moreover, $\nu_{\sigma}=\nu^2$ if $\dim \sigma =1$; $\nu_{\sigma}=\nu$ if $\dim \sigma >1$. 

\subsection{The involution}
We need the Aubert involution in the non-Archimedean case. We will use the notation only and will not use any explicit calculation. 

Aubert defined in \cite{Aubert95} an involution 
of the Grothendieck group of smooth representations of finite length of a reductive group over a local non-Archimedean field. The involution sends an irreducible representation to an irreducible representation up to a sign. We specialize this involution to $G_k$ (resp. $G_{k,D}$) and denote it $i_k$ (resp. $i_k'$). We will write $i$ and $i'$ when the index is not relevant or it is clearly understood. With this notation we have the relation $i(\pi_1)\times i(\pi_2)=i(\pi_1\times \pi_2)$, i.e. “the involution commutes with the parabolic induction”. The same holds for $i'$. The reader may find all these facts in \cite{Aubert95}.

If $\pi \in \Irr_k$, then one and only one of $i(\pi)$ and $-i(\pi)$ is an irreducible representation. We denote it by $|i(\pi)|$. We denote $|i|$ the involution of $\Irr_k$ defined by $\pi\mapsto |i(\pi)|$.  The same facts and definitions also hold for $i'$.




\section{Local Jacquet-Langlands correspondence}\label{sec:local JL}

In order to define a global Jacquet-Langlands correspondence for automorphic representations, it is not sufficient to transfer only square integrable representations as in the classical theory (for example, see \cite{DKV84}). It would be necessary to transfer at least the local components of global discrete series. This was achieved in \cite{Badulescu08,BR10}. In these two papers, the local transfer for all unitary representations is established. A global correspondence for discrete series compatible with the local transfer is therefore proved.

In this section, we review the Jacquet-Langlands correspondence as proved in \cite{Badulescu08,BR10}. The notations are $\C$ (for discrete series representations), $\LJ$ (for the Grothendieck ring) and $|\LJ|$ (for all unitary representations), respectively. 

\subsection{The extended correspondence}

The correspondence $\C^{-1}$ can be extended in a natural way to a correspondence between the Grothendieck groups. 

\begin{Thm}[\cite{Badulescu07}]
\begin{enumerate}
\item For all positive integers $k$, $\LJ_k$ is the unique map from $\scr_{kd}$ to $\scr_{k,D}$ such that for all $\pi\in \scr_{kd}$, we have 
\begin{equation}\label{eq:sign in LJ for Grothendieck}
\chi_{\pi}(g) = (-1)^{kd-k} \chi_{\LJ_k(\pi)}(g')
\end{equation}
for all $g\leftrightarrow g'$. 
\item The maps $\LJ_k$ commute with the parabolic induction.  
\end{enumerate}
\end{Thm}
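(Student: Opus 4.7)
The plan is to establish uniqueness (part 1) by a short character-theoretic argument, then construct $\LJ_k$ existentially on a basis of standard modules, and finally to deduce commutation with parabolic induction (part 2) as a formal consequence of uniqueness.

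For uniqueness, suppose two maps $\LJ_k,\LJ_k'$ both satisfy (\ref{eq:sign in LJ for Grothendieck}). Their difference $\LJ_k(\pi)-\LJ_k'(\pi)\in\scr_{k,D}$ has vanishing character at every $g'\in G_{k,D}$ that is matched by some $g\in G_{kd}$. Every regular semisimple $g'\in G_{k,D}$ is matched: via the standard embedding $G_{k,D}\hookrightarrow G_{kd}$ the characteristic polynomial of $g'$ produces a regular semisimple $g$ with $g\leftrightarrow g'$. Hence the characters of $\LJ_k(\pi)$ and $\LJ_k'(\pi)$ agree on the regular semisimple locus of $G_{k,D}$, which by Harish-Chandra's theorem (virtual admissible representations of finite length are determined by their characters on the regular semisimple locus) forces $\LJ_k(\pi)=\LJ_k'(\pi)$.

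For existence I would construct $\LJ_k$ on the basis of $\scr_{kd}$ given by standard modules $\pi=\sigma_1\times\cdots\times\sigma_r$ with $\sigma_i\in\scd_{l_i}$ and $\sum l_i=kd$, and then extend linearly. The character formula for parabolic induction expresses $\chi_\pi(g)$ at regular semisimple $g\in G_{kd}$ as a sum over Weyl conjugates of the Levi $\prod G_{l_i}$ of products $\prod_i\chi_{\sigma_i}(g_i)$, where the $g_i\in G_{l_i}$ are the block components of a Levi conjugate of $g$. For $g\leftrightarrow g'\in G_{k,D}$ one checks that the only Weyl conjugates with nonzero contribution come from Levis whose block sizes $l_i$ are each divisible by $d$, so that the Levi descends to $\prod G_{l_i/d,D}\subset G_{k,D}$ and each block $g_i$ corresponds to some $g_i'\in G_{l_i/d,D}$. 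The DKV identity $\chi_{\sigma_i}(g_i)=(-1)^{l_i-l_i/d}\chi_{\C(\sigma_i)}(g_i')$ then applies factor-by-factor, with the signs multiplying to the desired $(-1)^{kd-k}=\prod_i(-1)^{l_i-l_i/d}$, and reassembling leads to the definition
\[
\LJ_k(\sigma_1\times\cdots\times\sigma_r)=\C(\sigma_1)\times\cdots\times\C(\sigma_r)
\]
when $d\mid l_i$ for all $i$, and $\LJ_k(\pi)=0$ otherwise.

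Part (2) then follows directly from (1): by uniqueness, to show $\LJ_{k_1+k_2}(\pi_1\times\pi_2)=\LJ_{k_1}(\pi_1)\times\LJ_{k_2}(\pi_2)$, it suffices to verify that the right hand side satisfies the defining character identity for the left, which is a second application of the parabolic induction character formula on both sides of $g\leftrightarrow g'$. The hardest step of the whole program is the combinatorial verification inside the existence step: namely, that the standard modules whose block datum $(l_1,\ldots,l_r)$ fails $d\mid l_i$ really do have vanishing character on $G_{kd,d}$, which requires checking that Weyl conjugates moving blocks across the divisibility boundary cancel in pairs. This is the place where the invariants $s(\rho'),\nu_\sigma$ of the Tadi\'c--Badulescu classification of $\scd_{k,D}$ enter and must be threaded carefully through the bookkeeping.
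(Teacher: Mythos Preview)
The paper does not prove this theorem; it is quoted from \cite{Badulescu07} and used as a black box. So there is nothing in the paper to compare against, but your outline does match the architecture of Badulescu's original argument: uniqueness via linear independence of characters, existence by explicit construction on the standard-module basis of $\scr_{kd}$, and compatibility with parabolic induction as a formal consequence.

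The outline is sound, but your diagnosis of the ``hardest step'' misfires in two ways. First, the vanishing of $\chi_{\sigma_1\times\cdots\times\sigma_r}(g)$ for $g\in G_{kd,d}$ when some $l_i$ is not divisible by $d$ is not a matter of Weyl conjugates cancelling in pairs. Each term in the van Dijk induced-character formula is \emph{individually} zero: if $g\leftrightarrow g'$ with $g'\in G_{k,D}$ regular semisimple, then every irreducible factor of the characteristic polynomial of $g$ has degree divisible by $d$, because maximal tori of $G_{k,D}$ over a local field are products $\prod E_i^\times$ with each $[E_i:F]$ a multiple of $d$ (the idempotents of the \'etale algebra $\prod E_i$ cut $M_k(D)$ into blocks $M_{r_i}(D)$, and $E_i$ is a maximal subfield there, forcing $[E_i:F]=r_i d$). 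Hence $g$ is never conjugate into a Levi $\prod G_{l_i}$ with some $d\nmid l_i$, and the sum defining the induced character is empty. Second, the invariants $s(\rho')$ and $\nu_\sigma$ from the Tadi\'c classification are irrelevant to this step; they govern reducibility of parabolic inductions on the $G_{k,D}$ side and enter only later, when one computes $\LJ$ on specific unitary representations as in Proposition~\ref{prop:transfer of unitary non-arch}. The existence argument is therefore more elementary than you suggest, resting on the structure of transferred conjugacy classes rather than on representation-theoretic bookkeeping.
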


We say that $\pi\in \scr_{kd}$ is $d$-compatible if $\LJ_k(\pi)\neq 0$. This means that there exists $g\in G_{kd,d}$ such that $\chi_{\pi}(g)\neq 0$. Note that this definition depends only on $d$, not on $D$. 

In this paper, we only need to understand this correspondence for unitary representations. By the classification of unitary representations of $G_k$, it suffices to understand $\LJ$ for representations of the form $u(\sigma,n)$.

\begin{Rem}
The convention in \cite{BR10} Section 4 (the Archimedean case) is slightly different from \cite{Badulescu08}. To unify the presentation, we also take the convention in \cite{Badulescu08} in the Archimedean case.

Moreover, in the Archimedean case, the sign in \eqref{eq:sign in LJ for Grothendieck} is not explicitly given in \cite{BR10}. But it is not difficult to see that the calculation can be done as in the non-Archimedean case. 
\end{Rem}

\subsection{Transfer of $u(\sigma,n)$: non-Archimedean case}

We now collect results regarding the transfer of $u(\sigma,k)$ (see \cite{Badulescu08} Section 3.2). 

Let $n,l,q$ be positive integers. Set $k=lnq$. Let $\rho\in \scc_q^u$ and $\sigma=Z^u(\rho,l)\in \scd_{lq}^u$, $\tau=Z^u(\rho,n)\in \scd_{nq}^u$.

Let $s$ be the smallest positive integer such that $d\mid sq$. We first define the transfer of $u(\sigma,n)$. This question has no meaning unless $d\mid k$ (i.e. $s\mid ln$) which we shall assume.

\begin{Prop}[\cite{Badulescu08} Proposition 3.7]\label{prop:transfer of unitary non-arch}
\begin{enumerate}
\item If $d\mid lq$ (i.e. $s\mid l$), then $\sigma'\in \C(\sigma)$ is well-defined; we have $s=s(\sigma')$ and
\[
\LJ(u(\sigma,n))=\bar u(\sigma',n).
\]
\item If $d\mid nq$ (i.e. $s\mid n$), then $\tau'=\C(\tau)$ is well-defined; we have $s=s(\tau')$ and
    \[
    \LJ(u(\sigma,n))=\varepsilon |i'(\bar u(\tau',l))|
    \]
    where $\varepsilon=1$ if $s$ is odd and $\varepsilon=(-1)^{\frac{ln}{s}}$ if $s$ is even.
\item If $d$ does not divide neither $lq$, nor $nq$ (i.e. $s$ does not divide neither $l$ nor $n$), then $\LJ(u(\sigma,n))=0$.
\end{enumerate}
\end{Prop}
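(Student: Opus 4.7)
\emph{Strategy.} The plan is to treat the three cases by combining three structural ingredients: the compatibility of $\LJ$ with parabolic induction (the preceding theorem); the Aubert--Zelevinsky identity on Speh representations,
\[
|i|\bigl(u(Z^u(\rho,l),n)\bigr) = u(Z^u(\rho,n),l),
\]
which swaps the two parameters of a Speh representation; and the intertwining relation $\LJ \circ i = \varepsilon_0 \, i' \circ \LJ$ with an explicit sign $\varepsilon_0$ that can be read off from the character identity defining $\LJ$ and the definition of the Aubert involutions. Throughout one uses that for a discrete series $\sigma$ with $d \mid \dim\sigma$, the Grothendieck-group image satisfies $\LJ(\nu^a \sigma) = \nu^a \C(\sigma)$, which follows from the character identity after twisting by $\nu^a$.

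\emph{Part (1), $s \mid l$.} Here $d \mid lq$, so $\sigma = Z^u(\rho,l)$ is $d$-compatible and $\sigma' = \C(\sigma)$ is well-defined; the classification of $\mathscr{D}_{\cdot,D}$ recalled in the preliminaries gives $s(\sigma') = s$. Apply $\LJ$ to the standard module
\[
\Pi = \nu^{(n-1)/2}\sigma \times \nu^{(n-3)/2}\sigma \times \cdots \times \nu^{(1-n)/2}\sigma,
\]
of which $u(\sigma,n)$ is the Langlands quotient. Parabolic compatibility gives
\[
\LJ(\Pi) = \nu^{(n-1)/2}\sigma' \times \cdots \times \nu^{(1-n)/2}\sigma',
\]
whose Langlands quotient is by definition $\bar u(\sigma',n)$. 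To pin down $\LJ(u(\sigma,n)) = \bar u(\sigma',n)$ without lower-order correction, one matches Langlands data: every other Jordan--H\"older constituent of $\Pi$ is the Langlands quotient of a standard module with strictly smaller leading exponent in the Zelevinsky ordering, and its $\LJ$-image is supported on standard quotients of the same smaller shape on the $D$-side, hence cannot contribute $\bar u(\sigma',n)$.

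\emph{Part (2), $s \mid n$.} By the Aubert--Zelevinsky identity, $|i|(u(\sigma,n)) = u(\tau,l)$ with $\tau = Z^u(\rho,n) \in \mathscr{D}_{nq}^u$. The hypothesis $s \mid n$ puts $u(\tau,l)$ in the setting of Part (1) with the roles of $l$ and $n$ exchanged, yielding $\LJ(u(\tau,l)) = \bar u(\tau',l)$ with $\tau' = \C(\tau)$. Applying the involution compatibility to $u(\sigma,n)$ then gives
\[
\LJ(u(\sigma,n)) = \varepsilon \, \bigl| i'(\bar u(\tau',l)) \bigr|,
\]
where $\varepsilon$ is the product of the sign $\varepsilon_0$ from the involution compatibility and the sign needed to pass between $i'$ and $|i'|$. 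A short arithmetic check using the parity of $ln/s$ yields $\varepsilon = 1$ for $s$ odd and $\varepsilon = (-1)^{ln/s}$ for $s$ even.

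\emph{Part (3), $s \nmid l$ and $s \nmid n$.} Neither $\sigma$ nor $\tau$ is $d$-compatible, so neither of the previous reductions applies directly. The plan is to expand $[u(\sigma,n)]$ in the Grothendieck group as an integer combination of standard modules built from segments $Z(\nu^a\rho, m)$ via the Zelevinsky classification and Tadi\'c's product formulas, and to show that every summand contains at least one factor whose dimension is \emph{not} divisible by $d$. Under the divisibility hypothesis, no segment of length a proper divisor of $ln$ can split off as a $d$-compatible block of the standard module without using both $l$ and $n$ simultaneously; hence each summand, via parabolic compatibility, transfers to zero. Summing over the expansion yields $\LJ(u(\sigma,n)) = 0$.

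\emph{Main obstacle.} The principal difficulty is Part (3): producing an expansion of $[u(\sigma,n)]$ refined enough that the $d$-incompatibility obstruction is manifest on each standard-module summand. The combinatorial heart of the matter is that when $s$ divides neither $l$ nor $n$, no segment appearing in the Zelevinsky expansion can individually be $d$-compatible, so summing over the expansion gives zero after transfer. This step, and the careful tracking of signs in Part (2), is where the serious work of the proof lies.
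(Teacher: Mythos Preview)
The paper does not prove this proposition: it is quoted verbatim as \cite{Badulescu08} Proposition~3.7 and used as a black box. There is therefore nothing in the present paper to compare your proposal against.

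That said, your outline is broadly in the spirit of Badulescu's original argument, which also hinges on the compatibility of $\LJ$ with parabolic induction, the relation $\LJ\circ i=\pm\, i'\circ\LJ$, and the Zelevinsky--Tadi\'c combinatorics. Two comments. In Part~(1), your Langlands-data matching is a plausible heuristic but not a complete argument: the equality $\LJ(u(\sigma,n))=\bar u(\sigma',n)$ is not obtained just by tracking leading exponents of the standard module; Badulescu uses an explicit identity for $\LJ$ on (essentially) square-integrable representations together with Tadi\'c's product formulas. In Part~(3), your claim that ``no segment appearing in the Zelevinsky expansion can individually be $d$-compatible'' is not correct as stated and is in any case not what is needed. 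The Grothendieck expansion of $u(\sigma,n)$ involves products of segments of varying lengths, and a given product can have $\LJ$-image zero even when some factors are $d$-compatible; conversely, the obstruction you want is that in each \emph{product} at least one factor fails $d$-compatibility. Establishing this requires a precise combinatorial control over which multisegments occur with nonzero coefficient in the expansion of $u(\sigma,n)$ (this is the content of Tadi\'c's determinantal formula and its analysis in \cite{Badulescu08}), and your proposal does not supply it. You correctly flag Part~(3) as the main obstacle, but as written the argument there is only a heuristic.
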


\begin{Rem}
It is easy to see that if $q=1$, then $s=d$. 
\end{Rem}

\subsection{The Archimedean case}\label{sec:JL Archimedean}
The above discussion can be carried out for the Archimedean case. We recall the necessary results here. For more details, see \cite{BR10}.



Let $X_{\mathbb{R}}$ be the set of unitary characters of $\mathbb{R}$. For $\chi\in X_{\mathbb{R}}$ and a positive integer $n$, we define $\chi_n:=\chi\circ \nu$ and $\chi'_n:=\chi\circ \nu_{n,\mathbb{H}}$.  

The computation of $\LJ$ on representations of the form $u(\sigma,n)$ is given as follows (\cite{BR10}, Theorem 13.8). We remind the reader that the definition of $\LJ$ in \cite{BR10} Section 4 is slightly different from what we use here. 
\begin{enumerate}
\item $\LJ(\chi_{2n})=\chi_n'$ and $\LJ(\pi(\chi_{2n},\al))=\pi(\chi'_n,\al)$ for all $\chi\in X_{\bR}$ and $\al\in (0,1/2)$.
\item If $\delta\in \mathscr{D}_2^u$ is such that $\dim \C(\delta)>1$, then
    \[
    \LJ(u(\delta,n))=(-1)^n u(\C(\delta),n), \qquad \LJ(\pi(u(\delta,k),\al))=\pi(u(\C(\delta),k),\al)
    \]
    for all $\al\in (0,1/2)$.
\item If $\delta\in \mathscr{D}_2^u$ is such that $\C(\delta)$ is a one-dimensional representation $\chi'_1$, then
    \begin{itemize}
      \item $\LJ(u(\delta,n))=\pi(\chi'_{n/2},1/2)$ and $\LJ(\pi(u(\delta,n),\al))=\pi(u(\chi'_{n/2},1/2),\al)$ if $n$ is even and $\al\in (0,1/2)$;
      \item $\LJ(u(\delta,n))=(-1)^n\chi'_{(n+1)/2}\times \chi'_{(n-1)/2}$ and $\LJ(\pi(u(\delta,n)\al))=\pi(\chi'_{(n+1)/2},\al)\times \pi(\chi'_{(n-1)/2},\al)$ if $n\neq 1$ is odd and $\al\in (0,1/2)$;
      \item $\LJ(\delta)=\chi'_1$ and $|\LJ|(\pi(\delta,\al))=\pi(\chi'_1,\al)$ for $\al\in (0,1/2)$.
    \end{itemize}
\end{enumerate}


\subsection{Transfer of unitary representations}

An irreducible unitary representation $\pi$ is written as a product of elements in $\mathcal{U}_F$. Note that $\LJ$ commutes with parabolic induction. If $\pi \in \Irr_{kd}^u$, then $\LJ(\pi)=0$ or $\LJ(\pi)$ is an irreducible unitary representation $\pi'$ of $G_{k,D}$ up to a sign. We write $\pi'=|\LJ|(\pi)$. So we have a map $|\LJ|$ from the set of $d$-compatible irreducible unitary representations of $G_{kd}$ to the set of unitary representations of $G_{k,D}$. 

As an immediate consequence, we have the following result. 

\begin{Thm}[\cite{Badulescu08,BR10}]
If $\pi$ is a $d$-compatible irreducible unitary representation of $G_{kd}$, then there exists a unique irreducible unitary representation $\pi'$ of $G_{k,D}$ and a unique sign $\varepsilon_{\pi}\in \{-1,1\}$ such that
\[
\chi_{\pi}(g)=\varepsilon_{\pi} \chi_{\pi'}(g')
\]
for all $g\in G_{nd,d}$ and $g\leftrightarrow g'$.
\end{Thm}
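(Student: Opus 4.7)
The plan is to deduce the theorem from the character identity for $\LJ_k$ at the Grothendieck-group level (already in hand) by combining the classification of the unitary dual with the explicit formulas for $\LJ$ on the building blocks $u(\sigma,n)$ and $\pi(u(\sigma,n),\al)$. First I would invoke the unitary-dual classification of $G_{kd}$ to write
\[
\pi = \pi_1 \times \cdots \times \pi_r,
\]
where each $\pi_i$ belongs to $\scu_F$. Since $\LJ_k$ commutes with parabolic induction, this yields $\LJ(\pi) = \LJ(\pi_1) \times \cdots \times \LJ(\pi_r)$ in $\scr_{k,D}$.

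Next, I would apply Proposition \ref{prop:transfer of unitary non-arch} in the non-Archimedean case and the formulas of Section \ref{sec:JL Archimedean} in the Archimedean case, together with the fact that $\LJ$ is compatible with twists by the norm character (corresponding elements have equal reduced norm and determinant, so $\nu_{kd}(g) = \nu_{k,D}(g')$), to evaluate each factor. The outcome is that each $\LJ(\pi_i)$ is either zero or of the form $\varepsilon_i \pi'_i$ with $\varepsilon_i \in \{\pm 1\}$ and $\pi'_i \in \scu_D$. The $d$-compatibility hypothesis $\LJ(\pi) \neq 0$ forces no factor to vanish, and part (2) of the unitary-dual classification for $G_{k,D}$ then guarantees that the product $\pi' := \pi'_1 \times \cdots \times \pi'_r$ is already an irreducible unitary representation. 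Hence $\LJ(\pi) = \varepsilon \pi'$ in $\scr_{k,D}$ with $\varepsilon := \prod_i \varepsilon_i$, and combining with the defining identity $\chi_\pi(g) = (-1)^{kd-k}\chi_{\LJ_k(\pi)}(g')$ and setting $\varepsilon_\pi := (-1)^{kd-k}\varepsilon$ produces the desired character identity $\chi_\pi(g) = \varepsilon_\pi \chi_{\pi'}(g')$.

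For uniqueness, linear independence of the characters of inequivalent irreducible admissible representations on the regular semisimple locus of $G_{k,D}$ forces $\pi'$ to be unique, and the sign $\varepsilon_\pi$ is then also uniquely determined because $d$-compatibility guarantees at least one pair $g \leftrightarrow g'$ with $\chi_\pi(g) \neq 0$, hence $\chi_{\pi'}(g') \neq 0$. The main obstacle I anticipate is on the Archimedean side: one must check case by case from the list in Section \ref{sec:JL Archimedean} that every $\LJ(\pi_i)$ really lies in $\pm \scu_D$ (e.g.\ the case $\dim \C(\delta) = 1$, where $u(\delta,n)$ transfers to an induced representation of the shape $\chi'_{(n+1)/2} \times \chi'_{(n-1)/2}$, must be recognized as an element of $\scu_{\bH}$), and one must track the sign discrepancy between the normalization of \cite{BR10} and the one adopted here. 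Once those verifications are in place, the rest of the argument is a purely formal manipulation on the Grothendieck group $\scr_{k,D}$.
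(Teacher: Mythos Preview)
Your proposal is correct and follows essentially the same approach as the paper. The paper presents this theorem as an ``immediate consequence'' of the preceding discussion (writing $\pi$ as a product of elements of $\scu_F$, using that $\LJ$ commutes with parabolic induction, and invoking the explicit formulas for $\LJ$ on the building blocks together with the irreducibility of products in $\scu_D$), and your argument simply spells out those details, including the uniqueness clause which the paper leaves implicit.
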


\begin{Rem}
The sign $\varepsilon_{\pi}$ and $\pi'$ can be compute explicitly. We refer the reader to \cite{Badulescu08} Section 3.3 for more  details. In this paper, we only need to calculate $\varepsilon$ for a special class of representations. 
\end{Rem}

\section{Quaternionic Speh representations}\label{sec:quaternionic speh}

The purpose of this section is to define a class of representation of $G_{k,D}$ with unique models. They are defined using the Jacquet-Langlands correspondence of the Speh representations. 

\subsection{The Speh representations}

In this section we recall the construction of the Speh representations and discuss some of their properties.

We now consider the local situation. Let $F$ be a local field. Let $\tau$ be an irreducible unitary generic representation of $G_k$. We attach a representation $\Speh(\tau,n)$ of $G_{kn}$ inductively as follows.

If $\tau$ is a discrete series representation, then we define $\Speh(\tau,n):=u(\tau,n)$. 

Let $\tau$ be an irreducible unitary generic representation of $G_k$. By the classification of unitary representations (\cite{Tadic86,Vogan86}), there exist discrete series representations $\tau_1,\cdots, \tau_m$ and $s_1,\cdots,s_d\in (-1/2,1/2)$ such that
\[
\tau=\tau_1\nu^{s_1}\times \cdots \times \tau_m \nu^{s_m}.
\]
The order of $1,\cdots,m$ does not change the isomorphism class of $\tau$. 
We define
\[
\Speh(\tau,n)=\Speh(\tau_1,n)\nu^{s_1} \times \cdots \times \Speh(\tau_m,n)\nu^{s_m}.
\]
By \cite{MW89} I.11, $\Speh(\tau,n)$ is irreducible and thus well-defined (i.e. the isomorphism class of $\Speh(\tau,n)$ does not depend on the order of $1,\cdots,m$).

\subsection{Digression on degenerate Whittaker models}

An important property of the Speh representations is that they are degenerate representations with unique models. We now briefly recall the theory of degenerate Whittaker models and related topics. The reader is referred to \cite{MW89,GGS17} and the recent paper \cite{GS19} for more details. 

Let $\mathfrak{g}_k$ denote the Lie algebra of $G_k$ and $\mathfrak{g}_k^{\ast}$ denotes its dual space. To every coadjoint nilpotent orbit $\mathcal{O}\subset \mathfrak{g}_k^{\ast}$ and every $\pi\in \mathrm{Rep}(G_k)$ we associate a certain generalized Whittaker quotient $\pi_{\mathcal{O}}$. Let $\mathrm{WO}(\pi)$ denote the set of all nipotent orbit $\mathcal{O}$ with $\pi_{\mathcal{O}}\neq 0$ and $\mathrm{WS}(\pi)$ denote the set of maximal orbits in $\mathrm{WO}(\pi)$ with respect to the closure ordering. We call $\mathrm{WS}(\pi)$ the Whittaker support of $\pi$. 

This can also be done in the global situation using degenerate Whittaker coefficients. We use notation $\mathcal{F}_{\mathcal{O}}(f)$ for an automorphic form $f$ to denote such degenerate Whittaker coefficients . 

We would like to introduce another nilpotent orbit attached to a representation. Denote by $\mathcal{M}(G)$ the category of admissible (finitely-generated) representations. For $\pi\in \mathcal{M}(G)$, and a nilpotent orbit $\mathcal{O}\subset \mathfrak{g}^{\ast}$, \cite{Harish-Chandra77, BV80} define a coefficient $c_{\mathcal{O}}(\pi)$ using the asymptotics of the character of $\pi$ at $1\in G$. Denote by $\mathrm{WF}(\pi)$ the set of maximal elements in the set of orbit with nonzero coefficients. For non-Archimedean $F$, this set coincides with $\mathrm{WS}(\pi)$. 


\begin{Thm}[\cite{MW87} Proposition I.11, Theorem I.16 and Corollary I.17, and \cite{Varma14}]\label{thm:relation whittaker and character expansion}
Assume that $F$ is non-Archimedean and $G$ is algebraic. Let $\pi\in \mathcal{M}(G)$. Then 
\begin{enumerate}
\item $\mathrm{WF}(\pi)=\mathrm{WS}(\pi)$.
\item For any $\mathcal{O}\in \mathrm{WF}(\pi)$, $c_{\mathcal{O}}(\pi) = \dim \pi_{\mathcal{O}}$. 
\end{enumerate}
\end{Thm}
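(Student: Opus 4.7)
The plan is to combine the Howe--Harish-Chandra local character expansion with M{\oe}glin--Waldspurger's trick of extracting generalized Whittaker functionals from carefully chosen test functions, and then invoke Varma's sharpening of their inequality to upgrade it to an equality. First I would recall the expansion: there is a neighborhood $U$ of $0$ in $\fg$ such that for all $X \in U$ with $\exp X$ regular semisimple,
\[
\chi_\pi(\exp X) = \sum_{\mathcal{O}} c_{\mathcal{O}}(\pi)\, \widehat{\mu}_{\mathcal{O}}(X),
\]
the sum running over nilpotent coadjoint orbits $\mathcal{O}\subset \fg^{\ast}$, and $\widehat{\mu}_{\mathcal{O}}$ is the Fourier transform of a canonically normalized invariant measure on $\mathcal{O}$. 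For a fixed nilpotent orbit $\mathcal{O}$, I would choose a Jacobson--Morozov triple $(e,h,f)$ with $f\in \mathcal{O}$, and let $(N_{\mathcal{O}}, \psi_{\mathcal{O}})$ be the unipotent subgroup and generic character attached to it, so that $\pi_{\mathcal{O}}=\pi_{N_{\mathcal{O}},\psi_{\mathcal{O}}}$ is the twisted Jacquet module.

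The heart of the argument is the construction, following M{\oe}glin--Waldspurger, of a sequence of smooth compactly-supported test functions $\phi_n$ on $\fg$ supported near the closure of $\mathcal{O}$, conjugation-invariant, and scaled so that pulling back via $\exp$ one obtains functions $f_n$ on $G$ whose traces $\operatorname{tr}\pi(f_n)$ can be computed in two ways. On one hand, direct analysis of $f_n$ acting on $\pi$ (via a Kirillov-type model argument using the grading from $h$) shows that $\operatorname{tr}\pi(f_n)$ stabilizes for $n$ large to $\dim \pi_{\mathcal{O}}$, provided $\mathcal{O}$ is maximal in $\mathrm{WO}(\pi)$. On the other hand, substituting the character expansion and using Fourier inversion on $\fg$ shows that $\operatorname{tr}\pi(f_n)$ picks out only those $c_{\mathcal{O}'}(\pi)$ with $\mathcal{O}'$ meeting the support of $\widehat{\phi}_n$; by shrinking supports only orbits $\mathcal{O}'\supseteq \overline{\mathcal{O}}$ can contribute, and a maximality argument together with homogeneity of the $\widehat{\mu}_{\mathcal{O}'}$ isolates the $\mathcal{O}$-contribution.

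Comparing the two computations yields $\dim \pi_{\mathcal{O}} \leq c_{\mathcal{O}}(\pi)$ for every $\mathcal{O}\in \mathrm{WS}(\pi)$, and symmetrically $c_{\mathcal{O}}(\pi) \leq \dim \pi_{\mathcal{O}}$ for every maximal element of $\mathrm{WF}(\pi)$. In particular both sets of maximal orbits coincide, proving part (1), and for such orbits the two quantities agree up to a constant depending only on a choice of measure normalization. The final step, which I expect to be the main obstacle, is Varma's verification that with the standard normalization of $\mu_{\mathcal{O}}$ (compatible with the conventions used in the definition of $\pi_{\mathcal{O}}$) the constant is exactly $1$, so that $c_{\mathcal{O}}(\pi) = \dim \pi_{\mathcal{O}}$; this requires a careful Shalika-germ computation and tracking of Haar measure normalizations through the M{\oe}glin--Waldspurger construction, and it is precisely the part that needs the algebraicity of $G$ to invoke uniform estimates on nilpotent orbital integrals.
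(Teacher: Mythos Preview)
The paper does not supply its own proof of this statement; it is quoted as a black box from the cited references \cite{MW87} and \cite{Varma14}, so there is nothing to compare your argument against on the paper's side. Your sketch is a reasonable outline of the strategy actually carried out in those sources.

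One point deserves correction. You describe Varma's contribution as verifying that a normalization constant equals $1$. That is not the role of \cite{Varma14}: M{\oe}glin and Waldspurger already proved the exact equality $c_{\mathcal{O}}(\pi)=\dim\pi_{\mathcal{O}}$ for maximal orbits in \cite{MW87} (their Corollaire I.17), but their argument required the residue characteristic of $F$ to be different from $2$ (because they use the exponential map and certain lattice computations that behave badly in residue characteristic $2$). Varma's contribution is to remove this restriction, extending the result to all residue characteristics by replacing the exponential with a suitable substitute and redoing the delicate estimates. The algebraicity hypothesis on $G$ is used throughout the M{\oe}glin--Waldspurger argument to have a well-behaved theory of nilpotent orbits, $\fsl_2$-triples, and an exponential-type map near the identity; it is not special to Varma's step.
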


In the Archimedean case, only partial results are known (see \cite{GGS17} Section 3.3).

\begin{Rem}
In the main body of this paper, we only use the following explicit definition of the degenerate models attached to the orbit $(k^n)_D$. This is in the group $\mathrm{GL}_{kn,D}$. The unipotent subgroup is $N_{(k^n)_D}$: 
\[
N_{(k^n)_D}=
\left\{
u \mid 
u=
\begin{pmatrix}
I_n & X_{12} & \cdots & \cdots & \ast \\
 & I_n & X_{23} & \cdots & \ast \\
&&\ddots & \cdots &\cdots\\
&&&I_n &X_{k-1,k}\\
&&&&I_n\\
\end{pmatrix} \in \GL_{kn,D}
\right\}
\]
In other words, it is the set of upper triangular unipotent $n\times n$ block matrices. We define a character 
\[
\psi_{(k^n)_D}(u)=\psi(\mathrm{tr}(X_{12} + X_{23} + \cdots + X_{k-1,k})).
\]
When $n=1$, this is the ``Whittaker'' coefficient. 

We also define 
\[
\mathrm{Wh}_{(k^n)_D}(\pi) = \Hom_{N_{(k^n)_D}}(\pi,\psi_{(k^n)_D}). 
\]
If $\mathrm{Wh}_{(k^n)_D}(\pi) \neq 0$, we say that $\pi$ supports a non-vanishing $(k,n)_D$-model. 
\end{Rem}

\begin{Rem}
Note that the above definition works when $D$ is a central simple algebra as well. Sometimes it is convenient to use this notation so we discuss it here. 

Let $D=\mathrm{M}_{m}(D')$ for a central division algebra $D'$ over $F$. Then $\GL_{n,D}=\GL_{mn,D'}$. We also have 
\[
(N_{(k^n)_D},\psi_{(k^n)_D}) = (N_{(k^{mn})_{D'}}, \psi_{(k^{mn})_{D'}}). 
\]
\end{Rem}

The Whittaker support of the Speh representations is already calculated. 

\begin{Thm}[see \cite{CFK} Section 2 and references there]\label{thm:WS of speh}
\begin{enumerate}
\item We have $\mathrm{WS}(\Speh(\tau,n))=\{(k^n)\}$. 
\item $\dim \Speh(\tau,n)_{(k^n)}=1$. In other words, the Speh representation supports unique models of degenerate type. 
\end{enumerate}
\end{Thm}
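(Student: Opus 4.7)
The plan is to prove both parts by reducing to the case where $\tau\in\scd_k$ is a discrete series representation, and then propagating through the Langlands-Tadi\'c product decomposition used to define $\Speh(\tau,n)$ for general irreducible generic unitary $\tau$. When $\tau$ is discrete series, $\Speh(\tau,n)=u(\tau,n)$ is realized as the Langlands quotient of the standard module $\Pi=\nu^{(n-1)/2}\tau\times\cdots\times\nu^{(1-n)/2}\tau$. I would first bound the Whittaker support from above: applying the Bernstein-Zelevinsky geometric lemma, I compute the Jacquet module of $\Pi$ along each standard parabolic associated to a composition of $kn$, and show that after twisting by the degenerate character attached to a partition $\lam$ not dominated by $(k^n)$ the resulting coinvariants vanish. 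Combined with the M{\oe}glin-Waldspurger theorem (Theorem~\ref{thm:relation whittaker and character expansion}) in the non-Archimedean case, and with the partial results of \cite{GS15,GGS17} in the Archimedean case, this forces $\mathrm{WS}(u(\tau,n))\subseteq\{(k^n)\}$.

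To produce a nonzero element of $\Hom_{N_{(k^n)}}(u(\tau,n),\psi_{(k^n)})$ and verify uniqueness, I would use Frobenius reciprocity along the parabolic of type $(k,\ldots,k)$: a Whittaker functional on $\tau$, unique up to scalar since $\tau$ is generic, extends by an unfolding integral on the unipotent radical of $\Pi$ to an $(N_{(k^n)},\psi_{(k^n)})$-equivariant functional, which one then shows descends to the Langlands quotient. Uniqueness reduces to that of the Whittaker model of $\tau$, with an exchange-of-roots argument ensuring that no additional double coset contributes. In the $p$-adic case, multiplicity one alternatively follows from part (2) of Theorem~\ref{thm:relation whittaker and character expansion}, since the leading character coefficient for the rectangular orbit is known to equal one.

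For general irreducible generic unitary $\tau=\tau_1\nu^{s_1}\times\cdots\times\tau_m\nu^{s_m}$ with $\tau_i\in\scd_{k_i}$, the construction gives $\Speh(\tau,n)=\Speh(\tau_1,n)\nu^{s_1}\times\cdots\times\Speh(\tau_m,n)\nu^{s_m}$, which is irreducible by \cite{MW89}. By the previous steps each factor has Whittaker support $\{(k_i^n)\}$ with multiplicity one, and I would conclude by invoking the compatibility of rectangular degenerate models with parabolic induction: the unipotent subgroup $N_{(k^n)}$ with $k=\sum k_i$ meshes blockwise with the standard Levi of type $(k_1n,\ldots,k_mn)$, so a Mackey/unfolding computation isolates a single nonvanishing double coset and propagates both the vanishing on strictly larger orbits and the multiplicity one.

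The main obstacle is precisely this last propagation step, since naive parabolic induction can introduce much larger Whittaker orbits; controlling the contributing double cosets and ruling out partitions strictly larger than $(k^n)$ requires either the exchange-root technique of Ginzburg-Rallis-Soudry or a direct Jacquet-module estimate on the full induced module. In the Archimedean case one must additionally substitute for the missing analog of Theorem~\ref{thm:relation whittaker and character expansion} by using the wavefront-set estimates from \cite{GGS17} and irreducibility of the induction to transfer the bound from each $\Speh(\tau_i,n)$ to the product.
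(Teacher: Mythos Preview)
The paper does not supply its own proof of this theorem: it is stated with the attribution ``see \cite{CFK} Section 2 and references there'' and is quoted as background. There is therefore no in-paper argument to compare your proposal against; the author is simply importing a known result from the literature.

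That said, your outline is broadly in line with how this result is established in the references. In the non-Archimedean case the standard route is Zelevinsky's theory of derivatives: for $\tau$ discrete series one shows that the highest nonzero derivative of $u(\tau,n)$ is the $k$-th one and that it is one-dimensional (a shifted trivial representation), which is exactly the Jacquet-module computation you describe recast in derivative language; the identification $c_{(k^n)}=\dim\mathrm{Wh}_{(k^n)}$ via M{\oe}glin--Waldspurger then gives multiplicity one. Your propagation to general generic unitary $\tau$ via the product decomposition and a single-coset Mackey argument is also the standard step, and you correctly flag that controlling the extra double cosets is where the work lies; in practice this is handled either by the exchange-of-roots technique you mention or, more directly, by the Leibniz rule for derivatives of a product. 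In the Archimedean case the references rely on the associated-variety and wavefront bounds of \cite{GGS17} together with explicit constructions of the degenerate functional, as you indicate.

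One small caution: your remark that ``the leading character coefficient for the rectangular orbit is known to equal one'' is circular as stated, since that coefficient is identified with the multiplicity you are trying to compute; the actual input is the one-dimensionality of the top derivative, from which $c_{(k^n)}=1$ then follows.
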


\begin{Rem}
We emphasize that the first statement consists of the the following two substatements: 
\begin{itemize}
\item For every nilptoent orbit $\mathcal{O}$ larger than $(k^n)$, $\pi_{\mathcal{O}}=0$;
\item $\pi_{(k^n)}\neq 0$. 
\end{itemize}
In some papers, it is said that the nilpotent orbit attached to $\Speh(\tau,n)$ is $(k^n)$. 
\end{Rem}

\subsection{The construction}

Speh representations over $D$ are defined as the Jacquet-Langlands transfer of the Speh representations. 

\begin{Def}
For $\tau\in \Irr^{eu}_{gen}(G_k)$ and a central division algebra $D$, we define
\[
\Speh_D(\tau,n)=|\LJ|(\Speh_F(\tau,nd)),
\]
where $|\LJ|$ is the Jacquet-Langlands correspondence from $\GL_{knd}$ to $\GL_{kn,D}$.
\end{Def}

\begin{Rem}
For a positive integer $m$, we also define
\[
\Speh_{\mathrm{M}_m(D)}(\tau,n)=\Speh_D(\tau, m n). 
\]
This is used in the global construction. 
\end{Rem}

\begin{Rem}
This construction is local-to-global compatible since the Jacquet-Langlands correspondence in \cite{Badulescu08} is local-to-global compatible.
\end{Rem}

We observe that, from Proposition \ref{prop:transfer of unitary non-arch} and the definition of $|\LJ|$, $\Speh_F(\tau,nd)$ is always $d$-compatible. Explicit constructions of $\Speh_D(\tau,n)$ can also be written done using the Aubert involution.


We would like to understand the Whittaker support of $\Speh_D(\tau,n)$. In the non-Archimedean case, this can be done using the character identity and Theorem \ref{thm:relation whittaker and character expansion}. In the Archimedean case, we also prove some partial results. We will prove further results using global methods in Section \ref{sec:Archimedean using global}. 

\subsection{Sign in the character identity}

From \cite{Badulescu08} Proposition 3.9, the representations $\Speh_D(\tau,n)$ and $\Speh_F(\tau,nd)$ satisfy a character identity
\[
\chi_{\Speh_F(\tau,nd)}(g)=\varepsilon\chi_{\Speh_D(\tau,n)}(g')
\]
with $\varepsilon\in\{\pm 1\}$ and $g\leftrightarrow g'$. We first show that $\varepsilon=1$.

\begin{Prop}\label{prop:sign in character identity}
We have
\[
\chi_{\Speh_F(\tau,nd)}(g)=\chi_{\Speh_D(\tau,n)}(g')
\]
for all $g\in \GL_{nkd}$ and $g\in \GL_{nk,D}$ such that $g\leftrightarrow g'$.
\end{Prop}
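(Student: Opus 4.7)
The plan is to compute the sign $\varepsilon$ by unwinding the definition of $|\LJ|$ and combining it with the extended character identity for $\LJ$ on Grothendieck groups. Writing $\LJ(\Speh(\tau,nd)) = \delta\cdot\Speh_D(\tau,n)$ with $\delta\in\{\pm 1\}$, the identity
\[
\chi_{\Speh(\tau,nd)}(g) = (-1)^{knd-kn}\chi_{\LJ(\Speh(\tau,nd))}(g')
\]
reduces the proposition to establishing $\delta = (-1)^{knd-kn} = (-1)^{kn(d-1)}$.

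The first step is to reduce to the case where $\tau$ is a single discrete series. Writing $\tau = \tau_1\nu^{s_1}\times\cdots\times\tau_m\nu^{s_m}$ with $\tau_i\in \scd_{l_i}^u$, the construction of the Speh representations gives
\[
\Speh(\tau,nd) = u(\tau_1,nd)\nu^{s_1}\times\cdots\times u(\tau_m,nd)\nu^{s_m}.
\]
Since $\LJ$ commutes with parabolic induction, and since $|\det g| = |\mathrm{Nrd}\, g'|$ for matching elements $g\leftrightarrow g'$, unramified twists pass through the character identity unchanged; hence $\delta$ factorizes as $\prod_i \delta_i$, where $\delta_i$ is the corresponding sign for $u(\tau_i,nd)$. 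Because $(-1)^{kn(d-1)} = \prod_i (-1)^{l_i n(d-1)}$, it suffices to verify $\delta_i = (-1)^{l_i n(d-1)}$ factor by factor, which is the discrete series case.

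Now take $\tau\in\scd_l^u$ and write $\tau = Z^u(\rho,m)$ with $\rho\in\scc_q^u$ and $l = mq$. Since $d\mid nd\cdot q$ automatically, part (2) of Proposition \ref{prop:transfer of unitary non-arch} gives
\[
\LJ(u(\tau,nd)) = \varepsilon'\cdot|i'(\bar u(\tau',m))|,
\]
with $\tau' = \C(Z^u(\rho,nd))$, and with $\varepsilon' = 1$ when $s := d/\gcd(d,q)$ is odd and $\varepsilon' = (-1)^{mnd/s}$ when $s$ is even. By the very definition of $|\LJ|$ the right hand side is $\varepsilon' \cdot \Speh_D(\tau,n)$, so $\delta = \varepsilon'$, and what remains is the parity identity $\varepsilon' = (-1)^{mqn(d-1)}$. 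This is a short case analysis: if $s$ is odd, then either $d$ is odd (so $d-1$ is even), or $d$ is even, in which case $s$ odd forces $\gcd(d,q)$ to contain the full $2$-part of $d$, so $q$ is even; either way $mqn(d-1)$ is even. If $s$ is even, then $d$ is even and $s\gcd(d,q) = d$, so $mnd/s = mn\gcd(d,q)$, and the congruence $mn\gcd(d,q)\equiv mqn(d-1)\pmod 2$ is routine to check by separating on the parity of $q$. An entirely parallel computation based on the explicit formulas recalled in Section \ref{sec:JL Archimedean} handles the Archimedean case. The main practical obstacle is the bookkeeping of signs coming from three different sources -- the $(-1)^{knd-kn}$ in the Grothendieck-level identity, the $\varepsilon'$ from Proposition \ref{prop:transfer of unitary non-arch}, and the multiplicative factors from the reduction to discrete series; once these are aligned, the proposition falls out of the parity check.
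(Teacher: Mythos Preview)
Your proposal is correct and follows essentially the same route as the paper: reduce to the discrete series case, invoke part~(2) of Proposition~\ref{prop:transfer of unitary non-arch} to isolate the sign $\varepsilon'$, and then verify the parity identity $\varepsilon' = (-1)^{kn(d-1)}$ by a short case analysis on whether $s$ is odd or even. Your organization of the parity check (via $s = d/\gcd(d,q)$ and separating on the parity of $q$) is a slight repackaging of the paper's argument, and your justification of the reduction to discrete series is a bit more explicit than the paper's, but the substance is the same.
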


\begin{proof}

We first treat the non-Archimedean case. Recall that $\varepsilon$ is the product of $(-1)^{kn(d-1)}$ and the sign coming from the definition of $\LJ$. 

It suffices to prove the result when $\tau$ is a discrete series representation. In this case, we assume that $\tau=Z^u(\rho,l)$ for $\rho\in \scc_{p}$ where $k=pl$. We only need to use Proposition \ref{prop:transfer of unitary non-arch} (2) to calculate $\LJ(\Speh(\tau,nd))=\LJ(u(\tau,nd))$. 

Let $\sigma=Z^u(\rho,nd)$. Let $s$ be the smallest integer such that $d\mid sp$. Let $\sigma'=\mathbf{C}(\sigma)$. Then
\[
\LJ(u(\tau,nd))=\varepsilon | i'(\bar u (\sigma',l)) |,
\]
where $\varepsilon=1$ if $s$ is odd and $\varepsilon=(-1)^{\frac{ndl}{s}}$ if $s$ is even. 

We now calculate the sign in the character relation. We first consider the case when $s$ is odd. We need to show that $2\mid kn(d-1)$. We write 
\[
kn(d-1)=\dfrac{sp}{d} \dfrac{dnl}{s} (d-1)
\]
Note that both $sp/d$ and $d/s$ are integers. We now have two cases:
\begin{itemize}
\item $d-1$ is even: then the result is true. 
\item $d$ is even: since $s$ is odd and $d$ is even, $dnl/s$ must be an even integer. The result is true as well.  
\end{itemize}

We now consider the case when $s$ is even. We need to show that $2 \mid \frac{dnl}{s}-kn(d-1)$. Note that $d$ must be even. Therefore, it suffices to show that 
\[
2\mid \dfrac{dn}{s}-pn \text{ or } 2\mid \dfrac{(d-sp)n}{s}.
\]
We now write $sp=da$. We claim that $a$ must be odd. Otherwise, let $a=2a'$. Then $d\mid sp/2$. This contradicts with our choice of $s$. Therefore, $a$ is an odd integer and 
\[
\dfrac{(d-sp)n}{s}=\dfrac{d}{s}n(1-a)
\]
is an even integer. This completes the proof for the non-Archimedean case.

The Archimedean case is easier. It suffices to prove the result for the discrete series representations. Recall that for $\tau\in \mathscr{D}_k$,
\[
\chi_{\Speh(\tau,2n)}(g)= (-1)^{2kn-kn} \chi_{\LJ(\Speh(\tau,2n))}(g') = (-1)^{kn} \chi_{\LJ(\Speh(\tau,2n))}(g').
\]
By Section \ref{sec:JL Archimedean}, we have the following:
\begin{itemize}
\item $k=1$: we have that $\LJ(\Speh(\tau,2n))=(-1)^{n}|\LJ|(\Speh(\tau,2n))$.
\item $k=2$: we have that  $\LJ(\Speh(\tau,2n))=|\LJ|(\Speh(\tau,2n))$. 
\end{itemize}
This proves that if $\tau$ is a discrete series representation of either $\mathrm{GL}_1(\mathbb{R})$ or $\mathrm{GL}_2(\mathbb{R})$, then
\[
\chi_{\Speh(\tau,2n)}(g)=\chi_{\Speh_{\mathbb{H}}(\tau,n)}(g')
\]
for $g\leftrightarrow g'$. 
\end{proof}

\subsection{Whittaker supports in non-Archimedean case}\label{sec: Whittaker support non-Arch}

In this section, we determine $\mathrm{WS}(\Speh_D(\tau,n))$ and the dimension of the corresponding model in the non-Archimedean case. This is an easy consequence of the main result of \cite{Prasad00} and Proposition \ref{prop:sign in character identity}. The main idea is to use the character identity and a result of M{\oe}glin-Waldspurger \cite{MW87} and Varma \cite{Varma14}.

For an admissible representation $\pi$ of $G$, its character $\chi_{\pi}$ admits the following character expansion at identity
\[
\chi_{\pi}(\mathbf{e}(Y))=\sum_{\sco}c_{\sco}\widehat\mu_{\sco}(Y)
\]
valid for all regular semisimple $Y$ in the lie algebra $\mathfrak{g}$ such that $Y$ is close enough to $0$. Here, the sum is over the set of nilpotent orbits $\mathcal{O}$ in $\mathfrak{g}$; $\widehat\mu_{\mathcal{O}}$ is the function that represents the distribution that is the Fourier transform of the orbital integral $\mu_{\mathcal{O}}$ associated to $\mathcal{O}$; $c_{\mathcal{O}}=c_{\mathcal{O}}(\pi)\in \mathbb{C}$; and $\mathbf{e}$ is the exponential map, or some suitable substitute. 

\begin{Prop}\label{prop:Prasad result}
Suppose that $\pi\in \Irr_{kd}$ and $\pi'\in \Irr_{k,D}$. We consider that the character expansion at the identity:
\[
\chi_{\pi}=\sum_{\sco}c_{\sco}\widehat\mu_{\sco},
\]
and 
\[
\chi_{\pi'}=\sum_{\sco'}c_{\sco'}\widehat\mu_{\sco'}.
\]

If the characters satisfy the following character identity
\[
\chi_{\pi}=\varepsilon_{\pi}\chi_{\pi'},
\]
then for $\mathcal{O}\leftrightarrow \mathcal{O}'$, 
\[
c_{\mathcal{O}}=\varepsilon_{\pi} \cdot c_{\mathcal{O}'}. 
\]
\end{Prop}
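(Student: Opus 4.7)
The plan is to deduce this statement directly from the comparison of local character expansions under the Jacquet--Langlands correspondence, following Prasad \cite{Prasad00}. The character identity $\chi_\pi = \varepsilon_\pi \chi_{\pi'}$ holds on the set of pairs of regular semisimple elements $g \leftrightarrow g'$. Since this includes elements arbitrarily close to the identity (via the exponential map or a suitable substitute $\mathbf{e}$), one can pull the identity back to a matching of character expansions on a small neighborhood of $0$ in the corresponding Lie algebras. Explicitly, one writes
\[
\sum_{\mathcal{O}} c_{\mathcal{O}}\,\widehat{\mu}_{\mathcal{O}}(Y) \;=\; \varepsilon_\pi \sum_{\mathcal{O}'} c_{\mathcal{O}'}\,\widehat{\mu}_{\mathcal{O}'}(Y')
\]
for all regular semisimple $Y \in \mathfrak{g}_{kd}$ and $Y' \in \mathfrak{g}_{k,D}$ with $\mathbf{e}(Y) \leftrightarrow \mathbf{e}(Y')$, i.e.\ $Y$ and $Y'$ have the same characteristic polynomial.

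The crucial input from \cite{Prasad00} is that the Fourier transforms of nilpotent orbital integrals transfer compatibly with the Jacquet--Langlands matching: for corresponding orbits $\mathcal{O} \leftrightarrow \mathcal{O}'$, one has an identity of the form
\[
\widehat{\mu}_{\mathcal{O}}(Y) \;=\; \widehat{\mu}_{\mathcal{O}'}(Y')
\]
(with the normalizations chosen in \cite{Prasad00}) for $Y \leftrightarrow Y'$, while $\widehat{\mu}_{\mathcal{O}}(Y)$ vanishes on $Y$ such that no $Y'$ corresponds to it (equivalently, orbits on the $G_{kd}$ side that do not come from the division algebra side contribute $0$ to the matching). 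Substituting this into the two expansions reduces the identity above to
\[
\sum_{\mathcal{O}\leftrightarrow \mathcal{O}'} \bigl(c_{\mathcal{O}} - \varepsilon_\pi c_{\mathcal{O}'}\bigr)\,\widehat{\mu}_{\mathcal{O}}(Y) \;=\; 0
\]
on the matching locus. The linear independence of the germs $\widehat{\mu}_{\mathcal{O}}$ near $0$ (Howe's theorem, or Harish-Chandra--Howe for $p$-adic Lie algebras, restricted to orbits that come from $\mathfrak{g}_{k,D}$) then forces $c_{\mathcal{O}} = \varepsilon_\pi\, c_{\mathcal{O}'}$ for every corresponding pair $\mathcal{O} \leftrightarrow \mathcal{O}'$.

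The main point to be careful about is the sign and normalization conventions: Prasad's matching for $\widehat{\mu}_{\mathcal{O}}$ involves a sign factor (essentially $(-1)^{kd-k}$, the same sign appearing in \eqref{eq:sign in LJ for Grothendieck}) coming from the difference in Haar measure normalizations between $G_{kd}$ and $G_{k,D}$, and one has to verify that this sign is already absorbed into $\varepsilon_\pi$ in our formulation so that no extra factor appears in the final relation $c_{\mathcal{O}} = \varepsilon_\pi c_{\mathcal{O}'}$. Once the conventions are aligned with those of \cite{Prasad00} (and of \cite{Badulescu08}, which we have been following), the statement is essentially a formal consequence of the character-expansion identity together with the linear independence of Fourier transforms of nilpotent orbital integrals.
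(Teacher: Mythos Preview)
Your proposal is correct and follows the same approach as the paper: both defer to \cite{Prasad00}, noting that Prasad's argument uses only the character identity and hence carries over verbatim. The paper's proof is in fact a single sentence to this effect, whereas you have unpacked the mechanism (matching of $\widehat{\mu}_{\mathcal{O}}$'s plus linear independence of germs); one small imprecision is your parenthetical about $\widehat{\mu}_{\mathcal{O}}$ ``vanishing on $Y$ such that no $Y'$ corresponds to it''---the issue is rather about orbits $\mathcal{O}$ on the $G_{kd}$ side that do not come from $G_{k,D}$, and Prasad handles this via homogeneity and linear independence on the $D$-side rather than by any such vanishing---but this does not affect the validity of the overall argument.
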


\begin{proof}
The proof in \cite{Prasad00} only uses the character identity and therefore applies to our case as well. 
\end{proof}

We now prove the following result. 

\begin{Thm}\label{thm:non-Arch orbit}
We have $\mathrm{WS}(\Speh_D(\tau,n))=\{(k^n)_D\}$. Moreover,
\[
\dim \Hom_{N_{(k^n)_D}}(\Speh_D(\tau,n),\psi_{(k^n)_D})=1.
\]
\end{Thm}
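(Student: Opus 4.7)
The strategy is to transport the known Whittaker support of the classical Speh representation across the character identity. By Proposition \ref{prop:sign in character identity} we have the equality
\[
\chi_{\Speh_F(\tau,nd)}(g) = \chi_{\Speh_D(\tau,n)}(g')
\]
for all $g \leftrightarrow g'$ regular semisimple, and the sign on the right is $+1$. Applying Proposition \ref{prop:Prasad result} with $\varepsilon_{\pi}=1$, for any pair of corresponding nilpotent orbits $\mathcal{O}\leftrightarrow\mathcal{O}'$ with $\mathcal{O}\subset\mathfrak{g}_{kn,D}^{*}$ and $\mathcal{O}'\subset\mathfrak{g}_{knd}^{*}$, the leading coefficients in the local character expansion satisfy
\[
c_{\mathcal{O}}(\Speh_D(\tau,n)) = c_{\mathcal{O}'}(\Speh_F(\tau,nd)).
\]

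Next I would feed in the input from Theorem \ref{thm:WS of speh}: namely that $\mathrm{WS}(\Speh_F(\tau,nd))=\{(k^{nd})\}$ and $c_{(k^{nd})}(\Speh_F(\tau,nd))=1$. Under the orbit correspondence recalled in Section \ref{sec:preliminaries}, one has $(k^n)_D \leftrightarrow (k^{nd})$, so the equality of coefficients gives immediately $c_{(k^n)_D}(\Speh_D(\tau,n))=1$. For any orbit $\mathcal{O}$ on the $D$-side strictly greater than $(k^n)_D$, the corresponding $\mathcal{O}'$ is strictly greater than $(k^{nd})$ in the dominance order (the correspondence is simply multiplication of each part by $d$, which is order-preserving and strict-order-preserving); since $(k^{nd})$ is already the unique element of $\mathrm{WF}(\Speh_F(\tau,nd))$, we conclude $c_{\mathcal{O}'}=0$ and hence $c_{\mathcal{O}}(\Speh_D(\tau,n))=0$. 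Combined with the nonvanishing at $(k^n)_D$, this identifies $\mathrm{WF}(\Speh_D(\tau,n))=\{(k^n)_D\}$.

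Finally I would translate back to Whittaker quotients using Theorem \ref{thm:relation whittaker and character expansion}, which in the non-Archimedean setting identifies $\mathrm{WF}$ with $\mathrm{WS}$ and, at any top orbit, identifies $c_{\mathcal{O}}(\pi)$ with $\dim\pi_{\mathcal{O}}$. Hence $\mathrm{WS}(\Speh_D(\tau,n))=\{(k^n)_D\}$ and
\[
\dim\Hom_{N_{(k^n)_D}}\bigl(\Speh_D(\tau,n),\psi_{(k^n)_D}\bigr)
=\dim\Speh_D(\tau,n)_{(k^n)_D}
=c_{(k^n)_D}(\Speh_D(\tau,n))
=1.
\]

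The argument is almost entirely bookkeeping once Proposition \ref{prop:sign in character identity} (the sign being trivial) is in hand, so there is no real obstacle. The only step that warrants care is verifying that the orbit correspondence $\mathcal{O}\leftrightarrow\mathcal{O}'$ preserves strict dominance order, which I would do by observing that writing partitions with repetition, the correspondence $(k_1\cdots k_m)_D\leftrightarrow(k_1^d\cdots k_m^d)$ amounts to repeating each part $d$ times, and the dominance partial order is evidently preserved and reflected under this operation.
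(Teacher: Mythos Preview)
Your argument is essentially the paper's own proof: transfer the character-expansion coefficients via Proposition~\ref{prop:Prasad result} (with the sign from Proposition~\ref{prop:sign in character identity}), read off the answer from Theorem~\ref{thm:WS of speh}, and convert back using Theorem~\ref{thm:relation whittaker and character expansion}. The paper phrases the vanishing step as a proof by contradiction, but the content is identical.

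One small point to tighten: in the body of the argument you only treat orbits $\mathcal{O}$ \emph{strictly greater} than $(k^n)_D$, and then assert this ``identifies $\mathrm{WF}(\Speh_D(\tau,n))=\{(k^n)_D\}$.'' That inference is not quite complete---ruling out strictly larger orbits shows $(k^n)_D$ is maximal, but not that it is the \emph{only} maximal element of $\mathrm{WF}$. You must also exclude orbits $\mathcal{O}$ that are \emph{incomparable} to $(k^n)_D$ (the paper's proof handles ``greater than or not comparable with'' in one breath). The fix is exactly the order-reflection you mention in your last paragraph: if $\mathcal{O}\not\leq(k^n)_D$ then $\mathcal{O}'\not\leq(k^{nd})$, and since $\mathrm{WF}(\Speh_F(\tau,nd))=\{(k^{nd})\}$ forces every orbit with nonzero coefficient on the $F$-side to lie in the closure of $(k^{nd})$, we get $c_{\mathcal{O}'}=0$ and hence $c_{\mathcal{O}}=0$. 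State this explicitly and the proof is complete.
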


\begin{proof}
This is an application of the above result and Theorem \ref{thm:relation whittaker and character expansion}. We first prove the vanishing part. Assume that there is an orbit $\mathcal{O}'$ that is greater than or not comparable with $(k^n)_D$ such that $\mathcal{O}'  \in \mathrm{WS}(\Speh_D(\tau,n))$. By Theorem \ref{thm:relation whittaker and character expansion}, $c_{\mathcal{O}'} \neq 0$. By Proposition \ref{prop:Prasad result}, $c_{\mathcal{O}} \neq 0$ for $\mathcal{O} \leftrightarrow \mathcal{O'}$. Note that $\mathcal{O}$ is either greater than or not comparable with $(k^{nd})$. Theorem \ref{thm:relation whittaker and character expansion} implies that $\mathcal{O} \in \mathrm{WS}(\Speh(\tau,nd))$. This is a contradiction. 

Similarly, we deduce that $c_{(k^n)_D}=1$ and the result now follows immediately.
\end{proof}

\begin{Rem}
The following example explains why the usual Jacquet-Langlands correspondence $\C$ does not work properly.  Assume $D$ is the unique non-split quaternion algebra over a local field $F$. Let $\mathrm{St}$ be the Steinberg representation. Then $\C(\mathrm{St})=1_{D^{\times}}$. The character relation reads 
\[
\chi_{\mathrm{St}}(g) = - \chi_{1_{D^{\times}}} (g')
\]
for $g \leftrightarrow g'$. The character expansion of $\chi_{\mathrm{St}}$ is 
\[
\chi_{\mathrm{St}} = \hat\mu_{(2)} - \hat \mu_{(1^2)}. 
\]
Since the Steinberg representation has a unique Whittaker model, $c_{(2)}=1$. But this is not related to the fact that $1_{D^{\times}}$ being one-dimensional. 

The Steinberg representation will not be considered in the domain of $|\LJ|$ for the representations we considered. Under $|\LJ|$, the trivial representation of $\GL_2(F)$ corresponds to the trivial representation of $D^\times$. In this case, the fact of $1_{\GL_2}$ being one-dimensional is related to the nilpotent orbit of $1_{D^{\times}}$. 
\end{Rem}

\subsection{Partial results in the Archimedean case}\label{sec:Arch I}

The proof in the previous section does not work well in the Archimedean case. This is because the Archimedean version of Theorem \ref{thm:relation whittaker and character expansion} is unknown (see the discussion of \cite{GS19}). However, one direction is shown in \cite{GS15}.

We first recall some basics. Let $G$ be a reductive Lie group with maximal compact subgroup $K$, and let $G_{\mathbb{C}}$ and $K_{\mathbb{C}}$ be the complexification of $G$ and $K$. Denote $\mathfrak{g}, \mathfrak{k}, \mathfrak{g}_{\mathbb{C}}$ and $\mathfrak{k}_{\mathbb{C}}$ be the Lie algebra of $G, K, G_{\mathbb{C}}$ and $K_{\mathbb{C}}$, respetively. Let $\pi$ be an irreducible admissible representation $\pi$ of $G$. We briefly recall the two invariants of cycles associated with $\pi$, one defined analytically and the other algebraically. 

For $\pi$, one has an asymptotic expansion for the character $\chi_{\pi}$ in a neighborhood of $0$ in $\mathfrak{g}$ of the form 
\[
\chi_{\pi} \sim \sum_{i=-r}^{\infty} D_i
\]
with $\{D_i\}$ being a set of tempered distributions on $\mathfrak{g}$. The asymptotic support $\mathrm{AS}(\chi_{\pi})\subset \mathfrak{g}^{\ast}$ is defined to be the union of the supports of the Fourier transforms $\hat D_i$. It is known that $\mathrm{AS}(\chi_{\pi})$ is a union of nilpotent orbits. We can view $\mathrm{AS}(\chi_{\pi})$ as a subset of $\mathfrak{g}$ by identifying $\mathfrak{g}$ and $\mathfrak{g}^{\ast}$ by the Cartan-Killing form. We set 
\[
\mathcal{N}_{\mathrm{tr}}(\pi):=\{\mathcal{O} \in \mathcal{N}: \mathcal{O} \subset \mathrm{AS}(\chi_{\pi})\},
\]
and set $\mathcal{N}_{\mathrm{tr}}^{\mathrm{max}}(\pi)$ to be the subset of maximal elements. 

For a smooth representation $\pi$ of a real reductive group $G$, one can define another invariant $\mathrm{AV}(\pi)$ -- the annihilator variety of $\pi$. It is sometimes called the associated variety of the annihilator of $\pi$. It is defined to be the set of zeros in $\fg_{\bC}^{\ast}$ of the ideal in the symmetric algebra $S(\fg_{\bC})$, which is generated by the symbols of the annihilator ideal of $\pi$ in the universal enveloping algebra. 
A result of Kostant--Rallis \cite{KR71} says that $\mathrm{AV}(\pi)$ is a finite union of nilpotent orbit $(\mathfrak{g}_{\mathbb{C}}/\mathfrak{k}_{\mathbb{C}})^{\ast}$. Identifying coadjoint orbits with adjoint orbits, and using the Sekiguchi correspondence to identify the nilpotent $K_{\mathbb{C}}$-orbits in $(\mathfrak{g}_{\mathbb{C}}/\mathfrak{k}_{\mathbb{C}})^{\ast}$ with the nilpotent $G$-orbits in $\mathfrak{g}^{\ast}$, we define the set 
\[
\mathcal{N}_{\mathrm{alg}}(\pi):=\{\mathcal{O} \in \mathcal{N} : \mathcal{O} \subset \mathrm{AV}(\pi)\}, 
\]
and let $\mathcal{N}_{\mathrm{alg}}^{\mathrm{max}}(\pi)$ be the subset of maximal elements. It follows from \cite{SV00} that $\mathcal{N}_{\mathrm{tr}}^{\mathrm{max}}(\pi)=\mathcal{N}_{\mathrm{alg}}^{\mathrm{max}}(\pi)$. 

\begin{Prop}
For $\mathcal{O'} \in \mathrm{WS}(\Speh_D(\tau,n))$, $\mathcal{O'}$ is a subset of the closure of $(k^n)_D$. 
\end{Prop}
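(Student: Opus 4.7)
The plan is to import the known Whittaker support computation for $\Speh(\tau,nd)$ from Theorem~\ref{thm:WS of speh} into the quaternionic setting via the character identity of Proposition~\ref{prop:sign in character identity}, compensating for the missing Archimedean Moeglin--Waldspurger--Varma theorem by invoking the partial results of \cite{GS15}. The point of departure is the identity
\[
\chi_{\Speh(\tau,nd)}(g) = \chi_{\Speh_D(\tau,n)}(g'), \qquad g \leftrightarrow g'.
\]

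The first task is to formulate an Archimedean analogue of Proposition~\ref{prop:Prasad result}. Prasad's argument is essentially formal: it rests only on the matching of the regular semisimple loci under the Jacquet--Langlands correspondence and on the fact that the Fourier transforms $\widehat\mu_{\mathcal{O}}$ of nilpotent orbital integrals are invariant distributions determined by their regular-semisimple values. Substituting the Harish-Chandra--Barbasch--Vogan asymptotic expansion $\chi_\pi \sim \sum_{\mathcal{O}} c_{\mathcal{O}}(\pi)\widehat\mu_{\mathcal{O}}$ near the identity into the character identity and using the linear independence of $\{\widehat\mu_{\mathcal{O}}\}$, one expects to deduce $c_{\mathcal{O}}(\Speh(\tau,nd)) = c_{\mathcal{O}'}(\Speh_D(\tau,n))$ for every pair $\mathcal{O} \leftrightarrow \mathcal{O}'$, and $c_{\mathcal{O}}(\Speh(\tau,nd)) = 0$ whenever $\mathcal{O}$ is not $d$-compatible.

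Next, one identifies $\mathcal{N}_{\mathrm{tr}}^{\mathrm{max}}(\Speh(\tau,nd)) = \{(k^{nd})\}$. The associated variety of the classical Speh representation is the single orbit $(k^{nd})$ (via the Sekiguchi correspondence and the well-known computations for ladder representations), so $\mathcal{N}_{\mathrm{alg}}^{\mathrm{max}}(\Speh(\tau,nd)) = \{(k^{nd})\}$, and by \cite{SV00} the same holds for $\mathcal{N}_{\mathrm{tr}}^{\mathrm{max}}$. Combining this with the previous step and the order-preserving property of the orbit correspondence---namely $(k_1^d\cdots k_m^d) \leq (k^{nd})$ if and only if $(k_1\cdots k_m)_D \leq (k^n)_D$---one concludes $\mathcal{N}_{\mathrm{tr}}^{\mathrm{max}}(\Speh_D(\tau,n)) \subset \overline{(k^n)_D}$. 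The partial Archimedean result of \cite{GS15} then gives $\mathrm{WS}(\Speh_D(\tau,n)) \subset \overline{(k^n)_D}$, which is the desired claim.

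The main obstacle is the first step. The $p$-adic argument in \cite{Prasad00} exploits the equality of the character with its germ expansion in an honest neighborhood of the identity, whereas in the Archimedean case one only has an asymptotic expansion. Recovering the coefficient match requires invoking the Shelstad-type linear independence of the distributions $\widehat\mu_{\mathcal{O}}$ on the regular semisimple locus and carefully checking that $\widehat\mu_{\mathcal{O}}$ restricts to a nonzero distribution on the matching locus precisely when $\mathcal{O}$ is $d$-compatible. Once these analytic points are settled, the remainder of the argument reduces to a straightforward combinatorial manipulation of partitions.
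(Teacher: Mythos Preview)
Your last step invokes \cite{GS15} in the wrong direction. What \cite{GS15} (Theorem~B) and \cite{GGS17} provide is the implication $\mathcal{N}_{\mathrm{tr}}^{\max}(\pi)\subset\mathrm{WO}(\pi)$: a maximal orbit in the asymptotic support of the character produces a nonvanishing degenerate Whittaker functional. You need the opposite containment, namely that every orbit in $\mathrm{WS}$ lies in the closure of the wave-front set. That direction is Matumoto's theorem \cite{Matumoto87} (combined with \cite{SV00} to pass between $\mathcal{N}_{\mathrm{alg}}^{\max}$ and $\mathcal{N}_{\mathrm{tr}}^{\max}$), applied on the $D$-side. So your argument can be repaired, but only by swapping the citation; as written, the final implication is unjustified.

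This also highlights a structural difference from the paper's proof. The paper runs the chain in the opposite direction: starting from $\mathcal{O}'\in\mathrm{WS}(\Speh_D(\tau,n))$, it applies Matumoto on the $D$-side to land in $\mathrm{AV}(\Speh_D(\tau,n))$, passes to $\mathcal{N}_{\mathrm{tr}}^{\max}$ via \cite{SV00}, transfers to the split side by the Archimedean Prasad argument, and only then applies \cite{GS15} on $\GL_{knd}$ to obtain $\mathcal{O}\in\mathrm{WO}(\Speh(\tau,nd))$, contradicting Theorem~\ref{thm:WS of speh}. In particular the paper never needs to compute $\mathrm{AV}(\Speh(\tau,nd))$ directly; it uses only the known Whittaker-support statement for the split Speh representation. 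Your route requires that annihilator-variety computation as additional input, and then still needs Matumoto on $\GL_{kn,\mathbb{H}}$ at the end---so it is not more economical, and the miscitation of \cite{GS15} is a genuine gap.
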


\begin{proof}
Let us prove this result by contradiction. If $\mathcal{O'}$ is not a subset of the closure of $(k^n)_D$, then $\mathcal{O'}$ is either greater than or not comparable with $(k^n)_D$. In either case, $\mathcal{O'}$ is of the form $(k_1\cdots)$ with $k_1>k$. Recall that the definition of $\mathrm{WS}(\Speh_D(\tau,n))$ guarantees that $\mathcal{O'}$ is a maximal nilpotent orbit that support degenerate Whittaker models for $\Speh_D(\tau,n)$. 

Corollary 4 in \cite{Matumoto87}  states if $W_{\mathcal{O'}}(\pi) \neq 0$, then $\mathcal{O'} \subset \mathrm{AV}(\pi)$. Without loss of generality, we may assume that $\mathcal{O'}\in \mathcal{N}_{\mathrm{alg}}^{\mathrm{max}}(\pi)$. Then $\mathcal{O'}\in \mathcal{N}_{\mathrm{tr}}^{\mathrm{max}}(\pi)$ as well and therefore $c_{\mathcal{O}'} \neq 0$.  

The Archimedean analogue of Prasad also holds. As a consequence, from the character identity, we deduce that $c_{\mathcal{O}}\neq 0$ when $\mathcal{O}\leftrightarrow \mathcal{O}'$. Here, $c_{\mathcal{O}}$ are the coefficients appearing in the character expansion for the representation $\Speh(\tau,2n)$. Now \cite{GS15} Theorem B or \cite{GGS17} Section 3.3 implies that $\mathcal{O}\in \mathrm{WO}(\Speh(\tau,2n))$, contradicting Theorem \ref{thm:WS of speh}. This completes the proof. 
\end{proof}

\begin{Rem}
The relation between the leading coefficient and the dimension of degenerate Whittaker models is not known yet. Moreover, in order to prove the non-vanishing part, we need an extension of results in \cite{GS15} to $\GL_{k,D}$. 

In Section \ref{sec:Archimedean using global}, we use global arguments to prove some partial results towards the nonvanishing and multiplicity one. 
\end{Rem}

\subsection{A special instance}\label{sec:theta}

Regarding the multiplicity one result, we now give a proof in the minimal case using the theta correspondence. The following proof follows from a suggestion by Hang Xue. 

In this section only, let $D$ be the non-split unique quaternion algebra over a local field $F$. 

\begin{Prop}
For an irreducible admissible representation of $D^{\times}$, 
\[
\dim \Hom_{N_{(2)_{D}}}(\Speh_{D}(\tau,1),\psi_{(2)_{D}})=1. 
\]
\end{Prop}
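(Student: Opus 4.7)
The plan, following Hang Xue's suggestion, is to realize $\Speh_D(\tau,1)$ as a local theta lift from $D^{\times}$ under an appropriate reductive dual pair, and then to invoke the preservation of (generalized) Whittaker models under theta correspondence established by Gomez--Zhu \cite{GZ14}.

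First I would identify the correct reductive dual pair. The natural candidate involves $D^{\times}$ paired with a group closely related to $\GL_{2,D}$, arising from a $2$-dimensional quaternionic module, together with the associated Weil-type representation $\omega$. I would then verify that the small theta lift $\Theta(\tau)$ of $\tau$ is isomorphic to $\Speh_D(\tau,1)$. The comparison can be carried out on characters: using the Howe--Kudla formula on one side, and Proposition \ref{prop:sign in character identity} together with the explicit description of $|\LJ|\bigl(\Speh(\tau^{JL},2)\bigr)$ on the other, where $\tau^{JL}\in\scd_{2}$ denotes the Jacquet--Langlands transfer of $\tau$ to a discrete series of $\GL_2(F)$.

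Once the theta realization is in place, I would apply the main result of \cite{GZ14}, which transfers generalized Whittaker models across a theta correspondence with preservation of multiplicity. Under the associated moment maps, the orbit $(2)_D\subset \fg_{2,D}^{\ast}$ should be matched with the unique (trivial) nilpotent orbit of $D^{\times}$. Their theorem then yields an isomorphism of the form
\[
\Hom_{N_{(2)_D}}\bigl(\Theta(\tau),\psi_{(2)_D}\bigr)\;\simeq\;\Hom_{D^{\times}}(\tau,\tau),
\]
the right-hand side being one-dimensional by Schur's lemma, which gives the desired statement.

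The main obstacle I anticipate is the first step: pinning down the correct dual pair and rigorously checking that the small theta lift of $\tau$ is exactly $\Speh_D(\tau,1)$, rather than a closely related representation (a twist, a contragredient, or a shift by a character). Theta lifts for non-split quaternionic pairs are notoriously sensitive to signs, to the choice of additive character, and to splittings of metaplectic covers, so the character-matching argument has to be carried out in parallel with the Jacquet--Langlands transfer on the $\GL_2(F)$-side. Once the identification is established, the application of \cite{GZ14} and the concluding use of Schur's lemma are formal.
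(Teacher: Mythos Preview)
Your overall strategy---realize $\Speh_D(\tau,1)$ as a theta lift and invoke Gomez--Zhu---is the paper's strategy too, but the dual pair you picked and the endgame are both off, and the second issue is a genuine gap.

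First, a clarification that may have misled you: despite the wording of the Proposition, in the paper's conventions $\tau$ is a discrete series of $\GL_2(F)$, not of $D^\times$ (recall $\Speh_D(\tau,n)$ is only defined for $\tau$ generic unitary on $G_k=\GL_k(F)$). Accordingly, the paper lifts \emph{from} $\GL_2(F)$, via the similitude pair $(\mathrm{GSp}(2),\mathrm{GSO}(5,1))$ together with the identifications $\mathrm{GSp}(2)=\GL_2(F)$ and $\mathrm{GSO}(5,1)\cong(\GL_{2,D}\times\GL_1)/\{(z\cdot\mathrm{Id},z^{-2})\}$. The lift is known explicitly, $\Theta(\tau)=Lg(\nu^{1/2}\C(\tau)\times\nu^{-1/2}\C(\tau))\boxtimes\omega_\tau=\Speh_D(\tau,1)\boxtimes\omega_\tau$, so no character-matching argument is needed. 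Under the moment maps the \emph{nontrivial} nilpotent orbits on the two sides correspond, whence Gomez--Zhu gives $\mathrm{Wh}_{(2)_D}(\Speh_D(\tau,1))\cong\mathrm{Wh}_{(2)}(\tau)$, and one concludes by uniqueness of Whittaker models for $\GL_2(F)$.

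The gap in your version is the final isomorphism. Even granting a type~II quaternionic pair $(D^\times,\GL_{2,D})$ and that the moment map carries the zero orbit of $D^\times$ to $(2)_D$, what \cite{GZ14} yields is $\mathrm{Wh}_{(2)_D}(\Theta(\tau))\cong\mathrm{Wh}_{0}(\tau)$. But the degenerate Whittaker quotient attached to the zero orbit is just $\tau$ itself, so you would get $\mathrm{Wh}_{(2)_D}(\Theta(\tau))\cong\tau$, which is not one-dimensional as soon as $\dim\tau>1$. Nothing in \cite{GZ14} produces the space $\Hom_{D^\times}(\tau,\tau)$ here, so the appeal to Schur's lemma is unfounded. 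The paper's choice of $\GL_2(F)$ on the small side is exactly what makes the argument go through: the corresponding orbit there is the regular one, whose Whittaker space is one-dimensional by the classical theorem.
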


\begin{proof}
We use the theta correspondence for the similitude pair
\[
(\mathrm{GSp}(2),\mathrm{GSO}(5,1)). 
\]
Observe that we have the following isomorphisms 
\[
\begin{aligned}
\mathrm{GL}(2) & =\mathrm{GSp}(2)\\ 
\mathrm{GSO}(5,1) & = (\mathrm{GL}_{2,D}\times \mathrm{GL}_1)/ \{(z\cdot \mathrm{Id},z^{-2}):z\in \mathrm{GL}(1) \} .\\
\end{aligned}
\]
Via these isomorphisms, 
an irreducible representation of $\mathrm{GSO}(5,1)$ is of the form $\pi \boxtimes \mu$ where $\pi$ is a representation of $\GL_{2,D}$ and $\mu$ is a square root of the central character of $\pi$. 



The theta correspondence from $\mathrm{GL}_2$ to $\mathrm{GSO}(5,1)$ for discrete series representations is given as follows
\[
\tau \mapsto \Theta(\tau)=Lg(\nu^{1/2}\C(\tau) \times  \nu^{-1/2}\C(\tau)) \boxtimes \omega_{\tau}.
\]
Here, $Lg$ denotes the Langlands quotient of the induced representation considered. Note that the above induced representation is reducible if and only if $\dim \C(\tau)>1$. We have 
\[
\Theta(\tau)=\Speh_{D}(\tau,1) \boxtimes \omega_{\tau}.
\]

To proceed, we use the result of Gomez-Zhu \cite{GZ14} to relate degenerate Whittaker models of $\tau$ and $\Theta(\tau)$. The result of Gomez-Zhu says that there is an isomorphism 
\[
\mathrm{Wh}_{\mathcal{O}}(\pi)\simeq \mathrm{Wh}_{\mathcal{O}'}(\Theta(\tau))
\] 
for two nilpotent orbits $\mathcal{O}$ and $\mathcal{O}'$ that correspond to each other under the moment map. Both $\mathrm{GL}_2$ to $\mathrm{GSO}(5,1)$ have two nilpotent orbits: the trivial one and the nontrivial one. Under the moment map, the nontrivial ones correspond to each other. As a result, we have an isomorphism between the Whittaker model of $\tau$ and $\mathrm{Wh}_{(2)_D}(\Speh(\tau,1))$. Now our result follows from the uniqueness of Whittaker models for $\mathrm{GL}(2)$.
%
\end{proof}

\section{The global Speh representations}\label{sec:global Speh}

In this section, we define the global Speh representations. 

Denote $\DS_{k}$ (resp. $\DS_{k,D}$) the set of discrete series of $G_{k}(\ba)$ (resp. $G_{k,D}(\ba)$).

\subsection{The residual spectrum of $G_k$}

We now recall the construction of the generalized Speh representations in the global setup. A theorem of M{\oe}glin-Waldspurger \cite{MW89} says that these consist of the residual spectrum of $G_k$.

Let $m$ be a positive integer and $\rho\in DS_m$ is a cuspidal representation. For a positive integer $n$, the induced representation $\prod_{i=0}^{n-1}(\nu^{\frac{n-1}{2}-i}\rho)$ has a unique constituent $\pi$ which is a discrete series (i.e. $\pi\in \DS_{mn}$). One has $\pi_v=\Speh(\tau_v,n)$. As a result, we write $\pi=\Speh(\tau,n)$. 

Discrete series of the groups $G_k(\mathbb{A})$ are all of this type, and $n$ and $\rho$ are determined by $\pi$. Moreover, $\pi$ is supercuspidal if and only if $n=1$. (By \cite{Badulescu08} Section 5.2 and \cite{BR10} Section 18, the same classification also holds for $G_{k,D}(\mathbb{A})$.)

The generalized Speh representations admit an automorphic realization. We now recall the construction. Let $F$ be a number field. Let $\tau$ be an irreducible cuspidal representation of $\GL_m(\ba)$. Let $\underline{s}=(s_1,\cdots, s_n)\in\bC^n$. We consider the following normalized induced representation
\[
\Ind_{P_{m,n}(\ba)}^{\GL_{mn}(\ba)}\tau \nu^{s_1}\otimes \cdots \otimes \tau\nu^{s_n}.
\]
Here, $P_{m,n}$ is the standard parabolic subgroup of $\GL_{mn}$ whose Levi part is $\GL_m\times \cdots \times \GL_m$ where $\GL_m$ appears $n$ times. Let $f^{(\underline{s})}$ be a flat section in the induced representation. Then we can form an Eisenstein series
\[
E(f^{(\underline{s})})(g)=\sum_{\gamma\in P_{m,n}(F)\bs \GL_{mn}(F)}f^{(\underline{s})}(\gamma g).
\]
By a result of Jacquet, this Eisenstein has a pole at the point
\[
s_1-s_2=s_2-s_3=\cdots = s_{n-1}-s_n=1,\qquad s_1+\cdots + s_n=0.
\]
The residues of $E(f^{(\underline{s})})$ at this point give an automorphic realization of $\Speh(\tau,n)$. 


As in the local case, given a nilpotent orbit, one can define degenerate Whittaker coefficients $\mathcal{F}_{\mathcal{O}}$ of an automorphic representation $\pi$. We define $\mathrm{WS}(\pi)$ to the set of maximal nilpotent orbits that support nonzero degenerate Whittaker coefficients for $\pi$. 
The set $\mathrm{WS}(\Speh(\tau,n))$ is determined by the following result.

\begin{Thm}[\cite{Ginzburg06,JL13}]
We have $\mathrm{WS}(\Speh(\tau,n))=\{(k^n)\}$. 
\end{Thm}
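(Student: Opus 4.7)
The plan is to split the statement into two assertions: vanishing of $\mathcal{F}_{\mathcal{O}}(\Speh(\tau,n))$ for every partition $\mathcal{O}$ of $mn$ that is strictly larger than, or incomparable with, $(m^n)$ in the dominance order (here $\tau$ is cuspidal on $\GL_m$, so the partition in the statement, written $(k^n)$, is $(m^n)$), and non-vanishing of $\mathcal{F}_{(m^n)}(\Speh(\tau,n))$. Both parts will be reduced, via the residual Eisenstein realization of $\Speh(\tau,n)$ recalled just above, to standard facts about the cuspidal form $\tau$.

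\textbf{Vanishing step.} Write $\varphi = \mathrm{Res}_{\underline{s}=\underline{s}_0}\, E(f^{(\underline{s})})$ for the residual automorphic form realizing $\Speh(\tau,n)$. Fix $\mathcal{O}$ larger than or incomparable with $(m^n)$. I would unfold the sum over $P_{m,n}(F)\bs \GL_{mn}(F)$ defining $E(f^{(\underline{s})})$ inside the integral
\[
\mathcal{F}_{\mathcal{O}}(\varphi)(g)=\int_{V_{\mathcal{O}}(F)\bs V_{\mathcal{O}}(\ba)}\varphi(vg)\,\psi_{\mathcal{O}}(v)^{-1}\,dv,
\]
work with the integrand at a generic $\underline{s}$ first, and take the residue at $\underline{s}_0$ only at the very end. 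A sequence of root exchanges of Ginzburg--Rallis--Soudry type then converts an inner layer of the integral into either a Fourier coefficient of $\tau$ along a non-generic character of the standard maximal unipotent of $\GL_m$, or a constant term of $\tau$ along a proper parabolic. Non-generic Whittaker--Fourier coefficients of a cuspidal $\tau$ on $\GL_m$ vanish (all cuspidal representations of $\GL_m$ are generic), and constant terms along proper parabolics vanish by cuspidality of $\tau$; either way $\mathcal{F}_{\mathcal{O}}(\varphi)=0$.

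\textbf{Non-vanishing step.} For $\mathcal{O}=(m^n)$, unfolding $E(f^{(\underline{s})})$ against $\psi_{(m^n)}$ produces, on the open Bruhat cell, an iterated integral that collapses after suitable changes of variables to a product of $n$ copies of the standard Whittaker integral of $\tau$, multiplied by a local integral of the section $f^{(\underline{s})}$ which is not identically zero at $\underline{s}_0$. Picking $f^{(\underline{s})}$ so that the local integral is nonzero at the point of residue, and using that the Whittaker coefficient of the generic cuspidal $\tau$ is nonzero, yields $\mathcal{F}_{(m^n)}(\varphi) \neq 0$.

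\textbf{Main obstacle.} The hard part is the vanishing step for orbits incomparable with $(m^n)$. For $\mathcal{O} > (m^n)$, a uniform combinatorial comparison of the two partitions produces the required root exchange. For incomparable $\mathcal{O}$ there is no single recipe: one must perform a case analysis, driven by the precise ``gap'' between $\mathcal{O}$ and $(m^n)$, to identify a chain of unipotent subgroups along which to swap so that the reduction to a known-vanishing datum for $\tau$ actually goes through. Handling this combinatorial bookkeeping, while simultaneously tracking the residue of the Eisenstein series, is the technical heart of the argument.
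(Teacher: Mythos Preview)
The paper does not give its own proof of this statement; it is quoted from \cite{Ginzburg06,JL13}. Your outline is essentially the Jiang--Liu strategy, so at the level of approach you are aligned with the cited source rather than proposing something new.

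There is one genuine gap in your non-vanishing step. Unfolding $E(f^{(\underline{s})})$ at generic $\underline{s}$ and observing that the resulting local integral is nonzero at $\underline{s}_0$ does not give nonvanishing of the \emph{residue}; you must show that the unfolded expression itself carries a pole at $\underline{s}_0$ with nonzero residue, and your sketch says nothing about where that pole comes from. Jiang--Liu handle this differently (their Lemma~4.1, which the present paper adapts in the $D$-setting): they compute the constant term of the residual form $\varphi$ along $P_{(m^n)}$ directly---it is the residue of the constant term of $E(f^{(\underline{s})})$, hence an explicit section of an induced representation on the Levi $\GL_m^n$---and then take the product of Whittaker coefficients of $\tau$ on the Levi. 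This avoids tracking a pole through an unfolding and is the argument you should aim for.

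On the vanishing side your sketch is broadly correct (note, incidentally, that your two mechanisms collapse: a non-generic character of $N_m$ is trivial on some simple root subgroup, so the corresponding Fourier coefficient of a cuspidal $\tau$ already contains a parabolic constant term as an inner integral). But your ``main obstacle'' can be bypassed entirely: a nonzero global coefficient $\mathcal{F}_{\mathcal{O}}(\varphi)$ produces, at every place $v$, a nonzero element of $\mathrm{Wh}_{\mathcal{O}}(\Speh(\tau_v,n))$, and the local statement $\mathrm{WS}(\Speh(\tau_v,n))=\{(m^n)\}$ (recorded earlier in the paper, with reference to \cite{CFK}) forces $\mathcal{O}\leq(m^n)$ at once. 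This is exactly the shortcut the paper uses when proving the analogous vanishing for $\Speh_D(\tau,n)$. The global root-exchange argument you describe is what \cite{JL13} actually carries out, and the combinatorics for incomparable $\mathcal{O}$ is indeed where the work lies there; but it is not the only route.
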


Sometimes, we also say that the nilpotent orbit attached to $\Speh(\tau,n)$ is $(k^n)$.

\subsection{Global Jacquet-Langlands correspondence}

Let $F$ be a number field and $D$ a central division algebra over $F$ of dimension $d^2$. Let $m\in \bn^\ast$. Set $A=\mathrm{M}_m(D)$. For each place $v$ of $F$, let $F_v$ be the completion of $F$ at $v$ and set $A_v=A\otimes F_v$. For each place $v$ of $F$, $A_v\simeq \mathrm{M}_{r_v}(D_v)$ for some positive integer $r_v$ and some central division algebra $D_v$ of dimension $d_v^2$ over $F_v$ such that $r_vd_v=md$. We will fix once and for all an isomorphism and identify these two algebras. We say that $\mathrm{M}_m(D)$ is split at a place $v$ if $d_v=1$. The set of places where $\mathrm{M}_m(C)$ is not split is a finite. For each $v$, $d_v$ divides $d$, and moreover $d$ is the smallest common multiple of the $d_v$ over all the places $v$.

Let $G_{m,D}(F)$ be the group $A^{\times}=\GL_m(D)$. For every place $v\in V$, set $(G_{m,D})_v=A_v'=\GL_{r_v}(D_v)$. In particular, if $v\notin V$, we have identified the group $\GL_{r_v}(D_v)$ and $\GL_{md}(F_v)$. For every finite place $v$ of $F$, we set $K_v=\GL_{r_v}(O_v)$, where $O_v$ is the ring of integers of $D_v$. We fix then a Haar measure $dg_v$ on $G_v'$ such that $\mathrm{vol}(K_v)=1$. For every infinite place $v$, we fix an arbitrary Haar measure $dg_v$ on $G_v'$. 

Let $\mathbb{A}$ be a ring of adeles of $F$. We consider the Haar measure $dg$ on $G'(\mathbb{A})$ which is the restricted product of the measure $dg_v$. We consider $G'(F)$ as a subgroup of $G'(\mathbb{A})$ via the diagonal embedding. 


If $\pi$ is a discrete series of $G_{kd}(\ba)$ or $G_{k,D}(\ba)$, and $v$ is a place of $F$, we denote $\pi_v$ the local component of $\pi$ at the place $v$. If $\pi$ is a discrete series of $G_{kd}(\ba)$, we say that $\pi$ is $D$-compatible if for all $v$, $\pi_v$ is $d_v$-compatible. Then $\LJ_v(\pi_v)\neq 0$ and $|\LJ|_v(\pi_v)$ is an irreducible representation of $G_v'$.

We now recall the global theorem. In the local setup, we have a map $|\LJ|:\pi\mapsto \pi'$ from the set of irreducible unitary $d$-compatible representations of $G_{kd}$ to the set of irreducible unitary representations of $G_{k,D}$.

\begin{Thm}[\cite{Badulescu08} Theorem 5.1, \cite{BR10} Theorem 18.1]
There exists a unique map $\mathbf{G}$ from the set of discrete series of $G_{k,D}(\ba)$ into the set of discrete series of $G_{kd}(\ba)$ such that $\mathbf{G}(\pi')=\pi$ implies $|\LJ|_v(\pi_v)=\pi_v'$ for all places $v\in V$ and $\pi_v=\pi'_v$ for all $v\notin V$. The map $\mathbf{G}$ is injective and onto the set of $D$-compatible discrete series of $G_{kd}(\ba)$.
\end{Thm}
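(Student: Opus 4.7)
The plan is to compare Arthur–Selberg trace formulas on $G_{kd}(\ba)$ and $G_{k,D}(\ba)$, bridging them by the local transfer $|\LJ|_v$ (together with the identification of the groups at the split places $v\notin V$) already established in the preceding sections. Uniqueness is a consequence of strong multiplicity one for $\GL_n$ together with the injectivity of $|\LJ|_v$; the substance of the theorem is the existence assertion and the description of the image.

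First I would settle uniqueness and injectivity. Suppose $\mathbf{G}(\pi')=\pi$ and $\mathbf{G}(\pi')=\pi_0$ both satisfy the stated local conditions. Then $\pi_v=(\pi_0)_v$ for all $v\notin V$ since both equal $\pi'_v$. By the strong multiplicity one theorem of Jacquet–Shalika applied to the discrete spectrum of $\GL_{kd}$, this forces $\pi=\pi_0$, giving uniqueness of $\mathbf{G}$. The same argument, combined with the fact that $|\LJ|_v$ is a well-defined map on irreducible unitary representations (established earlier), shows that if $\mathbf{G}(\pi')=\mathbf{G}(\pi'')$, then $\pi'_v=\pi''_v$ at every place, so $\pi'=\pi''$ by strong multiplicity one on the $G_{k,D}$ side.

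For existence I would set up the trace formula comparison. Choose a factorizable test function $f'=\prod_v f'_v$ on $G_{k,D}(\ba)$ and transfer it place-by-place to a test function $f=\prod_v f_v$ on $G_{kd}(\ba)$: at $v\notin V$ take $f_v=f'_v$ under the identification $G_{k,D}(F_v)\simeq G_{kd}(F_v)$, and at $v\in V$ take $f_v$ to be a function whose regular semisimple orbital integrals match those of $f'_v$ in the sense of the DKV/Badulescu transfer (the existence of such $f_v$, and the vanishing of orbital integrals of $f_v$ on elements not coming from $G_{k,D}$, are the local ingredients underlying the character identity with sign $(-1)^{kd-k}$). With these matched test functions, I would compare the elliptic parts of the geometric sides of the two trace formulas; the orbital integrals agree up to the global sign $\prod_v (-1)^{kd-k}=1$ after summing over places, because each conjugacy class in $G_{k,D}(F)$ embeds as a class in $G_{kd}(F)$ with matching centralizer volumes, while classes in $G_{kd}(F)$ not coming from $G_{k,D}$ contribute zero by the vanishing property above.

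It then remains to identify the spectral sides. Using the classification of the discrete spectrum of $\GL_n$ by M\oe glin–Waldspurger (and the analogous classification for $G_{k,D}$ recalled in \cite{Badulescu08,BR10}), I would expand both spectral sides as sums over discrete series of traces $\operatorname{tr}\pi(f)$ and $\operatorname{tr}\pi'(f')$, together with contributions from parabolically induced representations. The induced contributions on both sides can be matched inductively using the compatibility of $\LJ$ with parabolic induction (Theorem of \cite{Badulescu07} recalled in Section \ref{sec:local JL}), so they cancel. Varying the test functions and using linear independence of characters together with the local identity $\operatorname{tr}\pi_v(f_v)=\varepsilon_{\pi_v}\operatorname{tr}(|\LJ|_v(\pi_v))(f'_v)$, I obtain a bijection between $D$-compatible discrete series of $G_{kd}(\ba)$ and discrete series of $G_{k,D}(\ba)$ compatible with the prescribed local conditions. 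The main obstacle in this program is the technical matching of orbital integrals at Archimedean places in $V$ (where $D_v=\bH$); this is where one genuinely needs the Archimedean extension of the DKV transfer carried out in \cite{BR10} and the precise sign computations already used in the proof of Proposition \ref{prop:sign in character identity}.
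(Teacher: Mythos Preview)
The paper does not actually prove this theorem: it is merely quoted from \cite{Badulescu08} Theorem~5.1 and \cite{BR10} Theorem~18.1 as background, so there is no ``paper's own proof'' to compare against. Your sketch follows the trace formula comparison strategy that Badulescu in fact uses in those references, so the approach is on target in spirit.

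That said, a few of the details you give are inaccurate or glossed over in ways that matter. The global sign you compute as $\prod_v (-1)^{kd-k}$ is not how the signs cancel: the sign appears at each place $v\in V$ (a finite set, not all places), and the actual control of signs and of the spectral identity in \cite{Badulescu08} is more delicate than a single parity count. More substantially, the ``simple'' separation you describe---match the induced contributions inductively and cancel them, leaving a clean bijection of discrete series---understates the work: one must use the full simple trace formula (with test functions supercuspidal at some places to kill the continuous and non-elliptic contributions), together with the M{\oe}glin--Waldspurger classification and a careful analysis of when $|\LJ|_v$ vanishes, to isolate individual discrete series. Also, strong multiplicity one for $G_{k,D}$ is not an input but an \emph{output} of this circle of ideas (it is stated in the paper immediately after this theorem, again citing \cite{BR10}), so invoking it for injectivity is circular; injectivity on the $G_{k,D}$ side is deduced from the trace identity itself.
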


We also have the multiplicity one and strong multiplicity one theorems for $G_{k,D}(\mathbb{A})$. 

\begin{Thm}[\cite{BR10} Theorem 18.1]
The multiplicity of every discrete series of $G_{k,D}(\mathbb{A})$ in the discrete spectrum is one. If two discrete series of $G_{k,D}(\mathbb{A})$ have isomorphic local components at almost every place, then they are equal. 
\end{Thm}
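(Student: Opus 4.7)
The plan is to bootstrap both assertions from the global Jacquet-Langlands correspondence $\mathbf{G}$ constructed in the preceding theorem, together with the classical multiplicity one and strong multiplicity one theorems for $\GL_{kd}(\ba)$ (Shalika; Jacquet-Shalika; Piatetski-Shapiro).

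For strong multiplicity one, suppose $\pi'_1,\pi'_2$ are two discrete series of $G_{k,D}(\ba)$ with $(\pi'_1)_v\simeq (\pi'_2)_v$ for almost every place $v$. Set $\pi_i:=\mathbf{G}(\pi'_i)$, a $D$-compatible discrete series of $G_{kd}(\ba)$. By definition of $\mathbf{G}$, at every split place $v\notin V$ we have $(\pi_i)_v=(\pi'_i)_v$, and the set of split places is cofinite in the set of places of $F$. Consequently $(\pi_1)_v\simeq (\pi_2)_v$ at almost every place, so strong multiplicity one for $\GL_{kd}(\ba)$ forces $\pi_1=\pi_2$. Injectivity of $\mathbf{G}$ then yields $\pi'_1=\pi'_2$.

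For the multiplicity one assertion the route is more delicate, because a bare transfer of isomorphism classes does not detect spectral multiplicities. Here one invokes the trace-formula comparison that underlies the construction of $\mathbf{G}$ in \cite{Badulescu08, BR10}. Given a discrete series $\pi'$ of $G_{k,D}(\ba)$ with putative multiplicity $m(\pi')$ and its transfer $\pi=\mathbf{G}(\pi')$ with multiplicity $m(\pi)$, one chooses matching test functions $f'=\prod f'_v$ on $G_{k,D}(\ba)$ and $f=\prod f_v$ on $G_{kd}(\ba)$ via Deligne-Kazhdan transfer, so that orbital integrals agree on the image of $G_{k,D}$ and $f$ is supported away from non-transferring classes. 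The simple Arthur trace formula and the local character identities of Proposition \ref{prop:sign in character identity} (extended to all discrete series via $\C$) equate the discrete spectral sides up to global signs, yielding
\[
\sum_{\pi'} m(\pi')\,\mathrm{tr}(\pi'(f')) \;=\; \sum_{\pi} m(\pi)\,\mathrm{tr}(\pi(f))
\]
after controlling residual and continuous contributions. Linear independence of characters, combined with the strong multiplicity one already proved, isolates each $\pi'$ and gives $m(\pi')=m(\mathbf{G}(\pi'))$; since $m(\mathbf{G}(\pi'))=1$ by multiplicity one for $\GL_{kd}(\ba)$, we conclude $m(\pi')=1$.

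The main obstacle will be the multiplicity one portion: matching the discrete spectral sides requires isolating the cuspidal and residual Eisenstein contributions on both sides and ensuring that the global sign coming from the product of local $\varepsilon_{\pi_v}$ is $+1$ on genuine discrete series (a computation in the spirit of Proposition \ref{prop:sign in character identity}, but globally). Strong multiplicity one is essentially formal once $\mathbf{G}$ is in hand; the real content is geometric-spectral in nature and is what makes the trace-formula approach of \cite{Badulescu08, BR10} unavoidable.
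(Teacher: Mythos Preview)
The paper does not give its own proof of this theorem: it is stated with attribution to \cite{BR10} Theorem 18.1 and no argument is supplied. There is therefore nothing in the present paper to compare your proposal against.

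That said, your outline is broadly faithful to how \cite{Badulescu08,BR10} establish the result. Two remarks on accuracy. First, your strong multiplicity one argument is clean and correct once $\mathbf{G}$ is granted; this is exactly how it is deduced in the cited references. Second, your invocation of Proposition~\ref{prop:sign in character identity} is misplaced: that proposition concerns the sign for the specific transfer $\Speh(\tau,nd)\mapsto \Speh_D(\tau,n)$ under $|\LJ|$, not the character identities entering the trace formula comparison. The relevant local identities for the spectral comparison are those of the original correspondence $\C$ on discrete series (sign $(-1)^{kd-k}$), together with the transfer of orbital integrals; the sign bookkeeping in \cite{Badulescu08} is done directly at that level, not via the Speh computation. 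Finally, be aware of a mild circularity in presentation: in \cite{Badulescu08,BR10} the map $\mathbf{G}$, multiplicity one, and strong multiplicity one are all extracted simultaneously from the same trace formula identity, so treating $\mathbf{G}$ as a prior black box and then re-running the trace formula for $m(\pi')$ is expositorily awkward even if not logically wrong given the order of statements here.
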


\begin{Prop}[\cite{BR10} Proposition 18.2]\label{thm: global def discrete series}
Let $\tau \in \DS_k$ be a cuspidal representation. Let $s_{\tau,D}$ be the smallest common multiple of $s_{\tau_v,d_v}$, $v\in V$ (see Proposition\ref{prop:transfer of unitary non-arch}). 
\begin{enumerate}
\item $\Speh(\tau,n)$ is $D$-compatible if and only if $s_{\tau, D}\mid n$. Moreover, $s_{\tau, D}\mid d$. 
\item $\mathbf{G}^{-1}(\Speh(\tau,s_{\tau, D}))=\tau'\in \DS_{m s_{\tau,D}/d,D}$ is cuspidal. In particular, $\mathbf{G}^{-1}$ sends cuspidal representations to cuspidal representations. 
\item Let $\tau'$ be a cuspidal representation of some $G_{k,D}(\ba)$. Write $\mathbf{G}(\tau')=\Speh(\tau,s_{\rho,D})$ and set $\nu_{\tau'}=\nu^{k_\tau}$. For every positive integer $m$, the induced representation 
\begin{equation}\label{eq:induced of quat speh}
\nu_{\tau'}^{(m-1)/2}\tau' \times \nu_{\tau'}^{(m-3)/2}\tau' \times \cdots \times \nu_{\tau'}^{(1-m)/2}\tau'
\end{equation}
has a unique irreducible quotient which we will denote by $\Speh'(\tau',m)$. It is a discrete series, and all discrete series are obtained for some cuspidal $\tau'$ in this way. If $\mathbf{G}(\tau')=\Speh(\tau,s_{\tau,D})$, then $\mathbf{G}(\Speh'(\tau',m))=\Speh(\tau,ms_{\tau,D})$.
\end{enumerate}
\end{Prop}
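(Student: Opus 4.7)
The plan is to prove the three parts in the order (1), (3), (2), reducing cuspidality in (2) to a consequence of the structural classification in (3). Part (1) is a purely local matter: by the definition of $D$-compatibility as a local condition and the identity $\Speh(\tau,n)_v = \Speh(\tau_v,n)$, it reduces to determining when each local factor is $d_v$-compatible. First I would handle $\tau_v\in\scd^u_{k,F_v}$ directly via Proposition~\ref{prop:transfer of unitary non-arch}, reading off the local obstruction as $s_{\tau_v,d_v}\mid n$. For a general cuspidal $\tau$, each local component is a product of Speh-like pieces attached to discrete series factors of $\tau_v$; using that $\LJ$ commutes with parabolic induction, the same divisibility criterion persists. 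Taking the conjunction over all $v\in V$ gives $s_{\tau,D}\mid n$. That $s_{\tau,D}\mid d$ follows from the observation that each $s_{\tau_v,d_v}$ divides $d_v$, which divides $d$.

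For part (3), I would use $\mathbf{G}$ to pull the M{\oe}glin--Waldspurger classification back to $G_{k,D}(\mathbb{A})$. Since $\mathbf{G}$ is an injective bijection from discrete series of $G_{k,D}(\mathbb{A})$ onto $D$-compatible discrete series of $G_{kd}(\mathbb{A})$, every discrete series $\pi'$ of $G_{k,D}(\mathbb{A})$ satisfies $\mathbf{G}(\pi')=\Speh(\tau,N)$ for a unique cuspidal $\tau$ and integer $N$, with $s_{\tau,D}\mid N$ by part (1). Writing $N=m\cdot s_{\tau,D}$, I would organize these discrete series by $(\tau,m)$. To realize them, two things must be checked: (i) at each place $v$, the local induced representation~\eqref{eq:induced of quat speh} has a unique irreducible quotient; (ii) $|\LJ|_v$ of this quotient recovers the local component of $\Speh(\tau,m s_{\tau,D})$. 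Point (i) is the local Langlands classification (the quotient is $u(\tau'_v,m)$ or its Archimedean analogue), and point (ii) follows from Proposition~\ref{prop:transfer of unitary non-arch} together with the compatibility of $\LJ$ with parabolic induction, upgraded to $|\LJ|$ via Proposition~\ref{prop:sign in character identity} for sign control. Injectivity of $\mathbf{G}$ then shows that the $\Speh'(\tau',m)$ exhaust the discrete series, and discreteness of $\Speh'(\tau',m)$ itself follows from discreteness of its $\mathbf{G}$-image via the strong multiplicity one theorem.

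Part (2) is then a formal consequence. Suppose $\tau'=\mathbf{G}^{-1}(\Speh(\tau,s_{\tau,D}))$ were not cuspidal. By part (3), $\tau'=\Speh'(\tau'',m)$ for some cuspidal $\tau''$ and integer $m\geq 2$; applying $\mathbf{G}$ gives $\Speh(\tau,s_{\tau,D})=\Speh(\tau_0,m s_{\tau_0,D})$ where $\mathbf{G}(\tau'')=\Speh(\tau_0,s_{\tau_0,D})$. Uniqueness in the M{\oe}glin--Waldspurger parametrization forces $\tau_0=\tau$ and $m s_{\tau,D}=s_{\tau,D}$, i.e. $m=1$, a contradiction. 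The main obstacle throughout lies in point (ii) of the classification step: proving that $|\LJ|$ sends the Langlands quotient of~\eqref{eq:induced of quat speh} to the prescribed Speh representation on the split side. This requires carefully tracking how the local parameters $s_{\tau'_v,d_v}$ and $s_{\tau_v,d_v}$ relate through the global ``rescaling by $s_{\tau,D}$,'' and reconciling the Aubert-involution appearances in Proposition~\ref{prop:transfer of unitary non-arch}(2) with the uniform description of $\Speh'(\tau',m)$ as a single Langlands quotient.
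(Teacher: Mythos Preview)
The paper does not give its own proof of this proposition: it is quoted verbatim as \cite{BR10} Proposition~18.2 and used as a black box. So there is nothing in the present paper to compare your proposal against.

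That said, a few remarks on your sketch as a potential route to the result in \cite{BR10}. Your argument for part~(1) is essentially correct; the only thing to be careful about is that $\tau_v$ need not be a discrete series, so $s_{\tau_v,d_v}$ must be defined as the lcm of the invariants $s$ attached to the discrete-series factors of $\tau_v$ (this is how the number in Proposition~\ref{prop:transfer of unitary non-arch} enters). Your reduction of~(2) to~(3) is clean. The weak point is in~(3): you propose to deduce that $\Speh'(\tau',m)$ is a discrete series from the discreteness of its $\mathbf{G}$-image, but $\mathbf{G}$ is only defined on discrete series, so this is circular. In \cite{Badulescu08,BR10} the square-integrability of $\Speh'(\tau',m)$ is obtained independently, by realizing it as an iterated residue of the Eisenstein series attached to~\eqref{eq:induced of quat speh} (see the remark following the proposition in the present paper, which points to \cite{Badulescu08} Lemma~A.5). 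Once that is in hand, the identification $\mathbf{G}(\Speh'(\tau',m))=\Speh(\tau,ms_{\tau,D})$ follows, as you say, from the place-by-place computation of $|\LJ|_v$ together with strong multiplicity one.
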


As a speical instance of this proposition, the Speh representation $\Speh(\tau,nd)$ is $D$-compatible for all central division algebra $D$ over $F$. (This is because $\Speh(\tau_v,nd)$ is already $d$-compatible and therefore $d_v$-compatible.)

We would like to note that, as in the construction of $\Speh(\tau,n)$, the representation $\Speh'(\tau',n)$ can also be constructed using residues of Eisenstein series attached to \eqref{eq:induced of quat speh}  (\cite{Badulescu08} Lemma A.5). We omit the details here.

\subsection{The global quaternionic Speh representations}

We can now define the generalized Speh representations for $G_{kn,D}(\mathbb{A})$.

\begin{Def}
For an irreducible cuspidal representation $\tau$ of $\GL_k(\mathbb{A})$ and a positive integer $n$, we define 
\[
\Speh_D(\tau,n)=\mathbf{G}^{-1}(\Speh(\tau,nd)).
\]
\end{Def}

\begin{Rem}
Sometimes we understand the construction in the following way. Consider the following family of representations: 
\[
\{\mathbf{G}^{-1}(\Speh(\tau,m))\mid  m=1,2,\cdots\}. 
\]
Then we have the following:
\begin{enumerate} 
\item the first occurance (the index for the first non-zero member) is $s_{\tau,D}$;
\item the first occurance gives a cuspidal representation;
\item$\mathbf{G}^{-1}(\Speh(\tau,m))\neq 0$  if and only if $s_{\tau,D}\mid m$. These representations can be constructed from $\mathbf{G}^{-1}(\Speh(\tau,s_{\tau,D}))$ using residues of Eisenstein series. 
\end{enumerate}
In some sense, this is similar to the theta correspondence. 
\end{Rem}

Observe that that $\Speh_D(\tau,n)$ is cuspidal only if $n=1$. The representation $\Speh_D(\tau,1)$ is cuspidal if and only if $s_{\tau,D}=d$. It is easy to construct examples such that $\Speh_D(\tau,1)$ is not cuspidal. 

For a fixed $\tau$, it is easy to find $D$ such that $\Speh_D(\tau,1)$ is cuspidal. In fact, if there exists a place $w$ such that $\tau_w$ is unramified and $D$ does not split at $w$, then $s_{\tau,D}=d$ and therefore $\Speh_D(\tau,1)$ is cuspidal. (As a result, $\Speh_D(\tau,1)$ is cuspidal for almost all $D$.)




\subsection{Fourier coefficients}

In this section, we will study Fourier coefficients of $\Speh_D(\tau,n)$. 

\begin{Thm}
For any nilpotent orbit $\mathcal{O}$ greater than $(k^n)_D$, $\mathcal{F}_{\mathcal{O}}(\phi)=0$ for all $\phi\in \Speh_D(\tau,n)$. 
\end{Thm}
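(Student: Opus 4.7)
The plan is to reduce the global vanishing to the local vanishing results already established, using the standard fact that a nonzero global degenerate Whittaker functional on an irreducible automorphic representation produces nonzero local degenerate Whittaker functionals at every place.

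First I would set up the reduction. Write $\Speh_D(\tau,n) \cong \bigotimes'_v \pi_v$ as a restricted tensor product of its local components. Suppose for contradiction that $\mathcal{F}_{\mathcal{O}}(\phi_0)(g_0) \neq 0$ for some $\phi_0 \in \Speh_D(\tau,n)$ and some $g_0 \in G_{kn,D}(\mathbb{A})$. Then $\ell(\phi) := \mathcal{F}_{\mathcal{O}}(\phi)(g_0)$ is a nonzero linear functional on $\Speh_D(\tau,n)$ that is $(N_{\mathcal{O}}(\mathbb{A}),\psi_{\mathcal{O}})$-equivariant. Fixing a place $v$ and a pure tensor $\phi_v \otimes \phi^v$ on which $\ell$ does not vanish, the partial evaluation $\phi'_v \mapsto \ell(\phi'_v \otimes \phi^v)$ supplies a nonzero element of $\Hom_{N_{\mathcal{O}}(F_v)}(\pi_v, \psi_{\mathcal{O},v})$.

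Next I would derive the contradiction from local vanishing, and it suffices to do so at a single well-chosen place. The cleanest route is to pick any non-Archimedean $v$: Theorem \ref{thm:non-Arch orbit} gives $\mathrm{WS}(\pi_v) = \{(k^n)_{D_v}\}$, so any orbit strictly greater than $(k^n)_D$ is not dominated by anything in $\mathrm{WS}(\pi_v)$, hence does not lie in $\mathrm{WO}(\pi_v)$, and the corresponding local Hom space vanishes. Alternatively, one may argue at an Archimedean place using the Proposition of Section \ref{sec:Arch I}, which shows that every orbit in $\mathrm{WS}(\pi_v)$ is contained in the closure of $(k^n)_{D_v}$; again any orbit strictly greater than $(k^n)_D$ gives a vanishing local Hom space.

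The only point requiring care, and the main potential obstacle, is matching the unipotent subgroup and additive character used in the global coefficient $\mathcal{F}_{\mathcal{O}}$ with the pair $(N_{\mathcal{O}}(F_v),\psi_{\mathcal{O},v})$ of the local theory at each place $v$. This matching is built into the standard Jacobson--Morozov construction of both objects (see \cite{GGS17}), and also into the explicit description given in the Remark of Section \ref{sec:quaternionic speh} for orbits of the form $(k^n)_D$ and their refinements. I expect this bookkeeping to be entirely routine, with no genuine analytic or representation-theoretic difficulty beyond it.
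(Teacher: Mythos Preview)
Your proposal is correct and is exactly the argument the paper has in mind: the paper's proof is the single sentence ``This is a consequence of the local vanishing result,'' and what you have written is precisely the standard unpacking of that sentence (a nonzero global degenerate Whittaker coefficient yields nonzero local functionals, contradicting Theorem~\ref{thm:non-Arch orbit} or the Proposition of Section~\ref{sec:Arch I}). The only bookkeeping point, which you already flagged, is that at a place $v$ one has $D\otimes F_v\simeq \mathrm{M}_{d/d_v}(D_v)$, so the orbit $(k^n)_D$ localizes to $(k^{nd/d_v})_{D_v}$ and $\pi_v=\Speh_{D_v}(\tau_v,nd/d_v)$; with this identification your invocation of $\mathrm{WS}(\pi_v)$ is exactly Theorem~\ref{thm:non-Arch orbit}.
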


\begin{proof}
This is a consequence of the local vanishing result. 
\end{proof}

We now state the global non-vanishing result. Here we consider the degenerate Whittaker coefficient
\[
\mathcal{F}_{(k^n)_D}(\phi):=\int_{N_{(k^n)_D}(F) \backslash N_{(k^n)_D}(\mathbb{A})} \phi(u)\bar\psi_{(k^n)_D}(u) \ du. 
\]

\begin{Thm}\label{thm: global non-vanishing}
There exists $\phi\in \Speh_D(\tau,n)$ such that 
\[
\mathcal{F}_{(k^n)_D}(\phi)\neq 0. 
\]
\end{Thm}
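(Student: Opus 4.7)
The strategy is to realize $\Speh_D(\tau,n)$ as a residue of an Eisenstein series from a cuspidal datum, compute the $(k^n)_D$-Fourier coefficient by unfolding, and thereby reduce to the non-vanishing of a ``Whittaker-type'' Fourier coefficient on the cuspidal datum. The latter will be handled by a global argument in the spirit of Kazhdan--Patterson \cite{KP84}.

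Setting $s = s_{\tau,D}$ and $m = nd/s$, the representation $\tau' := \mathbf{G}^{-1}(\Speh(\tau,s)) \in \DS_{ks/d,D}$ is cuspidal by Proposition \ref{thm: global def discrete series}, and $\Speh_D(\tau,n) = \Speh'(\tau',m)$ is realized as the residue, at the appropriate point, of the Eisenstein series $E(g,f,\underline{s})$ attached to the induced representation
\[
\nu_{\tau'}^{(m-1)/2}\tau' \times \cdots \times \nu_{\tau'}^{(1-m)/2}\tau'
\]
on the parabolic $P$ of $\GL_{kn,D}$ with Levi $\GL_{ks/d,D}^m$. We then compute $\mathcal{F}_{(k^n)_D}(E(g,f,\underline{s}))$ by the standard unfolding over double cosets $P(F)\backslash \GL_{kn,D}(F)/N_{(k^n)_D}(F)$ and take the residue. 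In analogy with the classical Speh case (cf.\ \cite{Ginzburg06, JL13}, and the related analyses for twisted doubling integrals), only a single ``Speh-type'' double coset should survive: the other cosets are killed by cuspidality of $\tau'$, or after the residue by incompatibility with the local uniqueness and vanishing results (Theorem \ref{thm:non-Arch orbit} and Section \ref{sec:Arch I}). The surviving term should express $\mathcal{F}_{(k^n)_D}(\phi)$ as an integral of a natural degenerate Whittaker coefficient of $\tau'$ against the Eisenstein data.

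For the base case it remains to show that $\tau'$ itself supports the degenerate Whittaker coefficient produced by the unfolding. We address this via the Kazhdan--Patterson technique: use Theorem \ref{thm:non-Arch orbit} at the non-Archimedean places, and the partial Archimedean results of Section \ref{sec:Arch I} together with the Kirillov-model hypothesis of Appendix \ref{app:Kirillov}, to produce local functionals; paste them into a global automorphic functional on $\tau'$; and argue by contradiction, using strong multiplicity one for $\GL_{ks/d,D}(\mathbb{A})$ together with the character identity of Proposition \ref{prop:sign in character identity}, that identical vanishing on $\tau'$ would force the Whittaker coefficient of $\Speh(\tau,s)$ on $\GL_{ks}(\mathbb{A})$ to vanish as well, contradicting the known non-vanishing from \cite{Ginzburg06, JL13}.

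The principal obstacle is the unfolding step: identifying the unique contributing double coset and matching the resulting integral to the specific Fourier coefficient on $\tau'$ whose non-vanishing we can establish requires a careful local analysis compatible with the Jacquet--Langlands transfer at every place, as the correspondence of orbits $\mathcal{O} \leftrightarrow \mathcal{O}'$ is only ``partial'' when $s<d$. A secondary delicate point is the Archimedean portion of the globalization, whose success depends on the Kirillov-type hypothesis treated in Appendix \ref{app:Kirillov}.
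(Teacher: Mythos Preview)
Your overall strategy---realize $\Speh_D(\tau,n)$ as a residue of an Eisenstein series built from the cuspidal $\tau'$, then reduce the $(k^n)_D$-coefficient to a coefficient on $\tau'$---is in the right spirit, but both the reduction step and the base case diverge from what the paper does, and the base case argument as you describe it does not work.

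\textbf{The reduction.} You propose to unfold the Eisenstein series along $P(F)\backslash \GL_{kn,D}(F)/N_{(k^n)_D}(F)$ where $P$ has Levi $\GL_{ks/d,D}^{m}$ with $m=nd/s$. When $s<d$ the block size $ks/d$ of this Levi is not aligned with the block size $n$ of $N_{(k^n)_D}$, so the double-coset analysis is genuinely hard (you flag this yourself). The paper avoids this entirely. It reduces $n>1$ to $n=1$ using a \emph{different} parabolic, namely $P_{(k^n)_D}$ with Levi $\GL_{k,D}^n$: one computes the constant term of $\phi\in\Speh_D(\tau,n)$ along $V=\mathrm{Rad}_u(P_{(k^n)_D})$, which lands in the induction of $\bigotimes_i \nu^{?}\Speh_D(\tau,1)$ (the analogue of \cite{JL13} Lemma~4.1). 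One then replaces $\mathcal{F}_{(k^n)_D}$ by the equivalent coefficient $\mathcal{F}'_{(k^n)_D}$ that is trivial on $V$ and equals the product of $(k)_D$-Whittaker coefficients on the Levi factors; the equivalence $\mathcal{F}_{(k^n)_D}\not\equiv 0 \Leftrightarrow \mathcal{F}'_{(k^n)_D}\not\equiv 0$ is an instance of \cite{GGS} Theorem~8.2.1. This reduces immediately to the $D$-genericity of $\Speh_D(\tau,1)$ on $\GL_{k,D}$, with no unfolding needed.

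\textbf{The base case.} Your proposed mechanism---produce local functionals, ``paste them into a global automorphic functional'', and then use strong multiplicity one plus the character identity to transfer vanishing back to $\Speh(\tau,s)$ on the split group---is not valid. Local functionals do not globalize to automorphic functionals, and neither strong multiplicity one nor the local character identity of Proposition~\ref{prop:sign in character identity} gives any control over global Fourier coefficients across the Jacquet--Langlands correspondence (which is proved via the trace formula and sees only characters, not periods). The paper's argument for $n=1$ is quite different. If $s_{\tau,D}=d$ then $\Speh_D(\tau,1)$ is cuspidal and hence $D$-generic by the Fourier expansion (Lemma~\ref{lem:cuspidal implies generic}). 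If $s_{\tau,D}<d$, one takes the automorphic (evaluation) functional $\ell$ on the residual realization, chooses a single non-Archimedean place $v_0$, and uses the Kirillov-type embedding (Proposition~\ref{prop:D-cuspidal}) to produce a $D$-cuspidal $P_{k,D_{v_0}}$-subspace $\mathcal{K}_{v_0}\subset\pi_{v_0}$. On the subspace $T=\mathcal{K}_{v_0}\otimes(\otimes'_{v\neq v_0}\pi_v)$ the function $p\mapsto\ell(\pi(p)\phi)$ on $\tilde P_{k,D}(\mathbb{A})$ is cuspidal along every unipotent in $P_{k,D}$, hence admits a Whittaker--Fourier expansion; strong approximation for $\tilde P_{k,D}$ then forces some Whittaker term to be nonzero. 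No comparison with the split form is used.

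In short: your unfolding program is plausible but incomplete exactly where you say it is, and your base-case argument does not go through; the paper's route via constant terms plus the GGS orbit-comparison, together with the Kirillov/strong-approximation argument for $n=1$, bypasses both difficulties.
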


We will give a proof in the next section. Now we can say that the nilpotent orbit $(k^n)_D$ is the maximal nilpotent orbit which supports nonzero Fourier coefficients for $\Speh_D(\tau,n)$. 

\section{A global non-vanishing result}\label{sec:global non-vanishing}

The purpose of this section is to prove Theorem \ref{thm: global non-vanishing}. The strategy here is to adapt the proof of \cite{KP84} Theorem II.2.5 in our setting. We will first make some preparations. 

\subsection{Some observations}

We start with some simple observations. In this section, let $\tau$ be an irreducible cuspidal automorphic representation of $\GL_k(\ba)$. Recall that $D$ is a central division algebra over $F$. By Theorem \ref{thm: global def discrete series}, if $s_{\tau,D}=d$, then $\Speh_D(\tau,1)$ is a cuspidal representation. 

\begin{Def}
For an irreducible automorphic representation $\pi$ of $\GL_{k,D}(\mathbb{A})$, we write $\psi_{k,D}=\psi_{(k)_D}$ and define the $\psi_{k,D}$-Whittaker function of $\phi\in \pi$ as follows:
\[
W_{\phi}(g)=\int_{[N_{k,D}]} \phi(ug) \psi_{k,D}^{-1}(u) \ du 
\]
(It is straightforward to see that $W_{\phi}(g)=\mathcal{F}(\pi(g)\phi)$.) We say that $\pi$ is $D$-generic if 
\[
W_{\phi}(g) \neq 0
\]
for some $\phi \in \pi$. 
\end{Def}

\begin{Lem}\label{lem:cuspidal implies generic}
If $\pi$ is a cuspidal representation of $\GL_{k,D}(\ba)$, then it is $D$-generic. 
\end{Lem}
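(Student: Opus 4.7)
The plan is to adapt the Piatetski-Shapiro--Shalika Fourier expansion argument for $\GL_n$ to the setting of $\GL_{k,D}$, proceeding by induction on $k$. The base case $k=1$ is vacuous: $N_{1,D}$ is trivial, so $W_\phi=\phi$ for every $\phi\in \pi$, and any nonzero vector in $\pi$ witnesses $D$-genericity.

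For $k\ge 2$, fix a nonzero cusp form $\phi\in\pi$ and let $V$ denote the abelian unipotent radical of the maximal parabolic $P_{k-1,1}$ of $\GL_{k,D}$. Then $V(F)\backslash V(\ba)\cong D^{k-1}(F)\backslash D^{k-1}(\ba)$, and its unitary characters are parametrised by $D^{k-1}(F)$ via the reduced-trace pairing. Fourier expand $\phi$ along $V$: the constant term vanishes by cuspidality of $\phi$ along $P_{k-1,1}$, and the nontrivial characters form a single orbit under the action of the Levi $\GL_{k-1,D}(F)$, because $D$ is a division algebra (any nonzero row in $D^{k-1}$ can be completed to an element of $\GL_{k-1,D}(F)$). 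Fixing a generic representative $\psi_1$, whose stabiliser is the mirabolic $M_{k-1,D}\subset\GL_{k-1,D}$, one obtains
\[
\phi(g)=\sum_{\gamma\in M_{k-1,D}(F)\backslash \GL_{k-1,D}(F)} \phi_{\psi_1}\bigl(\mathrm{diag}(\gamma,1)g\bigr),
\]
where $\phi_{\psi_1}(g)=\int_{[V]}\phi(vg)\,\psi_1^{-1}(v)\,dv$.

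Next I would verify that $m\mapsto\phi_{\psi_1}(\mathrm{diag}(m,1)g)$ inherits enough cuspidality from $\phi$ to apply the induction hypothesis: its constant terms along parabolics of $\GL_{k-1,D}$ can be exchanged, using the commutation relations between $V$ and the unipotents in the Levi together with the character $\psi_1$, with constant terms of $\phi$ along strictly larger parabolics of $\GL_{k,D}$, all of which vanish by cuspidality. Iterating this expansion unfolds $\phi(g)$ into a sum of translates of $W_\phi(g)$, so if $W_\phi\equiv 0$ then $\phi\equiv 0$, a contradiction.

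The main obstacle is precisely the bookkeeping in this last step: one must check carefully that the exchange of unipotents needed to compare constant terms of $\phi_{\psi_1}$ on $\GL_{k-1,D}$ with constant terms of $\phi$ on $\GL_{k,D}$ is compatible with the generic character $\psi_1$, so that cuspidality transfers cleanly down the induction. This is a standard Rodier--Jacquet-type manipulation over $F$ that survives verbatim when coefficients are taken in $D$, provided the reduced trace is used in place of the usual trace; the only substantively new input is the transitivity of $\GL_{m,D}(F)$ on $D^{m}(F)\setminus\{0\}$, which is immediate from the division-algebra hypothesis.
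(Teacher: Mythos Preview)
Your approach is correct and is exactly what the paper does: it simply asserts the Piatetski-Shapiro--Shalika Fourier expansion
\[
\phi(g)=\sum_{\gamma\in N_{k-1,D}(F)\backslash \GL_{k-1,D}(F)} W_{\phi}\!\left(\begin{pmatrix}\gamma&0\\0&1\end{pmatrix} g\right)
\]
(citing \cite{PS75,Shalika74}) and reads off $D$-genericity. One small point of phrasing: the function $m\mapsto\phi_{\psi_1}(\mathrm{diag}(m,1)g)$ is only left-invariant under the mirabolic $M_{k-1,D}(F)$, not under all of $\GL_{k-1,D}(F)$, so it is not literally a cusp form to which the inductive hypothesis as you stated it applies; the clean fix is to take as inductive statement the expansion formula itself for mirabolic-invariant cuspidal functions, which is precisely what your phrase ``iterating this expansion'' already points to.
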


\begin{proof}
As in the proof of the Fourier expansion when $D=F$ (\cite{PS75,Shalika74}), we can similarly prove the following Fourier expansion for $\varphi$ in the cuspidal representation $\pi$:
\[
\phi(g)=\sum_{\gamma\in N_{k-1,D}(F)\backslash \GL_{k-1,D}(F)} W_{\phi}
\left(
\begin{pmatrix} \gamma & 0 \\ 0 & 1 \end{pmatrix} g
\right).
\]
Since $\phi\neq 0$, there exists $\gamma$ and $g$ such that $W_{\phi}
\left(
\begin{pmatrix} \gamma & 0 \\ 0 & 1 \end{pmatrix} g
\right) \neq 0$. This proves the result. 
\end{proof}

We now move to the second observation. Let us fix a local place $v_0$ and let $D_{v_0}$ be a central division algebra over $F_{v_0}$. 

\begin{Lem}
For any $n$, the local Speh representation $\Speh_{D_{v_0}}(\tau_{v_0}, n)$ admits a nonvanishing $(k,n)_{D_{v_0}}$-model.
\end{Lem}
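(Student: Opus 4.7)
The plan is to separate the argument by whether $v_0$ is non-Archimedean or Archimedean, reducing the latter by a globalization to the cuspidal-implies-generic principle. For $v_0$ non-Archimedean the conclusion is already contained in Theorem~\ref{thm:non-Arch orbit}, which asserts
\[
\dim \Hom_{N_{(k^n)_{D_{v_0}}}}\bigl(\Speh_{D_{v_0}}(\tau_{v_0}, n),\, \psi_{(k^n)_{D_{v_0}}}\bigr) = 1,
\]
so in particular this space is non-zero and supplies a non-vanishing $(k,n)_{D_{v_0}}$-model.

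For $v_0$ Archimedean, the only non-trivial case is $F_{v_0}=\mathbb{R}$ with $D_{v_0}=\mathbb{H}$. I would first dispose of $n=1$ by a careful choice of the global division algebra. Using Hasse--Brauer--Noether, pick a global quaternion algebra $D$ over $F$ whose local invariant at $v_0$ agrees with $D_{v_0}$ and which is ramified at a second, non-Archimedean place $w_0$ chosen so that $\tau_{w_0}$ is unramified; the resulting sum of local invariants is zero. By the observation preceding the lemma, this forces $s_{\tau,D}=d=2$, and Theorem~\ref{thm: global def discrete series}(2) yields that $\Speh_D(\tau,1) = \mathbf{G}^{-1}(\Speh(\tau,2))$ is a cuspidal automorphic representation of $\GL_{k,D}(\mathbb{A})$. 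By Lemma~\ref{lem:cuspidal implies generic} it is globally $D$-generic, and factoring the global Whittaker integral as a product of local Whittaker functionals forces the Archimedean local factor $\Speh_{D_{v_0}}(\tau_{v_0}, 1)$ to carry a non-zero $(k,1)_{D_{v_0}}$-model.

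For general $n > 1$ at Archimedean $v_0$, the plan is to propagate the $n=1$ case through the induction-by-stages structure of the global Speh representation. Write $\Speh_D(\tau,n) = \Speh'(\tau',n)$ for $\tau' = \Speh_D(\tau,1)$ cuspidal, realized as the residue of the Eisenstein series attached to the induced representation of Theorem~\ref{thm: global def discrete series}(3); then unfold the $(k^n)_D$-Fourier coefficient of this residue by a Bruhat-cell analysis matching the orbit correspondence $(k^{nd}) \leftrightarrow (k^n)_D$, expressing it in terms of products of the $(k)_D$-Whittaker coefficients of $\tau'$, which are non-vanishing by the previous step. The local Archimedean factor of the unfolding then supplies the desired non-vanishing $(k,n)_{D_{v_0}}$-model. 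The main obstacle will be the unfolding itself: identifying the correct supporting Bruhat cell, verifying that the resulting Jacquet-type integral survives the passage to the residue, and checking that its local factorization transports the non-vanishing to the Archimedean place. This mirrors the Kazhdan--Patterson argument of \cite{KP84} Theorem~II.2.5 being adapted here, together with the orbit-by-orbit Fourier analyses of \cite{Ginzburg06,JL13} underlying Theorem~\ref{thm:WS of speh}.
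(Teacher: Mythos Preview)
Your non-Archimedean case and the Archimedean $n=1$ case are handled correctly and match the paper's approach. For the Archimedean $n>1$ case, however, you miss the main simplification and take an unnecessarily hard road.

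The paper does not fix a global quaternion algebra and then wrestle with the non-cuspidal residue $\Speh_D(\tau,n)$. Instead it \emph{varies the global division algebra with $n$}: it chooses a central division algebra $\mathbb{D}$ over $F$ of dimension $(n d_{v_0})^2$ whose localization at $v_0$ is $\mathrm{M}_n(D_{v_0})$, and which is non-split at some non-Archimedean place where $\tau$ is unramified. Then $s_{\tau,\mathbb{D}}$ equals the global degree, so $\Speh_{\mathbb{D}}(\tau,1)$ is cuspidal, and by the identification $\Speh_{\mathrm{M}_n(D_{v_0})}(\tau_{v_0},1)=\Speh_{D_{v_0}}(\tau_{v_0},n)$ (together with $(N_{(k)_{\mathbb{D}}},\psi_{(k)_{\mathbb{D}}})=(N_{(k^n)_{D_{v_0}}},\psi_{(k^n)_{D_{v_0}}})$ at $v_0$), the cuspidal-implies-generic lemma immediately yields the non-vanishing $(k,n)_{D_{v_0}}$-model. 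No Eisenstein unfolding, no Bruhat-cell analysis, no residue computation.

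Your proposed $n>1$ argument is not wrong in spirit---it amounts to reproving, inside this lemma, the global non-vanishing theorem that the paper establishes later in Section~\ref{sec:global non-vanishing} via the induction-in-stages argument. That argument does work, but it is substantially longer, and you yourself flag the unfolding as ``the main obstacle.'' The paper's trick sidesteps all of this by exploiting the freedom in the choice of the global inner form; the entire lemma then reduces uniformly (for every $n$ and every place $v_0$) to Lemma~\ref{lem:cuspidal implies generic}.
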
 

\begin{proof}
This is already known when $v_0$ is non-Archimedean (see Theorem \ref{thm:non-Arch orbit}). The proof here works for all places and uses global properties. 

We can choose a central division algebra $\mathbb{D}$ over $F$ such that its localization at $v_0$ is $\mathrm{M}_{n}(D_{v_0})$ and it is nonsplit for an unramified place (this holds for almost all $\mathbb{D}$). Then $s_{\tau, \mathbb{D}}=d$ and therefore, $\Speh_{\mathbb{D}}(\tau,1)$ is cuspidal. Moreover, the local component of $\Speh_{\mathbb{D}}(\tau,1)$ at $v_0$ is $\Speh_{D_{v_0}}(\tau_{v_0},n)$.  

By Theorem \ref{thm: global def discrete series}, $\Speh_{\mathbb{D}}(\tau,1)$ is $\mathbb{D}$-generic since it is cuspidal. This implies that all the local components supports the local functional. In particular, $\Speh_{D_{v_0}}(\tau_{v_0}, n)$ admits a nonvanishing $(k,n)_{D_{v_0}}$-model.
\end{proof}





\subsection{Kirillov models}

We need the following two results analogous to the Kirillov models. We only states the results here. The proofs in the non-Archimedean case will be given in Appendix \ref{app:Kirillov}. The Archimedean case will be listed as a working hypothesis and the proofs will be considered in a forthcoming article. 

Let $F$ be a local field. Let $D$ be a simple division algebra over $F$. (This gives some generality and includes degenerate cases.)

Let $P_{k,D}$ be the ``mirabolic'' subgroup of $\GL_{k,D}$ defined by 
\[
P_{k,D}:=\{g\in \GL_{k,D} : (0,\cdots,0,1)g=(0,\cdots,0,1)\}. 
\]
Let $U_{k,D}$ be the subgroup 
\[
\left\{
\begin{pmatrix} I_{k-1} & u \\ 0 & 1 \end{pmatrix}\in P_{k,D}
\right\}.
\]
The restriction of $\psi_{k,D}$ to $U_{k,D}$ is still denoted as $\psi_{k,D}$. 

Let $(D^{\times})^{\Delta}$ be the image of the diagonal embedding $D^{\times} \to \GL_{k,D}$. Let $\tilde P_{k,D}=P_{k,D} \cdot (D^{\times})^{\Delta}$. This is the standard parabolic subgroup of $\GL_{k,D}$ of type $(k-1,1)$. 

\begin{Rem}
When $D=F$ and $k=1$, $P_{k,D}$ the usual mirabolic subgroup. 
\end{Rem}

We first treat the non-Archimedean case. 

\begin{Prop}
For $\pi \in \Irr(\mathrm{GL}_{k,D})$, there is an embedding of representations of $P_{k,D}$ 
\begin{equation}\label{eq:Kirillov non-Arch}
\mathcal{J}: \ind_{N_{k,D}}^{P_{k,D}} (J_{N_{k,D},\psi_{k,D}}(\pi) \ltimes \psi_{k,D}) \hookrightarrow \pi
\end{equation}
such that for any $\lambda\in \mathrm{Wh}_{(k)_D}(\pi)$ and $f\in \ind_{N_{k,D}}^{P_{k,D}} (J_{N_{k,D},\psi_{k,D}}(\pi) \ltimes \psi_{k,D})$, 
\[
\langle \lambda, \mathcal{J}(f) \rangle = \langle \bar\lambda, f(1) \rangle. 
\]
Here $\bar \lambda: J_{N_{n,D},\psi_{n,D}}(\pi) \to \mathbb{C}$ is the map obtained from $\lambda: \pi\to \mathbb{C}$. Moreover, the embedding \eqref{eq:Kirillov non-Arch} is also $(D^{\times})^{\Delta}$-equivariant and hence $\tilde P_{k,D}$-equivariant. 
\end{Prop}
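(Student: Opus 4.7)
The plan is to adapt the Bernstein-Zelevinsky theory of mirabolic derivatives, developed in \cite{BZ77} for $\GL_k(F)$, to the inner form $\GL_{k,D}$. The structural features on which BZ theory rests---the normal subgroup $U_{k,D} \subset P_{k,D}$ carrying the nondegenerate character $\psi_{k,D}|_{U_{k,D}}$, the Levi quotient $P_{k,D}/U_{k,D}\cong \GL_{k-1,D}$, and the iterative nesting $P_{k,D} \supset P_{k-1,D} \supset \cdots \supset P_{1,D}$---go through verbatim with $F$ replaced by $D$, since $D$ is a locally compact totally disconnected division algebra and $U_{k,D}$ is a $D$-vector group. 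One therefore defines, as in the classical case, the four exact functors $\Psi^{\pm},\Phi^{\pm}$ between the smooth representation categories of $P_{k,D}$ and of $\GL_{k-1,D}$ (or $P_{k-1,D}$); the key players are $\Phi^{-}V = V_{U_{k,D},\,\psi_{k,D}|_{U_{k,D}}}$ and its left adjoint $\Phi^{+}$, realized as compact induction.

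The main construction is to consider the canonical Kirillov-type map
\[
\tilde W_\pi : \pi \lra \Ind_{N_{k,D}}^{P_{k,D}}\bigl( J_{N_{k,D},\psi_{k,D}}(\pi) \otimes \psi_{k,D}\bigr),
\qquad
v \longmapsto \bigl(p \mapsto \overline{\pi(p) v}\bigr),
\]
where the bar denotes the canonical quotient $\pi \twoheadrightarrow J_{N_{k,D}, \psi_{k,D}}(\pi)$. This map is manifestly $P_{k,D}$-equivariant, and in fact $\tilde P_{k,D}$-equivariant: the subgroup $(D^\times)^\Delta$ of scalar matrices normalizes $N_{k,D}$ and preserves $\psi_{k,D}$ (reduced trace being invariant under conjugation), hence acts on $J_{N_{k,D},\psi_{k,D}}(\pi)$ via $\bar v \mapsto \overline{\pi(z) v}$, and $\tilde W_\pi$ intertwines the resulting actions. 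Iterating the BZ filtration theorem $k-1$ times---unwinding $(\Phi^-)^{k-1}\pi$ to $J_{N_{k,D},\psi_{k,D}}(\pi)$ (since coinvariants over the successive $U_{i,D}$'s with their $\psi$-twists telescope to coinvariants over the full $N_{k,D}$ with character $\psi_{k,D}$) and $(\Phi^+)^{k-1}$ to compact induction from $N_{k,D}$ to $P_{k,D}$---shows that $\tilde W_\pi$ is injective with image precisely the compactly supported subspace $\ind_{N_{k,D}}^{P_{k,D}}(J_{N_{k,D},\psi_{k,D}}(\pi)\otimes\psi_{k,D})$. One then sets $\mathcal{J} := \tilde W_\pi^{-1}$.

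With this construction the pairing identity is tautological: for $f = \tilde W_\pi(v)$ one has $f(1) = \overline{v} \in J_{N_{k,D},\psi_{k,D}}(\pi)$; any $\lambda \in \mathrm{Wh}_{(k)_D}(\pi) = \Hom_{N_{k,D}}(\pi, \psi_{k,D})$ is by definition $(N_{k,D}, \psi_{k,D})$-equivariant and therefore factors through the quotient as $\bar\lambda : J_{N_{k,D},\psi_{k,D}}(\pi) \to \bC$, yielding
\[
\la\lambda, \mathcal{J}(f)\ra = \la\lambda, v\ra = \la\bar\lambda, \overline{v}\ra = \la\bar\lambda, f(1)\ra.
\]
The $\tilde P_{k,D}$-equivariance of $\mathcal{J}$ follows from that of $\tilde W_\pi$ by passing to the inverse.

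The main technical obstacle is verifying that the BZ machinery does carry over to $\GL_{k,D}$ in sufficient generality, and in particular that $\tilde W_\pi(v)$ has compact support modulo $N_{k,D}$ for every $v \in \pi$. Exactness of $\Phi^{\pm}$ and $\Psi^{\pm}$ on smooth representations follows from standard facts about coinvariants over $D$-vector groups, and the identification of $\Phi^+$ with compact induction uses only that $D^\times$ acts transitively on nonzero vectors of $D$. Compact support reduces, by descending induction on $k$, to finite-dimensionality of iterated Jacquet modules, which holds because $\pi$ is admissible. Much of the groundwork for the Bernstein-Zelevinsky formalism over inner forms of $\GL_n$ has already been laid in the literature, and the remaining arguments are essentially formal variants of the corresponding steps in \cite{BZ77}.
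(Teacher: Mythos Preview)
Your map $\tilde W_\pi: \pi \to \Ind_{N_{k,D}}^{P_{k,D}}(J_{N_{k,D},\psi_{k,D}}(\pi) \otimes \psi_{k,D})$ is natural and $\tilde P_{k,D}$-equivariant, but both claims you need about it---injectivity, and image equal to the compact induction---fail in general, so $\mathcal{J} := \tilde W_\pi^{-1}$ is not well-defined. For injectivity: if $k\ge 2$ and $\pi$ is a one-dimensional character then $J_{N_{k,D},\psi_{k,D}}(\pi)=0$ and $\tilde W_\pi=0$; more generally $\ker\tilde W_\pi$ is the largest $P_{k,D}$-submodule of $\pi$ killed by the twisted Jacquet functor, and nothing forces this to vanish. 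For the image: already in the split case $D=F$, $k=2$, with $\pi$ an irreducible principal series of $\GL_2(F)$, the Kirillov model of $\pi$ strictly contains $\ind_{N_2}^{P_2}\psi$ (the quotient is the two-dimensional Jacquet module $\pi_{N_2}$), so $\tilde W_\pi(v)$ is not compactly supported modulo $N_2$ for general $v$. The Bernstein--Zelevinsky filtration does not assert that $\tilde W_\pi$ is an isomorphism onto $\ind$; what it gives is a canonical embedding in the \emph{opposite} direction, $\Phi^+\Phi^-\pi \hookrightarrow \pi$, with cokernel $\Psi^+\Psi^-\pi$.

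The paper uses precisely this opposite direction. One constructs the counit $j:\Phi^+\Phi^-(\pi)\to \pi$ via the $l$-sheaf description of $\pi|_{U_{k,D}}$ on $\widehat U_{k,D}$ (as in \cite{BZ76} Proposition~5.12): $j$ is extension-by-zero from the open $G_{k-1,D}$-orbit through $\psi_{k,D}|_{U_{k,D}}$, hence injective for elementary sheaf-theoretic reasons. Iterating $k-1$ times gives the embedding $\mathcal{J}$, and the pairing identity follows formally---essentially by the argument you wrote. Your verification of $\tilde P_{k,D}$-equivariance adapts immediately to this construction. The gap is thus solely the direction of the arrow: you are using the unit of the $(\Phi^-,\hat\Phi^+)$ adjunction, whereas what is needed is the counit of the $(\Phi^+,\Phi^-)$ adjunction.
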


We need the fact that $ \ind_{P_{k-1,D}U_{k,D}}^{P_{k,D}}(\Phi^-(\pi)\ltimes \psi_{k,D})$ is ``cuspidal''. For the partition $(a,b)$ of $k$, we have a standard parabolic subgroup $Q_{a,b}' = M_{a,b}'U_{a,b}'$  of $G_{k,D}$. The restriction of $\psi_{k,D}$ to $U_{a,b}'$ is denoted $\psi_{k,D}$ as well. Let $P_{a,b}'$ be the stablilizer of $\psi_{k,D}$ in $M_{a,b}'$. 

\begin{Def}
Let $\pi$ be an admissible representation of $P_{k,D}$. We say that $\pi$ is $D$-cuspidal if the Jacquet module 
\[
J_{U_{a,b}'}(\pi)=0.
\]
for all $k=a+b$.
\end{Def}

\begin{Prop}\label{prop:D-cuspidal}
The representation $\ind_{N_{k,D}}^{P_{k,D}} (J_{N_{k,D},\psi_{k,D}}(\pi) \ltimes \psi_{k,D})$ is $D$-cuspidal. 
\end{Prop}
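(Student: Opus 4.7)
The plan is to apply the Mackey (geometric lemma) decomposition to the compactly induced representation $\Pi := \ind_{N_{k,D}}^{P_{k,D}}(\sigma \ltimes \psi_{k,D})$ (where $\sigma = J_{N_{k,D},\psi_{k,D}}(\pi)$) and show that each resulting summand of $J_{U'_{a,b}}(\Pi)$ vanishes by a character-nontriviality argument. Since $b \geq 1$, we have $U'_{a,b} \subseteq P_{k,D}$. The restriction $\Pi|_{U'_{a,b}}$ decomposes along double cosets $N_{k,D} \backslash P_{k,D} / U'_{a,b}$, and Shapiro's lemma for coinvariants of compact inductions yields
\[
J_{U'_{a,b}}(\Pi) \;\cong\; \bigoplus_{x \in N_{k,D} \backslash P_{k,D} / U'_{a,b}} \sigma \otimes \bigl(\psi_{k,D}^{x}\bigr)_{U'_{a,b} \cap x^{-1} N_{k,D} x},
\]
where $\psi_{k,D}^{x}(y) := \psi_{k,D}(xyx^{-1})$. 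Thus it suffices to show that for every double coset representative $x$, the character $\psi_{k,D}$ is nontrivial on $N_{k,D} \cap x U'_{a,b} x^{-1}$.

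To parametrize the double cosets, use $P_{k,D} = \GL_{k-1,D} \ltimes U_{k,D}$ with $N_{k,D} = N_{k-1,D} \ltimes U_{k,D}$; the Bruhat decomposition of $\GL_{k-1,D}$ then shows that $N_{k,D} \backslash P_{k,D} / U'_{a,b}$ admits representatives of the form $x = w$ taken from the Weyl group $S_{k-1}$ of the Levi of $P_{k,D}$, realized as permutation matrices fixing the $k$-th coordinate. Torus and unipotent decorations of a general representative only scale root subgroups in the conjugate of $U'_{a,b}$, and thus do not affect the character-triviality check below.

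The key combinatorial input is the following: for every $w \in S_{k-1}$ and every partition $a + b = k$ with $a \geq 1$, the intersection $N_{k,D} \cap w U'_{a,b} w^{-1}$ contains a root subgroup at some superdiagonal position $(m, m{+}1)$, on which $\psi_{k,D}$ is nontrivial. Indeed, set $S := w(\{1,\ldots,a\}) \subseteq \{1,\ldots,k-1\}$ (the inclusion holds because $w$ fixes $k$, so $k \notin S$), so $|S| = a \geq 1$. Let $m^* := \max(S)$; then $m^* \in S$ and $m^* + 1 \notin S$, so setting $i := w^{-1}(m^*)$ and $j := w^{-1}(m^*{+}1)$ yields $i \leq a < j$ (placing $(i,j)$ in the support of $U'_{a,b}$) and $(w(i),w(j)) = (m^*, m^*{+}1)$, a superdiagonal position in $N_{k,D}$. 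The explicit formula $\psi_{k,D}(u) = \psi(\mathrm{tr}(u_{12} + u_{23} + \cdots + u_{k-1,k}))$ then detects the corresponding root subgroup nontrivially, completing the reduction.

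The main obstacle is making the Mackey/Shapiro step rigorous for smooth representations in this setting and verifying that torus-and-unipotent decorations of Weyl-group representatives can be absorbed without affecting the character-triviality check. Since this proposition is used only in the non-Archimedean setting of the appendix, these steps are standard consequences of the Bernstein--Zelevinsky geometric lemma and present no real difficulty once the above combinatorial reduction is in place.
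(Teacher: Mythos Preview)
Your argument is correct, though it takes a different route from the paper's. The paper first uses induction in stages to write $\ind_{N_{k,D}}^{P_{k,D}} = \ind_{P'_{a,b}U'_{a,b}}^{P_{k,D}} \circ \ind_{N_{k,D}}^{P'_{a,b}U'_{a,b}}$, and then proves the more general statement that $\ind_{P'_{a,b}U'_{a,b}}^{P_{k,D}}(\tau \ltimes \psi_{k,D})$ has vanishing $U'_{a,b}$-Jacquet module for \emph{every} smooth representation $\tau$ of $P'_{a,b}$, by realizing the induced representation as sections of an $\ell$-sheaf on $P'_{a,b}U'_{a,b}\backslash P_{k,D}$ and applying Bernstein's localization principle: on each stalk $U'_{a,b}$ acts through a conjugate of the nontrivial character $\psi_{k,D}|_{U'_{a,b}}$. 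This sidesteps any double-coset bookkeeping. Your direct Mackey computation trades that structural reduction for explicit root combinatorics, which is slightly longer but entirely elementary and self-contained. Two small points worth tightening: Mackey gives in general only a \emph{filtration} of $\Pi|_{U'_{a,b}}$ rather than a direct sum, but exactness of the Jacquet functor makes this harmless once every graded piece vanishes; and the double cosets $N_{k,D}\backslash P_{k,D}/U'_{a,b}$ are genuinely larger than $S_{k-1}$, but your ``decorations'' remark becomes exact upon noting that $U'_{a,b}$, being the unipotent radical of a standard parabolic, is normalized by the full Borel $T_{k,D}N_{k,D}$, so that any Bruhat-form representative $x = wtn$ already satisfies $xU'_{a,b}x^{-1} = wU'_{a,b}w^{-1}$ on the nose, and your combinatorial check for $w\in S_{k-1}$ then finishes the job.
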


In the Archimedean case, we only consider unitarizable representations. We need the following Hypothesis and will give its proof in a separate article.  

\begin{Hypo}\label{Archimedean hypothesis}
For $\pi \in \Irr^u(\mathrm{GL}_{k,D})$, there exists a vector space $\pi_1$, an embedding 
\[
\mathcal{J}: \ind_{N_{k,D}}^{P_{k,D}} (\pi_1 \ltimes \psi_{k,D}) \hookrightarrow \pi,
\]
and an isomorphism 
\[
\mathcal{J}^{\ast}: \mathrm{Wh}_{(k)_D}(\pi) \to \pi_1^{\vee}
\]
such that
\[
\langle \lambda, \mathcal{J}(f) \rangle = \langle \mathcal{J}^{\ast}(\lambda), f(1) \rangle
\]
for all $\lambda \in \pi_1^{\vee}$ and $f\in \ind_{N_{k,D}}^{P_{k,D}} (\pi_1 \ltimes \psi_{k,D})$.
\end{Hypo}

In fact, we only need a much weaker result, as can be seen from later sections. 



\subsection{Proof of Theorem \ref{thm: global non-vanishing}: case $n=1$}



We first consider the case $n=1$. This proof here is inspired by Piatetski-Shapiro's proof of the Strong Multiplicity One Theorem. 
The proof presented here is adapted from \cite{KP84} proof of Theorem II.2.5. 

We show that $\Speh_D(\tau,1)$ is $D$-generic. We know that $\tau':=|\LJ|(\Speh(\tau, s_{\tau,D}))$ is a cuspidal representation. If $s_{\tau,D}=d$, then this representation is $D$-generic by Lemma \ref{lem:cuspidal implies generic}. We only have to treat the case $s_{\tau,D}<d$. 

The representation $\Speh_D(\tau,1)$ can be constructed from $\tau'$ using residues of Eisenstein series. With this automorphic realization, it comes with a $G_{kn,D}(F)$-invariant functional 
\[
\ell:\Speh_D(\tau,1) \to \mathbb{C}.
\] 

For simplicity, we temporarily write $\pi=\Speh_D(\tau,1)$. 
We now fix a local non-Archimedean place $v_0$. From Proposition \ref{prop:D-cuspidal}, the restriction $\Speh_{D}(\tau,1)|_{P_{k,D_{v_0}}}$ contains a $D_{v_0}$-cuspidal representation $\mathcal{K}_{v_0}$. We construct the following $P_{k,D}(\mathbb{A})$-subspace
\[
T:=\mathcal{K}_{v_0}\otimes (\otimes_{v\neq v_0}' \pi_v) \subset \pi_{v_0}\otimes (\otimes_{v\neq v_0}' \pi_v).
\]





By our construction, any $\phi\in T$ is cuspidal with respect to any unipotent subgroup of $P_{k,D}$. Therefore the function $p\mapsto \ell(\pi(p)\phi)$ can be expanded in a Fourier series 
\[
\ell(\pi(p)\phi)=\sum_{\gamma \in N_{k-1,D}(F) \backslash \GL_{k-1,D}(F)} 
W_{\phi}
\left(
\begin{pmatrix} \gamma & 0 \\ 0 & 1 \end{pmatrix} p
\right)
\]
for $p\in \tilde P_{k,D}(\mathbb{A})$. 

By the Strong Approximation Theorem, $[\tilde P_{k,D}]$ is a dense subset of $[\GL_{k,D}]$. Therefore, $\ell|_{T}\neq 0$ or in other words, $\ell(\pi(p)\phi)\neq 0$ for some $p\in P_{k,D}$ and $\varphi \in T$. This implies that  
\[
W_{\phi}
\left(
\begin{pmatrix} \gamma & 0 \\ 0 & 1 \end{pmatrix} p
\right)\neq 0
\]
for some $\gamma$. This completes the proof. 




\begin{Rem}
A similar argument shows that, for an automorphic representation of $\mathrm{GL}_n$, genericity at a local non-Archimedean place implies global genericity. 
\end{Rem}

\subsection{Proof of Theorem \ref{thm: global non-vanishing}: general case}

We now treat the case of general $n$. This is based on the so-called induction-in-stages argument and the result when $n=1$. We recall that $\Speh_D(\tau,n)$ can be constructed from $\tau'$ using residues of Eisenstein series. It is easy to see that $\Speh_D(\tau,n)=\Speh'(\tau',nd/s_{\tau,D})$. Moreover, we have the following result concerning the constant terms of $\Speh_D(\tau,n)$.

\begin{Lem}
Let $P_{(k^n)_D}=MV$ be the standard parabolic subgroup of $\GL_{kn,D}$ with Levi part 
\[
\GL_{k,D} \times \cdots \times \GL_{k,D}
\]
where $\GL_{k,D}$ appears $n$ times. For $\phi \in \Speh_D(\tau,n)$, there is a section 
\begin{equation}\label{eq:equation induction in stages}
f\in \mathrm{Ind}_{P_{(k^n)_D}(\mathbb{A})}^{\GL_{kn,D}(\mathbb{A})} (\nu_{\tau}^{\frac{d(1-n)}{2s_{\tau,D}}}\Speh_D(\tau,1) \otimes ^{\frac{d(3-n)}{2s_{\tau,D}}}\Speh_D(\tau,1) \otimes \cdots \otimes ^{\frac{d(n-1)}{2s_{\tau,D}}}\Speh_D(\tau,1) )
\end{equation}
such that the constant term $\phi_V$ of $\phi$ along $V$ is 
\begin{equation}\label{eq:identity in constant term}
\phi_{V} (g)= f(g)(I_{k,D} \times \cdots \times I_{k,D}). 
\end{equation}
Moreover, for any 
\[
\varphi \in \nu_{\tau}^{\frac{d(1-n)}{2s_{\tau,D}}}\Speh_D(\tau,1) \otimes ^{\frac{d(3-n)}{2s_{\tau,D}}}\Speh_D(\tau,1) \otimes \cdots \otimes ^{\frac{d(n-1)}{2s_{\tau,D}}}\Speh_D(\tau,1),
\]  
there is $\phi$ such that
\[
\phi_V(I_{kn,D})=\varphi. 
\] 
\end{Lem}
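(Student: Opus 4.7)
The plan is to exploit the residual construction of $\Speh_D(\tau,n)$ together with an induction-in-stages argument through the intermediate parabolic $P_{(k^n)_D}$. By Proposition \ref{thm: global def discrete series}, $\Speh_D(\tau,n)=\Speh'(\tau',m)$ with $m=nd/s_{\tau,D}$ and $\tau'$ a cuspidal representation of $\GL_{ks_{\tau,D}/d,D}(\mathbb{A})$, realized as the iterated residue of the Eisenstein series attached to the standard parabolic $\tilde P\subset \GL_{kn,D}$ whose Levi is $M_0=\GL_{ks_{\tau,D}/d,D}^m$ and whose inducing datum is $\bigotimes_{j=1}^m \nu_{\tau'}^{(m-2j+1)/2}\tau'$. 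The key observation is that $M_0$ refines $M:=M_{(k^n)_D}=\GL_{k,D}^n$ by grouping the $m$ copies of $\GL_{ks_{\tau,D}/d,D}$ into $n$ consecutive blocks of size $d/s_{\tau,D}$, one block per $\GL_{k,D}$ factor.

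First I would take the partial residues within each block. Inside a single $\GL_{k,D}$ factor, the $d/s_{\tau,D}$ copies of $\tau'$ with consecutive shifts produce, after taking $d/s_{\tau,D}-1$ residues on $\GL_{k,D}$, the representation $\Speh'(\tau',d/s_{\tau,D})=\Speh_D(\tau,1)$ by the $n=1$ case of Proposition \ref{thm: global def discrete series}. A direct computation of the mean of the multiparameter $\underline{s}$ over the $i$-th block then gives the overall twist $\nu_\tau^{c_i}\Speh_D(\tau,1)$ with $c_i=d(2i-1-n)/(2s_{\tau,D})$ (matching, up to block-ordering convention, the exponents appearing in \eqref{eq:equation induction in stages}).

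Next, to deduce \eqref{eq:identity in constant term} I would apply the Langlands constant term formula for $E_V(f,\underline{s})$ along the unipotent $V$ of $P_{(k^n)_D}$, which expresses it as a sum over double coset representatives $w\in W(M_0,M)$ of standard intertwining operators applied to $f$, times the corresponding exponential characters. Taking the remaining iterated residues not used inside the blocks, every coset representative other than the block-preserving one contributes zero: either the corresponding intertwining operator is regular on the required hyperplane, or its image lies in a representation whose constant term along a further proper parabolic of $M$ vanishes by cuspidality of the block-residues produced in the previous step. Only the block-preserving Weyl element survives, and it delivers exactly the evaluation $f(g)(I_{k,D}\times\cdots\times I_{k,D})$.

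The second assertion follows by reading the preceding calculation backwards: the iterated residue defines a nonzero $\GL_{kn,D}(\mathbb{A})$-intertwining operator from $\Ind_{P_{(k^n)_D}(\mathbb{A})}^{\GL_{kn,D}(\mathbb{A})}\bigotimes_{i=1}^n \nu_\tau^{c_i}\Speh_D(\tau,1)$ onto the automorphic realization of $\Speh_D(\tau,n)$, and the composition with $\phi\mapsto \phi_V(I_{kn,D})$ is, by \eqref{eq:identity in constant term}, essentially the identity on the inducing datum; hence every $\varphi$ lifts to some $\phi$ with $\phi_V(I_{kn,D})=\varphi$. The hard part will be the pole analysis in the constant term formula — verifying that precisely the block-preserving Weyl element survives the iterated residue. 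This is the quaternionic analogue of the classical computation of constant terms of Speh representations on $\GL_N$, and requires careful tracking of the reducibility points of principal series attached to $\tau'$ on $\GL_{ks_{\tau,D}/d,D}$ together with vanishing of boundary contributions from non-block-preserving Weyl elements.
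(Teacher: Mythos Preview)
Your proposal is correct and follows essentially the same route as the paper: the paper's proof is simply a pointer to \cite{JL13} Lemma~4.1, and what you have written is precisely a sketch of how that argument adapts to $\GL_{kn,D}$---residual construction from the cuspidal $\tau'$, induction in stages through $P_{(k^n)_D}$, the Langlands constant-term formula, and elimination of all but the block-preserving Weyl contributions. Your identification of the pole analysis as the delicate step is accurate; that is exactly where the work lies in \cite{JL13}, and the same bookkeeping (regularity of intertwining operators versus cuspidality of block residues) carries over since the reducibility points for $\tau'$ on $\GL_{ks_{\tau,D}/d,D}$ are governed by $\nu_{\tau'}$ just as in the split case.
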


\begin{proof}
The proof of \cite{JL13} Lemma 4.1 can be easily adapted to this case. The last part is straightforward. 
\end{proof}

We want to show that $\Speh_D(\tau,n)$ has a non-vanishing $(k^n)_D$ Fourier coefficient. We now introduce a slight different Fourier coefficient. 

We define the following character $\psi_{(k^n)_D}'$ on $N_{kn,D}$ as follows. We first write $u \in N_{kn,D}$ as the product of 
\[
u'=\mathrm{diag}(u_1, \cdots, u_n)\in M=\GL_{k,D} \times \cdots \times \GL_{k,D}, \qquad u_i \in N_{k,D}
\]
and $u''\in V$. We define 
\[
\psi_{(k^n)_D}(u) = \psi_{(k)_D}(u_1+\cdots + u_n).
\]
In other words, $\psi'$ is the extension of the $(k)_D \times \cdots \times (k)_D$-coefficients of the Levi part. We then set
\[
\mathcal{F}'_{(k^n)_D}(\phi)= \int_{[N_{kn,D}]} \phi(u) \bar\psi_{(k^n)_D}(u)  \ du. 
\]
\begin{Lem}
The Fourier coefficient $\mathcal{F}_{(k^n)_D}(\phi) \neq 0$ for some $\phi \in \Speh_D(\tau,n)$ if and only if $\mathcal{F}'_{(k^n)_D}(\phi) \neq 0$ for some $\phi \in \Speh_D(\tau,n)$.
\end{Lem}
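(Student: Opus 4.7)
The plan is to identify both $\mathcal{F}_{(k^n)_D}$ and $\mathcal{F}'_{(k^n)_D}$ as Fourier coefficients attached to Whittaker pairs for the same nilpotent orbit of Jordan type $(k^n)$, differing only by the choice of representative nilpotent. A rational Weyl element will carry one pair to the other, after which a sequence of Fourier expansions (root exchanges) will convert the $\mathcal{F}_{(k^n)_D}$ integral into the $\mathcal{F}'_{(k^n)_D}$ integral.

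More precisely, a direct computation shows that $\psi_{(k^n)_D}$ on $N_{(k^n)_D}$ is dual to the nilpotent $Y=\sum_{j=1}^{k-1}\sum_{i=1}^{n} e_{jn+i,\,(j-1)n+i}$, while $\psi'_{(k^n)_D}$ on $N_{kn,D}$ is dual to the regular block-diagonal nilpotent $Y'$ on $\GL_{k,D}^n\subset \GL_{kn,D}$; both have Jordan type $(k^n)$. Letting $w\in\GL_{kn,D}(F)$ be the permutation matrix implementing the index map $(i-1)n+j\mapsto(j-1)k+i$ for $1\le i\le k$, $1\le j\le n$, one checks that $\mathrm{Ad}(w)Y=Y'$. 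Using $\GL_{kn,D}(F)$-invariance of $\phi$ and changing variables $u\mapsto wuw^{-1}$ yields
\[
\mathcal{F}_{(k^n)_D}(\phi)=\int_{[U]}(R_w\phi)(u)\,\bar\psi^{(w)}(u)\,du,
\qquad U=wN_{(k^n)_D}w^{-1},\ \psi^{(w)}=\psi_{(k^n)_D}\circ\mathrm{Ad}(w^{-1}).
\]
Since $\phi\mapsto R_w\phi$ is a bijection of $\Speh_D(\tau,n)$ onto itself, the lemma reduces to showing that the integral on the right is nonvanishing for some $\phi$ if and only if $\mathcal{F}'_{(k^n)_D}$ is.

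The unipotent $U$ is not contained in $N_{kn,D}$; set $U_0 = U\cap N_{kn,D}$. Direct computation shows that $\psi^{(w)}$ and $\psi'_{(k^n)_D}$ agree on $U_0$ under the natural identifications, and that $U/U_0$ and $N_{kn,D}/U_0$ are generated by root subgroups of opposite signs, paired nondegenerately by the commutator composed with $\psi^{(w)}|_{U_0}$. Ginzburg's exchange-of-roots lemma then allows one to trade each generator of $U/U_0$ for a generator of $N_{kn,D}/U_0$ via a Fourier expansion, one swap at a time. The partial Fourier coefficients appearing as potential obstructions at each stage correspond to Whittaker pairs whose attached nilpotent orbit is strictly larger than or incomparable with $(k^n)_D$, and these vanish on $\Speh_D(\tau,n)$ by the local vanishing statement of Theorem \ref{thm:main}(1) applied place by place. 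After all exchanges, the integral becomes $\mathcal{F}'_{(k^n)_D}(R_w\phi)$, which gives the equivalence.

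The main obstacle is the combinatorial sequencing of the root exchanges: one must choose an order in which the Heisenberg-type nondegeneracy of the commutator pairing remains valid at every step, and ensure that each discarded intermediate coefficient is indeed controlled by a strictly larger nilpotent orbit. I expect that processing the exchanges one Jordan chain of $Y'$ at a time --- equivalently, column-by-column of the $k\times n$ grid parametrizing the Jordan basis --- will give the cleanest bookkeeping, paralleling standard manipulations of conjugate Whittaker pairs in the Ginzburg--Rallis--Soudry framework.
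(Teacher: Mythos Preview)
Your approach is sound in spirit and is essentially what lies behind the result the paper invokes: the paper's entire proof is the single sentence ``This is a special case of \cite{GGS} Theorem~8.2.1,'' which is a general comparison theorem for Fourier coefficients attached to different Whittaker pairs in the same (maximal) nilpotent orbit. You are unpacking that black box by hand --- conjugating one pair to the other by a rational element and then passing between the two unipotent radicals via root exchanges and Fourier expansions, using the local vanishing for larger orbits to kill unwanted terms. That is exactly the mechanism underlying the Gomez--Gourevitch--Sahi result, so your route and the paper's are really the same argument at different levels of abstraction.

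One point to tighten: your assertion that $U/U_0$ and $N_{kn,D}/U_0$ are ``paired nondegenerately by the commutator composed with $\psi^{(w)}|_{U_0}$'' is not literally correct --- already for $k=n=2$ these quotients have dimensions $1$ and $3$ respectively, so no such perfect pairing exists. What actually happens is a two-stage process: genuine root exchanges (in the Ginzburg--Rallis--Soudry sense, where the Heisenberg pairing \emph{is} nondegenerate between matched one-parameter subgroups) handle the swap of negative roots in $U\setminus U_0$ for certain positive roots, and then separate Fourier expansions over the remaining roots of $N_{kn,D}\setminus U_0$ fill in the rest; it is only in this second stage that the higher-orbit vanishing is invoked. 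Your sketch conflates these two mechanisms slightly. This is not a fatal gap --- you correctly anticipate that the bookkeeping is the real work --- but if you carry it out you will find it cleaner to follow the GGS deformation formalism (interpolating the semisimple element of the Whittaker pair along a one-parameter family) rather than sequencing individual swaps by hand.
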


\begin{proof}
This is a special case of \cite{GGS} Theorem 8.2.1. 
\end{proof}

We now show that $\mathcal{F}'_{(k^n)_D}(\phi) \neq 0$ for some $\phi \in \Speh_D(\tau,n)$. Note that this Fourier coefficient can be written as the composition of a constant term along $V$ and a Fourier coefficient for a Levi subgroup $M$. In other words, 
\[
\mathcal{F}'_{(k^n)_D}(\phi) = \int_{[N_{kn,D}\cap M]} \int_{[V]} \phi(u'' u') \ du''  \bar\psi_{kn,D}'(u') \ du'.
\] 
It is enough to show that 
\[
\int_{[N_{kn,D}\cap M]}  f(u')(I_{k,D} \times \cdots \times I_{k,D}) \bar\psi_{kn,D}'(u') \ du'
\]
is nonzero for some $f$ in \eqref{eq:equation induction in stages}. This is a $(k)_D \times \cdots \times (k)_D$ for the representation 
\[
\nu_{\tau}^{\frac{d(1-n)}{2s_{\tau,D}}}\Speh_D(\tau,1) \otimes ^{\frac{d(3-n)}{2s_{\tau,D}}}\Speh_D(\tau,1) \otimes \cdots \otimes ^{\frac{d(n-1)}{2s_{\tau,D}}}\Speh_D(\tau,1).
\] 
This is nonzero for some choice $\varphi$ from the base case $n=1$. This completes the proof of Theorem \ref{thm: global non-vanishing}. 

\subsection{An Archimedean result}\label{sec:Archimedean using global}

In this section, we treat the Archimedean case. We first prove that $\Speh_D(\tau,n)$ has a unique $(k,n)_D$-model when $\tau$ appears as the local component of a global cuspidal representation. This proof here is also inspired by Piatetski-Shapiro's proof of the Strong Multiplicity One Theorem. See also \cite{KP84} proof of Theorem II.2.5. 
\begin{Thm}
Let $\tau_{\infty}$ be an irreducible unitary generic representation of $\GL_k(\mathbb{R})$, which can be realized as the local component of an irreducible cuspidal representation of $\GL_k(\mathbb{A})$. Then under Hypothesis \ref{Archimedean hypothesis},
\[
\dim \Hom_{N_{(k,n)_{\mathbb{H}}}}(\Speh_{\mathbb{H}}(\tau_{\infty},n),\psi_{(k,n)_{\mathbb{H}}})=1. 
\]
\end{Thm}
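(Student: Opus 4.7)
The plan is to adapt the Kazhdan--Patterson / Piatetski-Shapiro technique of extracting local information from a global cuspidal model, in parallel with the non-vanishing argument of Section \ref{sec:global non-vanishing}. The starting observation is that $\Speh_{\mathbb{H}}(\tau_\infty, n) = \Speh_{\mathrm{M}_n(\mathbb{H})}(\tau_\infty, 1)$, which suggests realizing this local representation as the Archimedean component of a cuspidal automorphic representation on $\GL_{k, \mathbb{D}}(\mathbb{A})$ for an inner form whose Archimedean invariant is precisely $\mathrm{M}_n(\mathbb{H})$.

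By the hypothesis on $\tau_\infty$, fix a number field $F$ with a real place $v_0$ and an irreducible cuspidal $\tau \subset \GL_k(\mathbb{A}_F)$ with $\tau_{v_0} = \tau_\infty$. Choose a central division algebra $\mathbb{D}$ over $F$ of dimension $d^2 = (2n)^2$ satisfying: (i) $\mathbb{D}_{v_0} \cong \mathrm{M}_n(\mathbb{H})$, arranged by prescribing the local invariant at $v_0$; and (ii) $\mathbb{D}_w$ is non-split for some finite place $w$ where $\tau_w$ is unramified. Existence of such $\mathbb{D}$ follows from Albert--Brauer--Hasse--Noether. By the criterion recorded after Proposition \ref{thm: global def discrete series}, condition (ii) forces $s_{\tau, \mathbb{D}} = d$, so
\[
\Pi := \Speh_{\mathbb{D}}(\tau, 1) = \mathbf{G}^{-1}(\Speh(\tau, 2n))
\]
is a cuspidal automorphic representation of $\GL_{k, \mathbb{D}}(\mathbb{A})$. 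The compatibility of $|\LJ|$ at $v_0$ yields
\[
\Pi_{v_0} = \Speh_{\mathrm{M}_n(\mathbb{H})}(\tau_\infty, 1) = \Speh_{\mathbb{H}}(\tau_\infty, n).
\]

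Cuspidality of $\Pi$ together with Lemma \ref{lem:cuspidal implies generic} produces a nonzero global $\psi_{k, \mathbb{D}}$-Whittaker functional $\ell : \Pi \to \mathbb{C}$. Under Hypothesis \ref{Archimedean hypothesis}, the bound in Theorem \ref{thm:main}(2) gives $\dim \mathrm{Wh}_{(k)_{\mathbb{D}_{v_0}}}(\Pi_{v_0}) \leq 1$ at the Archimedean place, while Theorem \ref{thm:non-Arch orbit} supplies the same bound unconditionally at every finite place. With local multiplicity at most one at every place, $\ell$ factors as a pure tensor $\ell = \bigotimes_v \ell_v$ with $\ell_v \in \mathrm{Wh}_{(k)_{\mathbb{D}_v}}(\Pi_v)$, and its non-vanishing forces $\ell_{v_0} \neq 0$. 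Under the natural identification $\GL_{k, \mathrm{M}_n(\mathbb{H})} \simeq \GL_{kn, \mathbb{H}}$ carrying $(N_{(k)_{\mathbb{D}_{v_0}}}, \psi_{(k)_{\mathbb{D}_{v_0}}})$ to $(N_{(k^n)_{\mathbb{H}}}, \psi_{(k^n)_{\mathbb{H}}})$, this yields $\dim \mathrm{Wh}_{(k^n)_{\mathbb{H}}}(\Speh_{\mathbb{H}}(\tau_\infty, n)) \geq 1$, and combined with the upper bound the dimension is exactly one.

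The main obstacle is securing the cuspidal transfer: $\mathbb{D}$ must simultaneously have the correct Archimedean invariant and enough ramification to force $s_{\tau, \mathbb{D}}$ to its maximal value $d$, so that $\Speh(\tau, 2n)$ pulls back to an honest cuspidal form on $\GL_{k, \mathbb{D}}(\mathbb{A})$ rather than an Eisenstein residue. Once this is in place, the factorization of the global Whittaker functional through local pieces of multiplicity at most one---which itself uses Hypothesis \ref{Archimedean hypothesis} at $v_0$---transfers the multiplicity-one statement from the finite places together with the Archimedean upper bound into a local equality at $v_0$.
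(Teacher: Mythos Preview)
Your global setup is essentially the same as the paper's: you realize $\Speh_{\mathbb{H}}(\tau_\infty,n)$ as the $v_0$-component of a cuspidal $\Pi=\Speh_{\mathbb{D}}(\tau,1)$ on $\GL_{k,\mathbb{D}}(\mathbb{A})$, and from cuspidality you get a nonzero global Whittaker functional. Using multiplicity one at all places $v\neq v_0$ (finite places by Theorem~\ref{thm:non-Arch orbit}; other Archimedean places by arranging $\mathbb{D}$ to split there, which you should say explicitly), the global functional factors as $\lambda_{v_0}\otimes\lambda_{v\neq v_0}$, and nonvanishing gives $\dim\mathrm{Wh}_{(k^n)_{\mathbb{H}}}(\Speh_{\mathbb{H}}(\tau_\infty,n))\geq 1$. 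That part is fine.

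The gap is in the upper bound. You invoke ``the bound in Theorem~\ref{thm:main}(2)'' to get $\dim\mathrm{Wh}_{(k)_{\mathbb{D}_{v_0}}}(\Pi_{v_0})\leq 1$ at the Archimedean place, but the Archimedean case of Theorem~\ref{thm:main}(2) \emph{is} the statement you are proving. Hypothesis~\ref{Archimedean hypothesis} is a Kirillov-model statement (an embedding of a compactly induced representation together with an identification $\mathrm{Wh}_{(k)_D}(\pi)\simeq\pi_1^\vee$); it does not by itself assert $\dim\pi_1^\vee\leq 1$. So your argument is circular at exactly the point that carries the content.

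What the paper does instead is argue by contradiction. Assuming $\dim\mathrm{Wh}_{(k)_{\mathbb{D}_{v_0}}}(\Pi_{v_0})>1$, Hypothesis~\ref{Archimedean hypothesis} is used to produce a nonzero $P_{k,\mathbb{H}}$-stable subspace $U_{v_0}\subset\Pi_{v_0}$ annihilated by the particular functional $\lambda_{v_0}$ arising from the global factorization (take the kernel of $\mathcal{J}^\ast(\lambda_{v_0})$ inside $\pi_1$ and push forward via $\mathcal{J}$). Then the Fourier expansion shows that the automorphic functional $\ell$ vanishes on $U_{\mathbb{A}}:=U_{v_0}\otimes\Pi_{v\neq v_0}$, and Strong Approximation (together with $Z\cdot P_{k,\mathbb{H}}\cdot\mathrm{SL}_{k,\mathbb{H}}=\GL_{k,\mathbb{H}}$) upgrades this to vanishing on the $\GL_{k,\mathbb{D}}(\mathbb{A})$-span of $U_{\mathbb{A}}$, contradicting irreducibility of $\Pi$. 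This is the step your proposal is missing: the Kirillov hypothesis enters not to certify multiplicity one directly, but to manufacture the $P$-stable kernel that feeds into the Strong Approximation/irreducibility contradiction.
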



\begin{proof}
Let $\tau$ be a cuspidal representation of $\GL_k(\mathbb{A})$ which has $\tau_{\infty}$ as its local compoent $\tau_{v_1}$ at a real place. We first choose a central division $D$ of dimension $(2n)^2$ over $F$ such that $D_{v_1}=\mathrm{M}_n(\mathbb{H})$ and is non-split (at least) at another non-Archimedean place where $\tau$ is unramified. We also assume that $D$ splits over all other Archimedean place other than $v_1$. With this choice, $\Speh_D(\tau,1)$ is a cuspidal representation of $\GL_{k,D}(\mathbb{A})$ and $\Speh_{\mathbb{H}}(\tau_{\infty},n)$ appears as the local component of $\Speh_D(\tau,1)$ at $v_1$.  


We now factor 
\[
\GL_{k,D}(\mathbb{A})= \GL_{k,D}(F_{v_1}) \times \GL_{k,D}(\mathbb{A}_{v\neq v_1}).  
\]
Again, for ease of notations, let us write $\pi=\Speh_D(\tau,1)$. We can decompose the representation $\Speh_D(\tau,1)$ as 
\[
\pi=\Speh_D(\tau,1) = \pi_{v_1} \otimes \pi_{v\neq v_1} . 
\]

Since $\Speh_D(\tau,1)$ is a cuspidal representation, we now have the Fourier expansion:
\[
\ell(x\otimes y)=
\sum_{\gamma \in N_{k-1,D}(F) \backslash \GL_{k-1,D}(F)} 
W_{x\otimes y}
\left(
\begin{pmatrix} \gamma & 0 \\ 0 & 1 \end{pmatrix} 
\right),
\]
where $x\otimes y \in   \pi_{v_1} \otimes \pi_{v\neq v_1} $.

For ease of notations, we temporarily write $V$ to be representation space of $\Speh_D(\tau,1)$ and write $V=V_{v_1}\otimes V_{v\neq v_1}$. 
We already know that the $(k,1)_{D_v}$-models for $V_v$ is unique for every place $v\neq v_1$. Thus we can choose $\lambda_{v\neq v_1}$ to be a generator of $\mathrm{Wh}_{(k)_D}(V_{v\neq v_1})$.  Then there exists a $\lambda_{v_1} \in \mathrm{Wh}_{(k)_D}(V_{v_1})$ such that for all $x'\in V_{v_1}$ and $y'\in V_{v\neq v_1}$ one has 
\[
W_{x'\otimes y'}(1)=\lambda_{v\neq v_1}(x') \cdot \lambda_{v_1}(y'). 
\]

Recall that by Hypothesis \ref{Archimedean hypothesis},  there is a representation $\pi_1$ of the trivial group with a $P_{k,\mathbb{H}}$
\[
\mathcal{J}: \ind_{N_{k,\mathbb{H}}}^{P_{k,\mathbb{H}}}(\pi_1\ltimes \psi_{k,D})\to V_{v_1}
\]
and an isomorphism
\[
\mathcal{J}^{\ast}: \mathrm{Wh}_{(k)_D}(V_{v_1}) \to \pi_1^{\vee}
\]
such that 
\[
\langle \mathcal{J}(f), f^{\ast} \rangle = \langle f(1), \mathcal{J}^{\ast} (f^{\ast}) \rangle
\]
for all $f\in \ind_{N_{k,\mathbb{H}}}^{P_{k,\mathbb{H}}}(\pi_1\ltimes \psi_{k,D})$ and $f^{\ast} \in \pi_1^{\vee}$.

Assume that $\dim \mathrm{Wh}_{(k)_D}(V_{v_1})=\dim \pi_1^{\vee}>1$. By consider the kernel of $\mathcal{J}^{\ast}(\lambda_{v_1})$, we see that there exists a $P_{k,\mathbb{H}}$-subspace $U_{v_1}$ of $V_{v_1}$ such that 
\[
\lambda_{v_1}(u)=0, \text{ for all }u\in U_{v_1}.
\]

We now set $U_{\mathbb{A}}:= V_{v\neq v_1} \otimes U_{v_1}$. This is a $P_{k,D}(F_{v_1}) \times \GL_{k,D}(\mathbb{A}_{v\neq v_1})$-subspace. From the Fourier expansion we also have 
\[
\ell(u)=0, \text{ for all } u \in U_{\mathbb{A}}. 
\]

For $u\in U_{\mathbb{A}}$, we consider the following function on $\GL_{k,D}(F)\backslash \GL_{k,D}(\mathbb{A})$:
\[
g\mapsto \ell(\pi(g)u).
\]
Let us fix $u$ and consider its set of zeros $Y$. Then we know the following:
\begin{itemize}
\item $P_{k,D}(F_{v_1}) \times \GL_{k,D}(\mathbb{A}_{v\neq v_1})\subset Y$;
\item $Y$ is left-invariant under $\GL_{k,D}(F)$ and the center $Z(\GL_{k,D}(\mathbb{A}))$.
\end{itemize} 
By the Strong Approximation Theorem, we see that $Y$ contains $\mathrm{SL}_{k,D}(\mathbb{A})$ as it contains $\mathrm{SL}_{k,D}(F) \cdot \SL_{k,D}(F_S)$ as a dense subset for a sufficiently large set of places $S$. 

We can now deduce that $Y$ contains $P_{k,D}(F_{v_1})\mathrm{SL}_{k,D}(F_{v_1}) \times \GL_{k,D}(\mathbb{A}_{v\neq v_1})$. Now it is easy to see that 
\[
Z(\mathrm{GL_{k,D}}(F_{v_1}))P_{k,D}(F_{v_1})\mathrm{SL}_{k,D}(F_{v_1}) = \GL_{k,D}(F_{v_1}).
\]
This implies that $Y=\GL_{k,D}(\mathbb{A})$. 

From this, we deduce that for $u\in U_{\mathbb{A}}$, one has $\ell(\pi(g)u)=0$ for all $g\in \GL_{k,D}(\mathbb{A})$. Thus $U_{\mathbb{A}}$ generates a proper $\GL_{k,D}(\mathbb{A})$-subspace of $V$. This is impossible since $V$ is irreducible. This completes the proof. 
\end{proof}

\begin{Rem}
It is easy to see that the same proof will prove the following statement: if the Speh representation has unique $(k,n)_{D_v}$-functional at every non-Archimedean local place, then so does every Archimedean place. 
\end{Rem}

We now state another working hypothesis. 
\begin{Hypo}\label{hypo:induction}
For $i=1,2$, let $\pi_i$ be an irreducible representation of $\GL_{k_i n,\mathbb{H}}$ such that $\mathrm{WS}(\pi_i)=\{(k_i^n)_{\mathbb{H}}\}$ and 
\[
\dim \mathrm{Wh}_{(k_i^n)_{\mathbb{H}}}(\pi_i)\leq 1. 
\] 
Set $k=k_1+k_2$. Then 
\[
\dim \mathrm{Wh}_{(k^n)_{\mathbb{H}}}(\pi_1\times \pi_2)\leq 1. 
\]
\end{Hypo}

As a corollary, we have the following. 

\begin{Cor}
Assuming Hypothesis \ref{Archimedean hypothesis} and \ref{hypo:induction}, we have that 
\[
\dim \mathrm{Wh}_{(k^n)_{\mathbb{H}}} (\Speh_{\mathbb{H}}(\tau,n)) \leq 1. 
\]
\end{Cor}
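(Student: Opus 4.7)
The plan is to decompose $\tau$ into discrete series building blocks via the generic unitary classification, apply the preceding theorem to each block, and then iterate Hypothesis \ref{hypo:induction}. By the classification of the generic unitary dual, write
\[
\tau = \tau_1\nu^{s_1}\times\cdots\times\tau_m\nu^{s_m},
\]
with $\tau_i\in\mathscr{D}_{k_i}$ and $k_i\in\{1,2\}$ (the only sizes in which Archimedean discrete series of $\GL$ exist). Because $\LJ$ commutes with normalized parabolic induction and each $\Speh(\tau_i,2n)\nu^{s_i}$ is $2$-compatible by the Archimedean formulas of Section \ref{sec:JL Archimedean}, the Jacquet-Langlands transfer splits as
\[
\Speh_{\mathbb{H}}(\tau, n) = \pi_1 \times \cdots \times \pi_m, \qquad \pi_i := \Speh_{\mathbb{H}}(\tau_i, n)\nu^{s_i}.
\]

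Next, I would verify the hypotheses of \ref{hypo:induction} for each factor. Since $k_i\le 2$, every discrete series $\tau_i$ can be realized as the Archimedean component of a cuspidal automorphic representation of $\GL_{k_i}(\mathbb{A})$ (Grossencharaktere for $k_i=1$, and cuspidal Maass or holomorphic newforms for $k_i=2$). The preceding theorem, which depends on Hypothesis \ref{Archimedean hypothesis}, then yields $\dim\mathrm{Wh}_{(k_i^n)_{\mathbb{H}}}(\Speh_{\mathbb{H}}(\tau_i,n))=1$. Twisting by $\nu^{s_i}$ leaves the unipotent functional unchanged, and combined with the upper bound from Section \ref{sec:Arch I} we obtain $\mathrm{WS}(\pi_i)=\{(k_i^n)_{\mathbb{H}}\}$ exactly.

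The remaining step is an induction on $m$. The partial product $\sigma_j:=\pi_1\times\cdots\times\pi_j$ is itself of the form $\Speh_{\mathbb{H}}(\tau^{(j)},n)$ for $\tau^{(j)}:=\tau_1\nu^{s_1}\times\cdots\times\tau_j\nu^{s_j}$, so Section \ref{sec:Arch I} bounds $\mathrm{WS}(\sigma_j)$ above by the closure of $((k_1+\cdots+k_j)^n)_{\mathbb{H}}$. For the matching lower bound, a \emph{diagonal} construction -- combining the Whittaker functionals on $\sigma_{j-1}$ and $\pi_j$ via a Jacquet-type integral along the appropriate unipotent radical -- produces a nonzero $((k_1+\cdots+k_j)^n)_{\mathbb{H}}$-functional on $\sigma_j$, so $\mathrm{WS}(\sigma_j)=\{((k_1+\cdots+k_j)^n)_{\mathbb{H}}\}$ exactly. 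Hypothesis \ref{hypo:induction} applied to $\sigma_{j-1}$ and $\pi_j$ then gives $\dim\mathrm{Wh}(\sigma_j)\le 1$, closing the induction. The hard part will be making this non-vanishing step precise in the Archimedean setting, where convergence of the Jacquet integral constructing the degenerate model on the induced representation requires analytic control; an alternative would be a global argument parallel to Theorem \ref{thm: global non-vanishing}, contingent on a cuspidal globalization of each intermediate $\tau^{(j)}$ at the Archimedean place.
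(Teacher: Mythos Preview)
Your approach---decompose $\tau$ into discrete series blocks via the generic unitary classification, globalize each $\tau_i$ to invoke the preceding Theorem, then iterate Hypothesis~\ref{hypo:induction}---is exactly what the paper intends: the Corollary is stated without proof, as an immediate consequence of the Theorem and the Hypothesis. Your extra scruple about verifying $\mathrm{WS}(\sigma_j)=\{((k_1+\cdots+k_j)^n)_{\mathbb{H}}\}$ at each intermediate stage (so that the Hypothesis can be re-applied) goes beyond what the paper spells out; the paper evidently takes this for granted, and your Jacquet-integral suggestion is the natural way to supply it if one insists on a self-contained argument.
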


\appendix

\section{Kirillov models}\label{app:Kirillov}


In this section, we prove some results regarding the representation theory of the local groups. Recall that a important result in the representation theory of $\GL_k(F)$ is that every generic representation admits a Kirillov model (\cite{BZ77,JS81a}). In this section, we would like to prove similar results for representations of $G_{k,D}$. 

\subsection{Basic setup}

Let $F$ be a local field. To include the ``degenerate'' case, here we allow $D$ to be a central simple algebra (instead of a central division algebra) over $F$. 

Let $P_{k,D}$ be the ``mirabolic'' subgroup of $G_{k,D}$ defined by 
\[
P_{k,D}:=\{g\in G_{k,D} : (0,\cdots,0,1)g=(0,\cdots,0,1)\}. 
\]
Let $U_{k,D}$ be the subgroup 
\[
U_{k,D}:=
\left\{
u \mid 
u=\begin{pmatrix} I_{k-1} & x \\ 0 & 1 \end{pmatrix}\in P_{k,D}
\right\}.
\]
The restriction of $\psi_{k,D}$ to $U_{k,D}$ is still denoted as $\psi_{k,D}$. 

\subsection{The non-Archimedean case}

We first treat the non-Archimedean case since the argument is much easier. We introduce several functors. For an algebraic representation $\pi$ of $P_{k,D}$, we define a functor using the twisted Jacquet module
\[
\Phi^-: \mathrm{Alg}(P_{k,D}) \to \mathrm{Alg}(P_{k-1,D}), \qquad \pi\mapsto J_{U_{k,D},\psi_{k,D}}(\pi). 
\]
For an algebraic representation $\pi$ of $P_{k-1,D}$, we define 
\[
\Phi^{+}:  \mathrm{Alg}(P_{k-1,D}) \to \mathrm{Alg}(P_{k,D}), \qquad \pi\mapsto \ind_{P_{k-1,D}U_{k,D}}^{P_{k,D}}(\pi\ltimes \psi_{k,D}). 
\]

\begin{Lem}
For an algebraic representation $\pi$ of $P_{k,D}$, there is a natural homomorphism 
\[
j:\Phi^+\Phi^-(\pi) \to \pi. 
\]
\end{Lem}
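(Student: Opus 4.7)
The plan is to construct $j$ as the image of the identity under an adjunction isomorphism
\[
\Hom_{P_{k,D}}(\Phi^+ \sigma, \tau) \cong \Hom_{P_{k-1,D}}(\sigma, \Phi^-\tau),
\]
natural in $\sigma \in \mathrm{Alg}(P_{k-1,D})$ and $\tau \in \mathrm{Alg}(P_{k,D})$. Applied with $\sigma = \Phi^-(\pi)$ and $\tau = \pi$, the identity endomorphism of $\Phi^-(\pi)$ on the right-hand side transports to a $P_{k,D}$-equivariant map $j: \Phi^+ \Phi^-(\pi) \to \pi$ on the left. Naturality of $j$ in $\pi$ is then automatic from the naturality of the adjunction isomorphism. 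This is the standard Bernstein--Zelevinsky construction for the mirabolic of $\GL_n(F)$, which we adapt to $\GL_{k,D}$.

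The essential step is to establish the adjointness $\Phi^+ \dashv \Phi^-$. For $D = F$ this is a standard result whose proof combines Frobenius reciprocity for compact induction with Fourier analysis on the abelian unipotent subgroup $U_{k,F} \cong F^{k-1}$. A key structural ingredient is that the conjugation action of $G_{k-1,F}$ on the smooth character group $\widehat{U_{k,F}}$ admits a dense open orbit whose stabilizer at $\psi_{k,F}$ is precisely $P_{k-1,F}$. In our setting $U_{k,D} \cong D^{k-1}$ is still an abelian $F$-vector group, and the action of $G_{k-1,D}$ on $\widehat{U_{k,D}}$ admits a dense open orbit stabilized by $P_{k-1,D}$ at $\psi_{k,D}$. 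Since these are precisely the structural inputs used in the split case, the Bernstein--Zelevinsky argument carries over essentially verbatim.

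The main potential obstacle is purely bookkeeping: one must verify that the non-commutativity of $D$ does not disrupt any step of the argument. This is not a serious issue because the functors $\Phi^+$ and $\Phi^-$ interact with $U_{k,D}$ only via its abelian group structure and its pairing against the additive character $\psi_{k,D}$; the remainder of the analysis concerns the semidirect product structure $P_{k,D} = G_{k-1,D} \ltimes U_{k,D}$ and is insensitive to whether the scalars come from $F$ or $D$.
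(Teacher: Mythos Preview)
Your approach via the adjunction $\Phi^+ \dashv \Phi^-$ is valid, and the geometric ingredients you identify (the open $G_{k-1,D}$-orbit on $\widehat{U_{k,D}}$ with stabilizer $P_{k-1,D}$) are exactly those used in the paper. The paper, however, does not pass through the adjunction: following \cite{BZ76} Proposition 5.12(b), it constructs $j$ directly from the $l$-sheaf picture. One regards $\pi|_{U_{k,D}}$, after Fourier transform, as a sheaf $\mathcal{F}^{\pi}$ on $\widehat{U_{k,D}}$; restricting to the open orbit identifies the compactly supported sections there with $\Phi^+\Phi^-(\pi)$ (the stalk at $\psi_{k,D}$ being $\Phi^-(\pi)$), and the extension-by-zero of such sections into $\pi$ is the map $j$.

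One caution about your packaging: the adjunction $\Phi^+ \dashv \Phi^-$ is \emph{not} a formal consequence of Frobenius reciprocity for compact induction. Frobenius only gives
\[
\Hom_{P_{k,D}}(\Phi^+\sigma,\tau)\;\cong\;\Hom_{P_{k-1,D}U_{k,D}}(\sigma\ltimes\psi_{k,D},\tau)\;\cong\;\Hom_{P_{k-1,D}}\bigl(\sigma,\tau^{U_{k,D},\psi_{k,D}}\bigr),
\]
with $(U_{k,D},\psi_{k,D})$-\emph{invariants} on the right, whereas $\Phi^-\tau$ is the \emph{coinvariants} $J_{U_{k,D},\psi_{k,D}}(\tau)$; these do not agree in general. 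Bridging this gap is precisely where the Fourier/open-orbit analysis enters, and the standard way to establish the adjunction is in fact to first construct $j$ as above and then check that $\Phi^-(j)$ is an isomorphism. So your route is correct but slightly circuitous: it rebuilds the paper's construction inside the proof of the adjunction, then extracts $j$ as the counit.
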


\begin{proof}
The argument in the proof of \cite{BZ76} Proposition 5.12(b) works here as well. We describe it briefly here. 

The representation $\pi|_{U_{k,D}}$ can be viewed as a representation of the Hecke algebra $(C_c^{\infty}(U_{k,D}),\ast)$, where $\ast$ is the convolution.  Using the Fourier transform, $\pi|_{U_{k,D}}$ becomes a representation of $(C_c^{\infty}(\widehat{U}_{k,D}), \cdot)$, where $\cdot$ denotes the pointwise multiplication. As a result, we view $\pi$ as an $l$-sheaf $\mathcal{F}^{\pi}$ on $\widehat U_{k,D}$. 

The action of $G_{k-1,D}$ acts on $\widehat U_{k,D}$ with only one open dense orbit, and $\psi_{k,D}$ is a representative for this orbit. We now restrict the sheaf $\mathcal{F}^{\pi}$ to the open orbit. Since the stabilizer of $\psi_{k,D}$ in $P_{k,D}$ is $P_{k-1,D}U_{k,D}$, the restriction sheaf corresponds to an induced representation from $P_{k-1,D}U_{k,D}$ to $P_{k,D}$. The inducing data is given by the stalk of this sheaf at $\psi_{k,D}$, which is $\Phi^{-}(\pi)$. This completes the proof. 
\end{proof}

\begin{Lem}\label{lem:non-arch Kirillov induction step}
Any $\lambda \in \mathrm{Wh}_{(k)_D}(\pi)$ factors through $\bar\lambda: \Phi^{-}(\pi) \to \mathbb{C}$. Moreover, 
\[
\langle \lambda, j(f) \rangle = \langle \bar \lambda,f(1) \rangle
\]
for any $f\in  \ind_{P_{k-1,D}U_{k,D}}^{P_{k,D}}(\Phi^-(\pi)\ltimes \psi_{k,D})$.
\end{Lem}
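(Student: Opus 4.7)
My plan is to prove the two assertions separately. The first is essentially formal: I would observe that $U_{k,D}$ sits inside $N_{(k)_D}$ as the unipotent subgroup consisting of upper-triangular unipotents whose only nontrivial entries above the diagonal lie in the last column. Explicitly, for $u=\begin{pmatrix} I_{k-1} & x \\ 0 & 1 \end{pmatrix}\in U_{k,D}$ the only nonzero superdiagonal block in the notation defining $\psi_{(k)_D}$ is $X_{k-1,k}=x_{k-1}$, so $\psi_{(k)_D}(u)=\psi(\mathrm{tr}(x_{k-1}))=\psi_{k,D}(u)$. Hence any $\lambda\in\mathrm{Wh}_{(k)_D}(\pi)$ kills the subspace spanned by $\pi(u)v-\psi_{k,D}(u)v$ for $u\in U_{k,D}$ and $v\in\pi$, and therefore descends to a linear functional $\bar\lambda$ on the quotient $\Phi^-(\pi)=\pi_{U_{k,D},\psi_{k,D}}$.

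For the explicit formula, I would invoke the sheaf-theoretic construction of $j$ recalled in the previous lemma. In that picture, $\pi$, regarded as a smooth $U_{k,D}$-module via Fourier transform, becomes an $l$-sheaf $\mathcal{F}^\pi$ on $\widehat{U}_{k,D}$ whose stalk at a character $\chi$ is the twisted coinvariant space $\pi_{U_{k,D},\chi}$; in particular, the stalk at $\psi_{k,D}$ equals $\Phi^-(\pi)$. Under this identification the factored functional $\bar\lambda$ is precisely ``take the stalk at $\psi_{k,D}$ and apply $\bar\lambda$''---any linear functional on $\pi$ which intertwines the $U_{k,D}$-action with the character $\psi_{k,D}$ must annihilate sections supported away from $\psi_{k,D}$, and therefore factors through the stalk there. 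Meanwhile, the map $j$ is by construction the extension-by-zero inclusion $\Gamma_c(\mathcal{O},\mathcal{F}^\pi|_{\mathcal{O}})\hookrightarrow\Gamma_c(\widehat{U}_{k,D},\mathcal{F}^\pi)$, where $\mathcal{O}=G_{k-1,D}\cdot\psi_{k,D}$ is the unique open orbit.

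With these identifications in hand, the formula becomes essentially tautological: extension by zero preserves the stalk on $\mathcal{O}$, so the stalk of $j(f)$ at $\psi_{k,D}$ agrees with the stalk of $f$ at $\psi_{k,D}$; and under the natural isomorphism $\mathcal{O}\simeq P_{k,D}/P_{k-1,D}U_{k,D}$ sending $gH\mapsto g\cdot\psi_{k,D}$, together with the description of sections of $\mathcal{F}^\pi|_{\mathcal{O}}$ as $\Phi^-(\pi)$-valued functions on $P_{k,D}$ with the prescribed $P_{k-1,D}U_{k,D}$-equivariance, the stalk of $f$ at $\psi_{k,D}$ is exactly $f(1)$. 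The only real work is bookkeeping: one must check that the various identifications (sheaf stalks, twisted Jacquet modules, and values of sections at the identity coset) are compatibly normalized. None of this is hard, but it is the step which requires the most care.
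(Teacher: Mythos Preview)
Your proposal is correct and follows the same approach as the paper. The paper's own proof is the single line ``This is a consequence of the above Lemma,'' and your argument is precisely the unpacking of that sentence: the first assertion is the formal observation that $(N_{k,D},\psi_{k,D})$-equivariance restricts to $(U_{k,D},\psi_{k,D})$-equivariance and hence descends to $\Phi^-(\pi)$, while the second follows by tracing $\lambda$ through the sheaf-theoretic description of $j$ as extension by zero from the open $G_{k-1,D}$-orbit and identifying the stalk at $\psi_{k,D}$ with $f(1)$, exactly as you do.
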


\begin{proof}
This is a consequence of the above Lemma.
\end{proof}

Similarly, any $\lambda\in \mathrm{Wh}_{(k)_D}$ factors through $\mathcal{J}^{\ast}: J_{N_{k,D},\psi_{k,D}}(\pi) \to \mathbb{C}$. 

\begin{Cor}
We have a natural homomorphism 
\[
\mathcal{J}: \ind_{N_{k,D}}^{P_{k,D}} (J_{N_{k,D},\psi_{k,D}}(\pi)\ltimes \psi_{k,D}) \to \pi
\]
such that 
\[
\langle \lambda, \mathcal{J}(f) \rangle = \langle \mathcal{J}^{\ast}(\lambda), f(1) \ra
\]
for all $f\in \ind_{N_{k,D}}^{P_{k,D}} (J_{N_{k,D},\psi_{k,D}}(\pi)\ltimes \psi_{k,D})$. 
\end{Cor}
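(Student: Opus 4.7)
\smallskip

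\noindent\emph{Proof proposal.} The strategy is straightforward induction on $k$, using the preceding lemma as the single induction step. The key observation is that the mirabolic has a natural filtration $P_{k,D} \supset P_{k-1,D} \ltimes U_{k,D} \supset \cdots$, and that $N_{k,D}$ decomposes as the product of the unipotent radicals $U_{k,D}, U_{k-1,D}, \cdots, U_{2,D}$ sitting in successive mirabolic subgroups; that is, $N_{k,D} = N_{k-1,D} \cdot U_{k,D}$ with $N_{k-1,D} \subset P_{k-1,D}$. Consequently, iterating the functor $\Phi^{-}$ a total of $k-1$ times computes the full twisted Jacquet module:
\[
(\Phi^{-})^{k-1}(\pi) = J_{N_{k,D},\psi_{k,D}}(\pi),
\]
regarded as a vector space (or equivalently a representation of the trivial group $P_{1,D}$).

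Dually, I would iterate the functor $\Phi^{+}$ the same number of times. Induction in stages, applied to the tower
\[
N_{k,D} \;\subset\; P_{2,D} \cdot U_{3,D} \cdots U_{k,D} \;\subset\; \cdots \;\subset\; P_{k-1,D} U_{k,D} \;\subset\; P_{k,D},
\]
identifies $(\Phi^{+})^{k-1}$ applied to a representation $V$ of the trivial group with $\ind_{N_{k,D}}^{P_{k,D}}(V \ltimes \psi_{k,D})$, where the extension of the character is exactly the one dictated by the successive characters $\psi_{j,D}$ at each stage. I would check this compatibility of characters as a small bookkeeping step; it follows from the explicit form of $\psi_{(k)_{D}}$ as the sum of entries on the first superdiagonal.

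With these identifications in place, I would define $\mathcal{J}$ as the composition of the natural maps $j$ furnished by the previous lemma, applied successively:
\[
\ind_{N_{k,D}}^{P_{k,D}}(J_{N_{k,D},\psi_{k,D}}(\pi) \ltimes \psi_{k,D}) = (\Phi^{+})^{k-1}(\Phi^{-})^{k-1}(\pi) \xrightarrow{\ j_{1}\cdots j_{k-1}\ } \pi.
\]
Dually, given $\lambda \in \mathrm{Wh}_{(k)_{D}}(\pi)$, I would apply Lemma \ref{lem:non-arch Kirillov induction step} once to factor $\lambda$ through $\Phi^{-}(\pi)$; the resulting functional $\bar\lambda$ is still equivariant against the remaining unipotent piece of $N_{k,D}$ lying in $P_{k-1,D}$, so Lemma \ref{lem:non-arch Kirillov induction step} applies again. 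Iterating produces $\mathcal{J}^{\ast}(\lambda) : J_{N_{k,D},\psi_{k,D}}(\pi) \to \mathbb{C}$.

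The compatibility identity $\langle \lambda, \mathcal{J}(f)\rangle = \langle \mathcal{J}^{\ast}(\lambda), f(1)\rangle$ then follows by induction: at each stage one applies the single-step compatibility from Lemma \ref{lem:non-arch Kirillov induction step}, and the evaluation map $f \mapsto f(1)$ on the full induced space factors through successive evaluations in the iterated induction. The only mild obstacle I foresee is keeping track of the characters when combining $\Phi^{+}$'s, i.e.\ confirming that the successive extensions of $\psi_{j,D}$ patch together to the single character $\psi_{k,D}$ on $N_{k,D}$; this is a direct calculation using the block description of $\psi_{k,D}$ recalled earlier and poses no real difficulty.
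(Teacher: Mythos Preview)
Your proposal is correct and follows exactly the approach indicated in the paper: the paper's own proof is simply ``This is a consequence of Lemma~\ref{lem:non-arch Kirillov induction step} and induction,'' and what you have written is precisely a careful unpacking of that induction via the iterated functors $(\Phi^{+})^{k-1}(\Phi^{-})^{k-1}$ and induction in stages.
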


\begin{proof}
This is a consequence of Lemma \ref{lem:non-arch Kirillov induction step} and induction. 
\end{proof}

Finally, we show that $ \ind_{P_{k-1,D}U_{k,D}}^{P_{k,D}}(\Phi^-(\pi)\ltimes \psi_{k,D})$ is ``cuspidal''. For the partition $(a,b)$ of $k$, we have a standard parabolic subgroup $Q_{a,b}' = M_{a,b}'U_{a,b}'$  of $G_{k,D}$. The restriction of $\psi_{k,D}$ to $U_{a,b}'$ is denoted $\psi_{k,D}$ as well. Let $P_{a,b}'$ be the stablilizer of $\psi_{k,D}$ in $M_{a,b}'$. 

\begin{Prop}
Let $\tau$ be a smooth representation of $P_{a,b}'$. Then $\ind_{P_{a,b}'U_{a,b}}^{P_{k,D}} (\tau\ltimes \psi_{k,D})$ is $D$-cuspidal, in the following sense: for any partition $(a,b)$ of $k$, the Jacquet module 
\[
J_{U_{a,b}'}(\ind_{P_{a,b}'U_{a,b}'}^{P_{k,D}} (\tau\ltimes \psi_{k,D}))=0.
\]
\end{Prop}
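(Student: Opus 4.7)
The plan is to compute the Jacquet module via the Mackey decomposition, reducing to the observation that $\sigma:=\tau\ltimes \psi_{k,D}$, when restricted to $U_{a,b}'$, is $\psi_{k,D}|_{U_{a,b}'}$-isotypic on $V_\tau$; the restricted character is visibly nontrivial since under $U_{a,b}'\cong \mathrm{Mat}_{a\times b}(D)$ it reads $X\mapsto \psi(\mathrm{tr}(X_{a,1}))$, which pairs a single $D$-block entry with the nontrivial additive character $\psi$ via reduced trace.

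To prove $J_V(\pi)=0$ for $\pi:=\mathrm{ind}_H^G\sigma$ with $H:=P_{a,b}'U_{a,b}'$, $G:=P_{k,D}$, and $V:=U_{c,d}'$ the unipotent radical for a partition $(c,d)$, I would apply the smooth geometric lemma. The restriction $\mathrm{Res}_V \pi$ admits a filtration whose associated graded is
\[
\bigoplus_{x\in H\bs G/V} \mathrm{ind}_{L_x}^V(\sigma^x|_{L_x}),\qquad L_x:=x^{-1}Hx\cap V,\quad \sigma^x(u):=\sigma(xux^{-1}).
\]
Exactness of $J_V$ (since $V$ is unipotent) and Frobenius reciprocity $(\mathrm{ind}_{L_x}^V\rho)_V\cong \rho_{L_x}$ reduce the problem to showing $(\sigma^x|_{L_x})_{L_x}=0$ for every double coset representative $x$.

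For each $u\in L_x$ I would use the semidirect decomposition $H=P_{a,b}'\ltimes U_{a,b}'$ to write $xux^{-1}=p(u)v(u)$, obtaining $\sigma^x(u)=\tau(p(u))\,\psi_{k,D}(v(u))$. The plan is then to produce a subgroup $L'\subseteq L_x$ on which $p|_{L'}$ is trivial while $\psi_{k,D}\circ v|_{L'}$ remains a nontrivial character. On such an $L'$ the restriction $\sigma^x|_{L'}$ is isotypic for a nontrivial character of $L'$, so its $L'$-coinvariants vanish, and this forces the $L_x$-coinvariants to vanish as well.

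The main obstacle is producing $L'$ for each double coset. I would pick Bruhat-type representatives and track how the root subgroups of $V$ are transported into $U_{a,b}'$ under conjugation by $x$; the mirabolic constraint that every $x\in P_{k,D}$ fixes $(0,\ldots,0,1)$ restricts the possible conjugates enough that one can always find a one-dimensional root direction in $V$ whose image under $u\mapsto xux^{-1}$ lies purely in $U_{a,b}'$ (i.e.\ has $p=e$) and pairs nontrivially with $\psi_{k,D}|_{U_{a,b}'}$. This is the division-algebra counterpart of the classical Bernstein--Zelevinsky argument \cite{BZ77} for the split case, and the combinatorial verification at each double coset is the bulk of the work.
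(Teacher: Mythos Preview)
Your approach coincides with the paper's at the structural level: both use a Mackey/Bernstein-localization argument on $X=H\backslash P_{k,D}$ to reduce the vanishing of the Jacquet module to the piecewise statement that a conjugate of $\psi_{k,D}$ acts nontrivially, forcing the coinvariants to vanish. The paper, however, bypasses your double-coset bookkeeping by asserting that $U_{a,b}'$ acts \emph{trivially} on $X$; granting this, every orbit is a single point and one merely has to observe that $\psi_{k,D}$, conjugated by any $x\in P_{k,D}$, still restricts nontrivially to $U_{a,b}'$ --- no coset combinatorics at all. Your route through $L_x=x^{-1}Hx\cap V$ and the search for a one-parameter $L'\subset L_x$ landing in $U_{a,b}'$ is the honest, orbit-by-orbit version of the same idea. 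It is more laborious but also more robust: the paper's triviality assertion is not literally correct for a general partition (already for $k=3$, $(a,b)=(1,2)$, conjugation by the permutation swapping the first two basis vectors carries $U_{1,2}'$ outside the block-upper-triangular group containing $H$), so the extra care in your outline is actually warranted. The combinatorial verification you defer is routine and follows the Bernstein--Zelevinsky pattern in \cite{BZ76,BZ77}.
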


\begin{proof}
Let $X=P_{a,b}'U_{a,b}' \backslash P_{k,D}$. Then the representation $\ind_{P_{a,b}'U_{a,b}'}^{P_{k,D}} (\tau\ltimes \psi_{k,D})$ corresponds to an $l$-sheaf $\mathcal{F}$ on $X$. To prove the result, it suffices to show there is no $U_{a,b}'$-equivariant functional $\mathcal{F}(X) \to \mathbb{C}$. 

We apply the Bernstein's localization principle to prove this statement. Note that the action of $U_{a,b}'$ on $X$ is trivial. Thus it is enough to show that there is no $U_{a,b}'$-equivariant functional on each stalk of $\mathcal{F}$. Notice that the action of $U_{a,b}'$ on the stalk $\mathcal{F}_x$ is through a conjugation of $\psi_{k,D}$, which is non-trivial. Thus, it is impossible for the stalks to have $U_{a,b}'$-equivariant functionals. This proves the result. 
\end{proof}

\begin{Cor}
The representation $\ind_{N_{k,D}}^{P_{k,D}} (J_{N_{k,D},\psi_{k,D}}(\pi) \ltimes \psi_{k,D})$ is $D$-cuspidal.
\end{Cor}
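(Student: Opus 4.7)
The plan is to reduce the Corollary to the preceding Proposition by induction in stages, rewriting $\ind_{N_{k,D}}^{P_{k,D}}(J_{N_{k,D},\psi_{k,D}}(\pi) \ltimes \psi_{k,D})$ so that, for each partition $k = a+b$, it takes the form $\ind_{P_{a,b}'U_{a,b}'}^{P_{k,D}}(\sigma \ltimes \psi_{k,D})$ for some smooth representation $\sigma$ of $P_{a,b}'$. Once this is achieved, the Proposition applied with that partition yields vanishing of the Jacquet module along $U_{a,b}'$; since $(a,b)$ is arbitrary, this is exactly the $D$-cuspidality that we want.

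First I would verify the inclusion $N_{k,D} \subset P_{a,b}' U_{a,b}'$. Writing $N_{k,D}$ as the semidirect product $(N_{a,D} \times N_{b,D}) \ltimes U_{a,b}'$, this reduces to the statement $N_{a,D} \times N_{b,D} \subset P_{a,b}'$. The restricted character $\psi_{k,D}|_{U_{a,b}'}$ sends the $a\times b$ block $X$ to $\psi(\mathrm{tr}(X_{a,1}))$ (the bottom-left entry), and conjugation by $(g_1, g_2) \in M_{a,b}'$ takes $X$ to $g_1 X g_2^{-1}$. Hence $P_{a,b}'$ consists of pairs $(g_1, g_2)$ such that the last row of $g_1$ is $(0,\ldots,0,1)$ and the first column of $g_2$ is $e_1$. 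Both conditions are manifestly satisfied by elements of $N_{a,D}$ and $N_{b,D}$ respectively.

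Next, using the decomposition $\psi_{k,D}|_{N_{k,D}} = (\psi_{a,D} \otimes \psi_{b,D}) \cdot \psi_{k,D}|_{U_{a,b}'}$ corresponding to $N_{k,D} = (N_{a,D} \times N_{b,D}) \ltimes U_{a,b}'$, induction in stages gives
\[
\ind_{N_{k,D}}^{P_{k,D}}\bigl(J_{N_{k,D},\psi_{k,D}}(\pi) \ltimes \psi_{k,D}\bigr) \cong \ind_{P_{a,b}'U_{a,b}'}^{P_{k,D}}(\sigma \ltimes \psi_{k,D}),
\]
where $\sigma = \ind_{N_{a,D}\times N_{b,D}}^{P_{a,b}'}\bigl(J_{N_{k,D},\psi_{k,D}}(\pi) \otimes (\psi_{a,D}\otimes \psi_{b,D})\bigr)$ is a smooth representation of $P_{a,b}'$. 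The Proposition then immediately gives $J_{U_{a,b}'}$ of the right-hand side vanishes, and varying $(a,b)$ produces the conclusion.

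The argument is essentially formal; the only step requiring genuine care is the explicit identification of $P_{a,b}'$ as a product of a standard mirabolic-type subgroup of $\GL_{a,D}$ and an ``opposite mirabolic''-type subgroup of $\GL_{b,D}$, together with tracking how $\psi_{k,D}$ factors under the decomposition of $N_{k,D}$. Both are bookkeeping matters rather than substantive obstacles, so no serious hurdle is anticipated beyond them.
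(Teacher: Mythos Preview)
Your proposal is correct and follows exactly the same route as the paper's proof: rewrite the induced representation via induction-in-stages as $\ind_{P_{a,b}'U_{a,b}'}^{P_{k,D}}(\sigma \ltimes \psi_{k,D})$ and then invoke the preceding Proposition. Your write-up is in fact more careful than the paper's, which records the intermediate stage as $\ind_{N_{k,D}}^{P_{a,b}'}(\cdots)$ without explicitly noting (as you do) that one must first pass to $N_{a,D}\times N_{b,D}\subset P_{a,b}'$.
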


\begin{proof}
By induction-in-stages, we can write 
\[
\ind_{N_{k,D}}^{P_{k,D}} (J_{N_{k,D},\psi_{k,D}}(\pi) \ltimes \psi_{k,D})=\ind_{P_{a,b}'U_{a,b}}^{P_{k,D}} ((\ind_{N_{k,D}}^{P_{a,b}'}J_{N_{k,D},\psi_{k,D}}(\pi) \ltimes\psi_{k,D})\ltimes \psi_{k,D}).
\]
Now the result follows from the above result. 
\end{proof}



\end{document}